\newcommand{\rrVert}{\Vert}
\newcommand{\rrvert}{\vert}
\newcommand{\llVert}{\Vert}
\newcommand{\llvert}{\vert}
\newcommand{\underset}[2]{\,\mathop{#2}\limits_{#1}\,}
\newtheorem{theorem}{Theorem}[section]
\newtheorem{prop}[theorem]{Proposition}
\newtheorem{lemma}[theorem]{Lemma}
\newtheorem{corollary}[theorem]{Corollary}
\def\BB{\mathcal B}
\def\HH{\mathcal H}
\def\R{{\mathbb R}}
\def\T{{\mathbb T}}
\def\P{{\mathbb P}}
\def\E{{\mathbb E}}
\def\G{{\mathbb G}}
\def\N{{\mathbb N}}
\begin{document}
\begin{frontmatter}

\title{Deviation inequalities, moderate deviations
and some limit theorems for bifurcating Markov chains with application}
\runtitle{Deviation inequalities and limit theorems for BMC}

\begin{aug}
\author[A]{\fnms{S. Val\`ere} \snm{Bitseki Penda}\corref{}\ead[label=e1]{Valere.Bitsekipenda@math.univ-bpclermont.fr}},
\author[A]{\fnms{Hac\`ene} \snm{Djellout}\ead[label=e2]{Hacene.Djellout@math.univ-bpclermont.fr}}
\and\break
\author[B]{\fnms{Arnaud} \snm{Guillin}\ead[label=e3]{Arnaud.Guillin@math.univ-bpclermont.fr}}
\runauthor{S. V. Bitseki Penda, H. Djellout and A. Guillin}
\affiliation{Universit\'e Blaise Pascal}
\address[A]{S. V. Bitseki Penda\\
H. Djellout\\
Laboratoire de Math\'ematiques\\
Universit\'e Blaise Pascal\\
CNRS UMR 6620\\
24 avenue des Landais\\
BP 80026, 63177 Aubi\`ere\\
France\\
E-mail:\\
\hphantom{\quad}\printead*{e1}\\
\hphantom{\quad}\printead*{e2}} 
\address[B]{A. Guillin\\
Institut Universitaire de France\\
\quad et Laboratoire de Math\'ematiques\\
Universit\'e Blaise Pascal\\
CNRS UMR 6620\\
24 avenue des Landais\\
BP 80026, 63177 Aubi\`ere\\
France\\
E-mail:\\
\hphantom{\quad}\printead*{e3}}
\end{aug}

\received{\smonth{11} \syear{2011}}
\revised{\smonth{1} \syear{2013}}

%
\begin{abstract}
First, under a geometric ergodicity assumption, we provide some
limit theorems and some probability inequalities for the bifurcating
Markov chains (BMC). The BMC model was introduced by Guyon to detect
cellular aging from cell lineage, and our aim is thus to complete
his asymptotic results. The deviation inequalities are then applied
to derive first result on the moderate deviation principle (MDP) for
a functional of the BMC with a restricted range of speed, but with a
function which can be unbounded. Next, under a uniform geometric
ergodicity assumption, we provide deviation inequalities for the BMC
and apply them to derive a second result on the MDP for a bounded
functional of the BMC with a larger range of speed. As statistical
applications, we provide superexponential convergence in probability
and deviation inequalities (for either the Gaussian setting or the
bounded setting), and the MDP for least square estimators of the
parameters of a first-order bifurcating autoregressive process.
\end{abstract}

%
\begin{keyword}[class=AMS]
\kwd[Primary ]{60F05}
\kwd{60F10}
\kwd{60F15}
\kwd{60E15}
\kwd[; secondary ]{60G42}
\kwd{60J05}
\kwd{62M02}
\kwd{62M05}
\kwd{62P10}
\end{keyword}
\begin{keyword}
\kwd{Bifurcating Markov chains}
\kwd{limit theorems}
\kwd{ergodicity}
\kwd{deviation inequalities}
\kwd{moderate deviation}
\kwd{martingale}
\kwd{first-order bifurcating autoregressive process}
\kwd{cellular aging}
\end{keyword}

\end{frontmatter}

\section{Introduction}

Bifurcating Markov chains (BMC) are an adaptation of (usual) Markov
chains to the data of a regular binary tree; see below for a more
precise definition. In other terms, it is a Markov chain for which the
index set is a regular binary tree. They are appropriate, for example,
in the modeling of cell lineage data when each cell in one generation
gives birth to two offspring in the next. Recently, they have received
a great deal of attention because of the experiments of biologists on
aging of Escherichia Coli; see \cite{SteMadPauTad,GuAl}. E. Coli
is a rod-shaped bacterium which reproduces by dividing in the middle,
thus producing two cells, one which already existed, that we call old
pole progeny, and the other which is new, that we call new pole
progeny. The aim of their experiments was to look for evidence of aging
in E. Coli. In this section, we will introduce the model that allowed
the authors of \cite{GuAl} to study the aging of E. Coli and we refer
to their works for further motivations and insights on the data leading
to the model studied here. This model is a typical example of
bifurcating Markovian dynamics, and it has been the motivation for the
rigorous mathematical study of BMC in \cite{Guyon}. This also motivates
Sections \ref{momentscontrol} and \ref{expoprobaineq} in the sequel,
where we give a rigorous asymptotic (and nonasymptotic) study of BMC
under geometric ergodicity and uniform geometric ergodicity
assumptions.

%
\begin{figure}[b]

\includegraphics{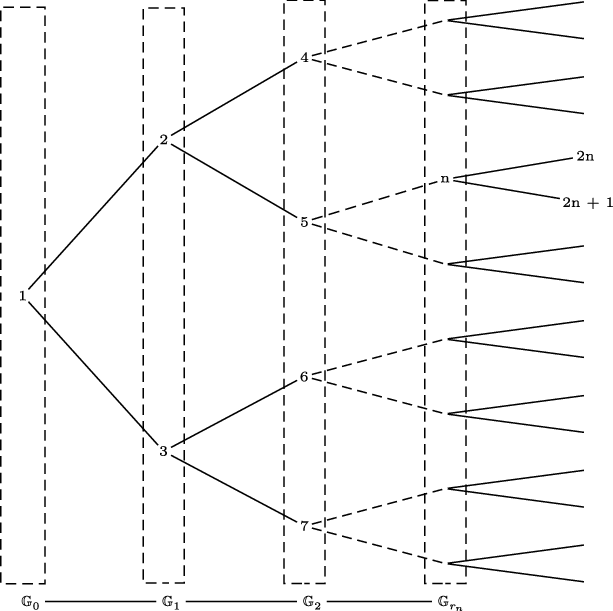}

\caption{The binary tree $\mathbb{T}$.}\label{Fiarbrebinaire}
\end{figure}
%

\subsection{The model}\label{model}

Let $\mathbb{T}$ be a binary regular tree in which each vertex is
seen as a positive integer different from 0; see Figure
\ref{Fiarbrebinaire}. For $r\in\mathbb{N}$, let
\[
\mathbb{G}_{r}= \bigl\{2^{r},2^{r}+1,\ldots,2^{r+1}-1 \bigr\},\qquad \mathbb{T}_{r}=\bigcup
_{q=0}^{r}\mathbb{G}_{q},
\]
which denote, respectively, the $r$th column and the first $(r+1)$
columns of the tree.
Then, the cardinality $|\mathbb{G}_{r}|$ of $\mathbb{G}_{r}$ is
$2^{r}$ and that of $\mathbb{T}_{r}$ is
$|\mathbb{T}_{r}|=2^{r+1}-1$. A column of a given integer $n$ is
$\mathbb{G}_{r_{n}}$ with $r_{n}=\lfloor\log_{2}n\rfloor$, where
$\lfloor x\rfloor$ denotes the integer part of the real number $x$.

The genealogy of the cells is described by this tree. In the sequel
we will thus see $\mathbb{T}$ as a given population. Then the vertex
$n$, the column $\mathbb{G}_{r}$ and the first $(r+1)$ columns
$\mathbb{T}_{r}$ designate, respectively, individual $n$, the $r$th
generation and the first $(r+1)$ generations. The initial individual
is denoted~$1$.

Guyon et al. \cite{GuAl,Guyon} proposed the following
linear Gaussian model to describe the evolution of the growth rate
of the population of cells derived from an initial individual:
%
\begin{equation}
\label{bar11} \mathcal{L}(X_{1})=\nu\quad \mbox{and}\quad \forall
n\geq1\qquad
\cases{\displaystyle X_{2n}=\alpha_{0}X_{n} +
\beta_{0} + \varepsilon_{2n},
\cr
\displaystyle
X_{2n+1} =
\alpha_{1}X_{n} + \beta_{1} +
\varepsilon_{2n+1},}
\end{equation}
where $X_{n}$ is the growth rate of individual $n$, $n$ is the
mother of $2n$ (the new pole progeny cell) and $2n+1$ (the old pole
progeny cell), $\nu$ is a distribution probability on $\mathbb{R}$,
$\alpha_{0}, \alpha_{1}\in(-1,1)$; $\beta_{0}, \beta_{1}\in
\mathbb{R}$ and $ ((\varepsilon_{2n}, \varepsilon_{2n+1}), n\geq
1 )$ forms a sequence of i.i.d. bivariate random variables with
law $\mathcal{N}_{2}(0,\Gamma)$, where
\[
\Gamma=\sigma^{2}\pmatrix{ 1 & \rho
\cr
\rho& 1},\qquad \sigma^{2}>0,\qquad
\rho\in(-1,1).
\]
The processes $(X_{n})$ defined by (\ref{bar11}) are typical
examples of BMC which are called the first-order bifurcating
autoregressive processes [BAR(1)]. The BAR(1) processes are an
adaptation of autoregressive processes, when the data have a binary
tree structure. They were first introduced by Cowan and Staudte
\cite{CS86} for cell lineage data where each individual in one
generation gives rise to two offspring in the next generation. We
will not discuss here extensions to $m$-ary tree, which follow more or
less from the same method, or Markov chains on Galton--Watson trees
that are left for an other study.

In \cite{Guyon}, Guyon, after establishing the first results on the
theory of BMC, proves laws of large
numbers and central\vspace*{2pt} limit theorem for the least-square estimators
$\hat{\theta}^{r}=(\hat{\alpha}_{0}^{r}, \hat{\beta}_{0}^{r},
\hat{\alpha}_{1}^{r}, \hat{\beta}_{1}^{r})$ of the 4-dimensional
parameter $\theta=(\alpha_{0},\beta_{0},\break \alpha_{1},\beta_{1})$; see
Section \ref{Application} for a more precise definition. He also
gives some statistical tests which allow to check if the model is
symmetric or not (roughly $\alpha_0=\alpha_1$ or not), and if the new
pole and the old pole populations
are even distinct in mean, which allows him to conclude a
statistical evidence in aging in E. Coli. Let us also mention
\cite{BerSapGeg}, where Bercu et al., using the martingale approach,
give asymptotic analysis of the least squares estimators of the
unknown parameters of a general asymmetric $p$th-order BAR
processes.

In this paper, we will give moderate deviation principle (MDP) for
this estimator and the statistical tests done by Guyon. We will also
give deviation inequalities for $\hat{\theta}^{r}-\theta$, which are
important for a rigorous (nonasymptotic) statistical study. This
will be done in two cases: the Gaussian case as described above and
the case where the noise and the initial state $X_{1}$ are assumed
to take values in a compact set. Note that the latter case implies
that the BAR(1) process defined by (\ref{bar11}) valued in compact
set.

We are now going to give a rigorous definition of BMC. We refer to
\cite{Guyon} for more detail.

\subsection{Definitions}

For an individual $n\in\mathbb{T}$, we are interested in the
quantity~$X_{n}$ (it may be the weight, the growth rate$,\ldots$) with
values in the metric space $S$ endowed with its Borel $\sigma$-field
$\mathcal{S}$.
%
\begin{definition}[($\mathbb{T}$-transition probability, see \cite{Guyon})]
We call $\mathbb{T}$-transition probability any mapping $P\dvtx  S\times
\mathcal{S}^{2}\rightarrow[0,1]$ such that:
\begin{itemize}
\item$P(\cdot,A)$ is measurable for all $A\in\mathcal{S}^{2}$;

\item$P(x,\cdot)$ is a probability measure on $(S^{2},\mathcal{S}^{2})$
for all $x\in S$.
\end{itemize}
\end{definition}
For a $\mathbb{T}$-transition probability $P$ on $S\times
\mathcal{S}^{2}$, we denote by $P_{0}$, $P_{1}$ and $Q$, respectively,
the first and the second marginal of $P$, and the mean of $P_{0}$
and $P_{1}$, that is, $P_{0}(x,B)=P(x,B\times S)$,
$P_{1}(x,B)=P(x,S\times B)$ for all\vspace*{1pt} $x\in S$ and $B\in\mathcal{S}$
and $ Q=\frac{P_{0}+P_{1}}{2}$.

For $p\geq1$, we denote by $\mathcal{B}(S^{p})$ [resp.,
$\mathcal{B}_{b}(S^{p})$], the set of all
$\mathcal{S}^{p}$-measurable (resp., $\mathcal{S}^{p}$-measurable and
bounded) mappings $f\dvtx
S^{p}\rightarrow\mathbb{R}$. For $f\in\mathcal{B}(S^{3})$, we
denote by $Pf \in\mathcal{B}(S)$ the function
\[
x\mapsto Pf(x)=\int_{S^{2}}f(x,y,z)P(x,dy,dz)\qquad\mbox{when it is
defined}.
\]

\begin{definition}[(Bifurcating Markov chains; see \cite{Guyon})]
Let $(X_{n}, n\in\mathbb{T})$ be a family of $S$-valued random
variables defined on a filtered probability space $(\Omega,
\mathcal{F}, (\mathcal{F}_{r}, r\in\mathbb{N}), \mathbb{P})$. Let
$\nu$ be a probability on $(S, \mathcal{S})$ and $P$ be a
$\mathbb{T}$-transition probability. We say that $(X_{n}, n\in
\mathbb{T})$ is a $(\mathcal{F}_{r})$-bifurcating Markov chain with
initial distribution $\nu$ and $\mathbb{T}$-transition probability
$P$ if:
\begin{itemize}
\item$X_{n}$ is $\mathcal{F}_{r_{n}}$-measurable for all $n\in
\mathbb{T}$;
\item$\mathcal{L}(X_{1})=\nu$;
\item for all $r\in\mathbb{N}$ and for all family $(f_{n}, n\in
\mathbb{G}_{r})\subseteq\mathcal{B}_{b}(S^{3})$
\[
\mathbb{E} \biggl[\prod_{n\in
\mathbb{G}_{r}}f_{n}(X_{n},X_{2n},X_{2n+1})
\Big/ \mathcal{F}_{r} \biggr]=\prod_{n\in
\mathbb{G}_{r}}Pf_{n}(X_{n}).
\]
\end{itemize}
\end{definition}

In the following, when unspecified, the filtration implicitly used
will be ${\mathcal F}_r=\sigma(X_i, i\in{\mathbb T}_r)$. We denote
by $(Y_{r},r\in\mathbb{N})$ the Markov chain on $S$ with
$Y_{0}=X_{1}$ and transition probability $Q$. The chain $(Y_{r},
r\in\mathbb{N})$ corresponds to a random lineage taken in the
population.

We denote by $\mathfrak{G}$ the set of all permutations of
$\mathbb{N}^{\ast}$ that leaves each $\mathbb{G}_{r}$ invariant. We
draw a permutation $\Pi$ uniformly on $\mathfrak{G}$, independently
of $X=(X_{n},n\in\mathbb{T})$. Drawing $\Pi$ ``uniformly'' on
$\mathfrak{G}$ means drawing the restriction of $\Pi$ on $\G_{r}$
uniformly among the $(2^{r})!$ permutations of $\G_{r}$. In
particular,
$ (\Pi(2^{r}),\Pi(2^{r}+1),\ldots,\Pi(2^{r+1}-1) )$ can be
viewed as a random drawing of all the elements of $\G_{r}$ without
replacement. Notice that $\Pi$ allows one to define a random order on
$\mathbb{T}$ which preserves the genealogical order. For example,
$ (\Pi(i), 1\leq i\leq n )$ denotes the set of the ``first''
$n$ individuals of $\T$. $\Pi$ was introduced by Guyon in order to
sample over the ``first'' $n$ individuals. As mentioned in
\cite{Guyon}, this choice of $\Pi$ allows one to preserve the same
asymptotic behavior for the empirical means resulting from the
sampling over (say) the $r$th generation, the first $(r+1)$
generations or the ``first'' $n$ individuals. In general, the choice
of another permutation does not preserve the asymptotic behavior of
these empirical means. We refer to \cite{Guyon}, Section 2.2, for
more detail.

Throughout the paper, we will denote by:
\begin{itemize}
\item$f\otimes g$ the mapping $(x,y)\mapsto f(x)g(y)$.
\item$Q^p$ the $p$th iterated of $Q$ recursively defined by the
formulas $Q^0(x,\cdot)=\delta_x$ and $Q^{p+1}(x,B)=\int
_{S}Q(s,dy)Q^p(y,B)$ for all $B\in\mathcal{S}$; $Q^p$ is a transition
probability in $(S,\mathcal{S})$.
\item$\nu Q$ the distribution on $(S,\mathcal{S})$ defined by $\nu Q
(B)=\int_{S}\nu(dx)Q(x,B)$; $\nu Q^p$ is the law of $Y_p$.
\item$(Qf)(x)=\int_{S}f(y)Q(x,dy)$ when it is defined.
\item$(\nu f)$ or $(\nu,f)$ the integral $\int_{S}f \,d\nu$ when it
is defined.
\end{itemize}
For all $i\in\mathbb{T}$, we set $\Delta_{i}=(X_{i},X_{2i},X_{2i+1})$.
We introduce the following empirical quantities:
%
\begin{equation}
\label{empiricalmean2} \cases{\displaystyle  \overline{M}_{\mathbb{G}_{r}}(f)=\frac
{1}{|\mathbb{G}_{r}|}
\sum_{i\in
\mathbb{G}_{r}}f(\tilde{\Delta}_{i}),
\cr
\displaystyle
\overline{M}_{\mathbb{T}_{r}}(f)=\frac{1}{|\mathbb{T}_{r}|}\sum
_{i\in
\mathbb{T}_{r}}f(\tilde{\Delta}_{i}),
\cr
\displaystyle
\overline{M}{}^{\Pi}_{n}(f)=\frac{1}{n}\sum
_{i=1}^{n}f(\tilde{\Delta}_{\Pi(i)}),}
\end{equation}
where $f(\tilde{\Delta}_{i})=f(\Delta
_{i})=f(X_{i},X_{2i},X_{2i+1}) $ if $f\in
\mathcal{B}(S^{3})$ and $f(\tilde{\Delta}_{i})=f(X_{i}) $ if
$f\in\mathcal{B}(S)$.

Guyon in \cite{Guyon} studied limit theorems of the empirical means
(\ref{empiricalmean2}), namely the law of large numbers ($L^2$ and
almost sure versions) and the central limit theorems for
(\ref{empiricalmean2}) when $f\in\mathcal{B}(S^{3})$, but centered
by the conditional expectation rather than by the limit mean. An
extension of the BMC has been proposed in \cite{DelMar}, in which
the authors studied a model of BMC with missing data. To take into
account the possibility for a cell to die, the authors of
\cite{DelMar} use Galton--Watson tree instead of a regular tree. And
they give a weak law of large numbers, an invariance principle and
the central limit result for the average over one generation or up
to one generation. As previously mentioned, this setting will be
considered in incoming works. One can also mention the work of De
Saporta et al. \cite{DesaporGegMar} dealing with bifurcating
autoregressive processes with missing data in the estimation
procedure of the parameters of the asymmetric BAR process. They use
a two type Galton--Watson process to model the genealogy and give
convergence and asymptotic normality of their estimators. It is
important to remark that the nonasymptotic study of deviation
inequalities has not been considered at all in these works, despite
their practical interest.


\subsection{Objectives}

Our objectives in this paper are:
\begin{itemize}
\item to give some limit theorems for BMC that
complete those done in \cite{Guyon} (LLN, LIL$,\ldots$);

\item to give probability inequalities and deviation inequalities for
the empirical means (\ref{empiricalmean2}), that is, for $f \in\BB
(S)$ and all $x>0$
\[
\P\bigl(\overline{M}_{\mathbb{T}_r}(f)-(\mu,f)\ge x \bigr)\le
e^{-C(x,r)},
\]
where $C(x,r)$ will crucially depend on our set of assumptions on
$f$ and on the ergodic property of $Q$ but valid for (nearly) all
$r$;

\item to study
moderate deviation principle (MDP) for BMC, that is, for some range of
speed $\sqrt{r}\ll b_r\ll r$ (depending on assumptions) and for
$f\in\BB_b(S^3)$ with $Pf=0$
\[
\frac{b^2_{|{\mathbb{T}_r}|}}{|\mathbb{T}_r|} \log\P\biggl(\frac
1{b_{|\mathbb{T}_r|}}M_{\mathbb{T}_r}(f)
\ge x \biggr)\sim- \frac{x^2}{2\sigma^2};
\]

\item to obtain the MDP and deviation inequalities for the estimator of
bifurcating autoregressive
process, which are important for a rigorous statistical study.
\end{itemize}
All these results will be obtained under hypothesis of geometric
ergodicity or
uniform geometric ergodicity, meaning that $Q^r$ converges (uniformly)
exponentially fast to a limiting measure.

The limit theorems, proved in this paper, include strong law of large
numbers for the empirical average $\overline{M}{}^{\Pi}_{n}(f)$ with
$f\in\mathcal{B}(S)$ (this case is not studied in \cite{Guyon}),
the law of the iterated logarithm and the almost sure functional
central limit theorem. A strong law of large numbers will be obtained via
control of 4th order moments. We thus generalize the computation of
2nd order moments made by Guyon in \cite{Guyon}. It will be noted
that the technique we will use can be applied to compute the other
higher-order moments, but at the price of huge and tedious
computations.

Deviation inequalities will be obtained in the setting of unbounded
functions, by using the classical Markov inequality and under
geometric ergodicity assumption. The results are, however, at this
point quite restrictive.

Exponential deviation inequalities will be shown for bounded functions
and under a uniform geometric ergodicity assumption. Their proof
intensively uses the Azuma--Bennett--Hoeffding inequality
\cite{Azuma,Bennett,Hoeffding}, which requires bounded random
variables. Extension to unbounded functions and weaker ergodicity
assumptions will be done in a further work, using transportation
inequalities in the spirit of~\cite{DGW04}.

The MDP will be mainly deduced from these inequalities and general
results on moderate deviations of martingales; see \cite{Djellout},
recalled in the Appendix B. Their speed will depend on whether uniform
geometric ergodicity or only geometric ergodicity is satisfied.

Before presenting the plan of our paper, let us recall the
definition of a moderate deviation principle (MDP): let $(b_n)_{n\ge
0}$ be a positive sequence such that
\[
\frac{b_n}{n}\underset{n\rightarrow\infty} {\longrightarrow} 0
\quad\mbox{and}\quad
\frac{b_n^2}{n}\underset{n\rightarrow\infty} {\longrightarrow}\infty.
\]
We say that a sequence of centered random variables $(M_{n})_{n}$ with
topological state space $(S,{\mathcal S})$ satisfies a MDP
with speed $b_n^2/n$ and rate function $I\dvtx
S\rightarrow\mathbb{R}_{+}^{*}$ if for each $A\in{\mathcal S}$,
\begin{eqnarray*}
-\inf_{x\in A^{o}}I(x)&\leq&\liminf_{n\rightarrow\infty}
\frac{n}{b^2_{n}}\log\mathbb{P} \biggl(\frac{n}{b_n}M_n\in A
\biggr)\leq\limsup_{n\rightarrow\infty}\frac{n}{b^2_{n}}\log\mathbb
{P}
\biggl(\frac{n}{b_n}M_n\in A \biggr)\\
&\leq&-\inf
_{x\in\overline{A}}I(x);
\end{eqnarray*}
here $A^{o}$ and $\overline{A}$ denote the interior and closure of $A$,
respectively.

The MDP can thus be seen as an intermediate behavior between the
central limit theorem ($b_n=b\sqrt n$) and large
deviation ($b_n=b n$). Usually, the MDP exhibits a simpler rate
function inherited from
the approximated Gaussian process, and holds for a larger class of
dependent random variables
than the large deviation principle.

Our paper is organized as follows. Section \ref{momentscontrol}
states the moments control inequalities and their consequences. We
shall state in this section a first result on the MDP for BMC in a
general framework, but with a very restricted range of speed.
Section~\ref{expoprobaineq} deals with the exponential inequalities
and their consequences. In this section, we shall generalize the MDP
done in Section \ref{momentscontrol}, allowing for a larger range of
speed, but under more stringent assumptions. In Section
\ref{Application}, we will focus particularly on the first order
bifurcating autoregressive processes. The proofs of some
inequalities are technical so postponed in Appendix \ref{appendixA}.
Appendix \ref{appendixB} is devoted to definitions and limit
theorems for martingales used intensively in the paper, and are included
here for completeness.

\section{Moments control and consequences}\label{momentscontrol}

Let $F$ be a vector subspace of $\mathcal{B}(S)$ such that:
\begin{longlist}[(iii)]
\item[(i)] $F$ contains the constants;
\item[(ii)] $F^{2}\subset F$;
\item[(iii)] $F\otimes F \subset L^{1}(P(x,\cdot))$ for all $x\in S$,
and $P(F\otimes F)\subset F$;
\item[(iv)] there exists a probability $\mu$ on $(S,\mathcal{S})$
such that $F\subset L^{1}(\mu)$ and
\[
\lim_{r\rightarrow
\infty}\mathbb{E}_{x} \bigl[f(Y_{r}) \bigr]=(\mu,f)
\]
for all $x\in S$ and $f\in F$;
\item[(v)] for all $f\in F$, there exists $g\in F$ such that for
all $r\in\mathbb{N}$, $|Q^{r}f|\leq g$;
\item[(vi)] $F\subset L^{1}(\nu)$,
\end{longlist}
where we have used the notation $F^2=\{f^2 /f\in F\}$, $F\otimes
F=\{f\otimes g/f,g\in F\}$ and $PE=\{Pf/f\in E\}$ whenever an
operator $P$ acts on a set $E$.

The following hypothesis is about the geometric ergodicity of $Q$:

\begin{longlist}[(H1)]
\item[(H1)]
Assume that for all $f\in F$ such that $(\mu,f)=0$,
there exists $g\in F$ such that for all $r\in\mathbb{N}$ and for
all $x\in S$, $|Q^{r}f(x)|\leq\alpha^{r}g(x)$ for some
$\alpha\in(0,1)$; that is, the Markov chain $(Y_{r},r\in
\mathbb{N})$ is geometrically ergodic.
\end{longlist}

Recall that under this hypothesis, Guyon \cite{Guyon} has shown the
weak law of large numbers for the three empirical average
$\overline{M}_{\mathbb{G}_{r}}(f)$,
$\overline{M}_{\mathbb{T}_{r}}(f)$ and $\overline{M}{}^{\Pi}_{n}(f)$
(see \cite{Guyon}, Theorem 11 when $f\in F$ and Theorem 12 when
$f\in\mathcal{B}(S^3)$) and the strong law of large numbers only
for $\overline{M}_{\mathbb{G}_{r}}(f)$,
$\overline{M}_{\mathbb{T}_{r}}(f)$; see \cite{Guyon}, Theorem 14
and Corollary 15 when $f\in F$ and Theorem 18 when $f\in
\mathcal{B}(S^3)$.

When $f\in\mathcal{B}(S^3)$ and under the additional hypothesis
$Pf^{2}$ and $Pf^{4}$ exist and belong to $F$, he proved the central
limit theorem for $\overline{M}_{\mathbb{T}_{r}}(f)$ and
$\overline{M}{}^{\Pi}_{n}(f)$; see~\cite{Guyon}, Theorem 19 and
Corollary 21. Recall that the central limit theorem for the three
empirical means (\ref{empiricalmean2}) when $f\in\mathcal{B}(S)$ is
still an open question; see \cite{DelMar} for more precision.

In this section, we complete these results by showing the strong law
of large numbers for $\overline{M}{}^{\Pi}_{n}(f)$, when $f\in F$. We
prove\vspace*{1pt} also the law of the iterated logarithm (LIL) and almost sure
functional central limit theorem (ASFCLT) for
$\overline{M}{}^{\Pi}_{n}(f)$ when $f\in\mathcal{B}(S^3)$.


\subsection{Control of the 4th order moments}

In order to establish limit theorems below, let us state the
following:

\begin{theorem}\label{4ordermomentcontrol}
Let $F$ satisfy \textup{(i)--(vi)}. Let $f\in F$ such that $(\mu, f)=0$. We assume
hypothesis \textup{(H1)}. Then for all $r\in\N$,
%
\begin{equation}
\label{4ordercontrol} \mathbb{E} \bigl[ \bigl(\overline{M}_{\mathbb
{G}_{r}}(f)
\bigr)^{4} \bigr]\leq\cases{c \bigl(\frac{1}{4}
\bigr)^{r}, &\quad  if $\alpha^{2}<\frac{1}{2}$,
\vspace*{2pt}\cr
cr^{2} \bigl(\frac{1}{4} \bigr)^{r}, &\quad  if $
\alpha^{2}=\frac{1}{2}$,
\vspace*{2pt}\cr
\displaystyle
c\alpha^{4r}, &\quad  if $
\alpha^{2}>\frac{1}{2}$,}
\end{equation}
where the positive constant $c$ depends on $\alpha$ and $f$ (and may
differ line by line).
\end{theorem}

\begin{pf}
First note that $f(X_{i})\in L^{4}$ for all $i\in\mathbb{G}_{r}$.
Indeed, let $(z_{1},\ldots,z_{r})\in\{0,1\}^{r}$ the unique path in
the binary tree from the root 1 to $i$. Then,
\[
\mathbb{E} \bigl[f^{4}(X_{i}) \bigr]=\nu
P_{z_{1}}\cdots P_{z_{r}}f^{4},
\]
and from hypotheses (ii), (iii) and (vi) we conclude that $\nu
P_{z_{1}}\cdots P_{z_{r}}f^{4}<\infty$.

Now, the proof divides into two parts.\vspace*{9pt}

\textit{Part} 1. \textit{Computation of
$\mathbb{E} [ (\overline{M}_{\mathbb{G}_{r}}(f)
)^{4} ]$}.
Independently of $X$, let us draw four independent indices $I_{r}$,
$J_{r}$, $K_{r}$ and $L_{r}$ uniformly from $\mathbb{G}_{r}$. Then
\[
\mathbb{E} \bigl[ \bigl(\overline{M}_{\mathbb{G}_{r}}(f) \bigr)^{4}
\bigr] = \mathbb{E} \bigl[f(X_{I_{r}})f(X_{J_{r}})f(X_{K_{r}})f(X_{L_{r}})
\bigr].
\]
For all $p\in\{0,\ldots, r \}$, let us define the following
events:
\begin{itemize}
\item$E_{0}^{p}$: The ancestors of $I_{r}$, $J_{r}$, $K_{r}$ and
$L_{r}$ are different in $\mathbb{G}_{p}$.\vspace*{1.2pt}

\item$E_{1}^{p}$: Exactly two of $I_{r}$, $J_{r}$, $K_{r}$ and
$L_{r}$ have the same ancestor in $\mathbb{G}_{p}$.\vspace*{1.2pt}

\item$E_{2}^{p}$: $I_{r}$, $J_{r}$, $K_{r}$ and $L_{r}$ have the same ancestor
two by two in $\mathbb{G}_{p}$.\vspace*{1.2pt}

\item$E_{3}^{p}$: Exactly three of $I_{r}$, $J_{r}$, $K_{r}$ and
$L_{r}$ have the same ancestor in $\mathbb{G}_{p}$.\vspace*{1.2pt}

\item$E_{4}^{p}$: $I_{r}$, $J_{r}$, $K_{r}$ and
$L_{r}$ have the same ancestor in $\mathbb{G}_{p}$.
\end{itemize}
We also consider the following events whose for each fixed $p\leq
r$, probability depend only on $p$.
\begin{itemize}
\item$E_{0}^{\prime p}$: Draw uniformly four independent indices from
$\mathbb{G}_{p}$ which are different.\vspace*{1.2pt}

\item$E_{1}^{\prime p}$: Draw uniformly four independent indices from
$\mathbb{G}_{p}$ such that two are the
same, and the others are different.\vspace*{1.2pt}

\item$E_{2}^{\prime p}$: Draw uniformly four independent indices from
$\mathbb{G}_{p}$ which are the same,
two by two.\vspace*{1.2pt}

\item$E_{3}^{\prime p}$: Draw uniformly four independent indices from
$\mathbb{G}_{p}$ such that exactly three
are the same.\vspace*{1.2pt}

\item$E_{4}^{\prime p}$: Draw uniformly four independent indices from
$\mathbb{G}_{p}$ which are all the same.
\end{itemize}
In the sequel we do the convention that $E_{0}^{r+1}$ is a certain
event. Then after successive conditioning by events $E_{i}^{p}$ for
$p\in\{0,\ldots,r \}$ and $i\in\{0,\ldots,4 \}$, we
have
%
\begin{eqnarray}\label{eqin}
&&
\mathbb{E} \bigl[f(X_{I_{r}})f(X_{J_{r}})f(X_{K_{r}})f(X_{L_{r}})
\bigr] \nonumber\\
&&\qquad=
\mathbb{E} \bigl[f(X_{I_{r}})f(X_{J_{r}})f(X_{K_{r}})f(X_{L_{r}})
/ E_{0}^{2} \bigr]\times\mathbb{P} \bigl(E_{0}^{2}
\bigr)\nonumber
\\
&&\qquad\quad{}+ \sum_{p=2}^{r} \mathbb{E}
\bigl[f(X_{I_{r}})f(X_{J_{r}})f(X_{K_{r}})f(X_{L_{r}})
/ E_{0}^{p+1},E_{1}^{p} \bigr]\times
\mathbb{P} \bigl(E_{1}^{p} \cap E_{0}^{p+1}
\bigr)\nonumber\\[-8pt]\\[-8pt]
&&\qquad\quad{}+ \sum_{p=2}^{r} \mathbb{E}
\bigl[f(X_{I_{r}})f(X_{J_{r}})f(X_{K_{r}})f(X_{L_{r}})
/ E_{0}^{p+1},E_{2}^{p} \bigr]\times
\mathbb{P} \bigl(E_{2}^{p} \cap E_{0}^{p+1}
\bigr)
\nonumber\\
&&\qquad\quad{}+ \mathbb{E} \bigl[f(X_{I_{r}})f(X_{J_{r}})f(X_{K_{r}})f(X_{L_{r}})
/ E_{3}^{r} \bigr]\times\mathbb{P} \bigl(E_{3}^{r}
\bigr)\nonumber
\\
&&\qquad\quad{}+ \mathbb{E} \bigl[f(X_{I_{r}})f(X_{J_{r}})f(X_{K_{r}})f(X_{L_{r}})
/ E_{4}^{r} \bigr]\times\mathbb{P} \bigl(E_{4}^{r}
\bigr).\nonumber
\end{eqnarray}
Let us notice that
\begin{itemize}
\item for all $i\in\{1,2,3,4\}$, $E_{i}^{r}$ and
$E_{i}^{\prime r}$ have the same probability;

\item the realization of ``$E_{1}^{p}\cap E_{0}^{p+1}$'' can be seen as
``draw uniformly four independent indices from
$\mathbb{G}_{p}$ such that two are the same and others are different,
and the two indices which are the same take different paths at
$\mathbb{G}_{p+1}$.'' Thus ``$E_{1}^{p}\cap E_{0}^{p+1}$'' has the same
probability that ``$E_{1}^{\prime p}\cap A_{p,p+1}$,'' where ``$A_{p,p+1}$''
is the event, ``the indices which are the same in $\G_{p}$ take
different paths at $\G_{p+1}$'';

\item similarly, the realization of ``$E_{2}^{p}\cap
E_{0}^{p+1}$'' may be interpreted as, ``draw uniformly four independent
indices from $\mathbb{G}_{p}$ which are the same two by two, and all
the indices take different paths at $\mathbb{G}_{p+1}$.'' Thus
``$E_{2}^{p}\cap E_{0}^{p+1}$'' has the same probability that
``$E_{2}^{\prime p}\cap A_{p,p+1}$,'' where ``$A_{p,p+1}$'' is the event, ``the
indices which are the same in $\G_{p}$ take different paths at
$\G_{p+1}$'';

\item for all $p\in\{0,\ldots,r\}$, we have
\begin{eqnarray*}
\P\bigl(E_{1}^{\prime p}\bigr) &=& \frac{6(2^{p}-1)(2^{p}-2)}{2^{3p}},\qquad
\P\bigl(E_{2}^{\prime p}\bigr) = \frac{3(2^{p}-1)}{2^{3p}},\\
\P\bigl(E_{3}^{\prime p}\bigr) &=& \frac{4(2^{p}-1)}{2^{3p}},\qquad
\P\bigl(E_{4}^{\prime p}\bigr) = \frac{1}{2^{3p}}.
\end{eqnarray*}
\end{itemize}
We may then deduce that
\[
\mathbb{P} \bigl(E_{0}^{2} \bigr)=\frac{3}{32},\qquad
\mathbb{P} \bigl(E_{3}^{r} \bigr) = \frac{4(2^{r}-1)}{2^{3r}},\qquad
\mathbb{P} \bigl(E_{4}^{r} \bigr) = \frac{1}{2^{3r}}
\]
and for $p\in\{2,\ldots,r-1 \}$,
\[
\P\bigl(E_{1}^{p} \cap E_{0}^{p+1}
\bigr) = \mathbb{P} \bigl(E_{1}^{\prime p} \bigr)\mathbb{P}
\bigl(A_{p,p+1} / E_{1}^{\prime p} \bigr) =
\frac{3(2^{p}-1)(2^{p}-2)}{2^{3p}}
\]
and
\[
\P\bigl(E_{2}^{p} \cap E_{0}^{p+1}
\bigr) = \mathbb{P} \bigl(E_{2}^{\prime p} \bigr)\mathbb{P}
\bigl(A_{p,p+1} / E_{2}^{\prime p} \bigr) =
\frac{3}{4}\frac{2^{p}-1}{2^{3p}}.
\]
We are now going to compute each term which appears in (\ref{eqin}).
We have the following convention: $P(Q^{-1}f\otimes Q^{-1}f)=f^{2}$. In
the sequel, we will use intensively, with a slight modification, the
calculations made by Guyon \cite{Guyon} in order to compute
conditional expectations related to the event, ``draw uniformly two
independent indices from $\G_{p}$,'' for $p\in\{0,\ldots,r\}$.

(a) We have that
\[
\mathbb{E} \bigl[f(X_{I_{r}})f(X_{J_{r}})f(X_{K_{r}})f(X_{L_{r}})/
E_{4}^{r} \bigr] = \nu Q^{r}f^{4}.
\]

(b) Conditionally on $E_{3}^{r}$, we may assume that the
indices $I_{r}$, $K_{r}$ and $L_{r}$ are the same. We then have,
using the calculations made by Guyon \cite{Guyon},
\begin{eqnarray*}
&&
\mathbb{E} \bigl[f(X_{I_{r}})f(X_{J_{r}})f(X_{K_{r}})f(X_{L_{r}})
/ E_{3}^{r} \bigr] \\
&&\qquad= \mathbb{E} \bigl[f^{3}(X_{I_{r}})f(X_{J_{r}})
/ E_{3}^{r} \bigr]
\\
&&\qquad= \frac{2^{r}}{2^{r}-1} \Biggl\{\sum_{p=0}^{r-1}2^{-p-2}
\nu Q^{p}P \bigl(Q^{r-p-1}f^{3}\otimes
Q^{r-p-1}f \\
&&\qquad\quad\hspace*{116pt}{}+ Q^{r-p-1}f\otimes Q^{r-p-1}f^{3}
\bigr) \Biggr\}.
\end{eqnarray*}

(c) Let $p\in\{2,\ldots,r \}$. Conditionally on
$E_{2}^{p}$ and $E_{0}^{p+1}$ we may assume that $I_{r}$ and $J_{r}$
have the same ancestor at $\mathbb{G}_{p}$, and $K_{r}$ and $L_{r}$
have the same ancestor at~$\mathbb{G}_{p}$. For simplification, we
will use the following notation:
%
\begin{equation}
\label{notQ}Q_{\otimes}^kf:=Q^kf\otimes
Q^kf,
\end{equation}
and we thus have
\begin{eqnarray*}
&& \mathbb{E} \bigl[f(X_{I_{r}})f(X_{J_{r}})f(X_{K_{r}})f(X_{L_{r}})
/ E_{0}^{p+1},E_{2}^{p} \bigr]
\\
&&\qquad= \mathbb{E} \bigl[\mathbb{E} \bigl[\mathbb{E} \bigl
[f(X_{I_{r}})f(X_{J_{r}})f(X_{K_{r}})f(X_{L_{r}})
/ \mathcal{F}_{p+1} \bigr] / \mathcal{F}_{p} \bigr] /
E_{0}^{p+1},E_{2}^{p} \bigr]
\\
&&\qquad= \mathbb{E} \bigl[P \bigl(Q_{\otimes}^{r-p-1}f \bigr)
(X_{I_{r}\wedge_{p} J_{r}})P \bigl(Q_{\otimes}^{r-p-1}f \bigr)
(X_{K_{r}\wedge_{p} L_{r}}) /E_{0}^{p+1}, E_{2}^{p}
\bigr]
\\
&&\qquad = \frac{2^{p}}{2^{p}-1} \sum_{l=0}^{p-1}2^{-l-1}
\nu Q^{l}P \bigl( \bigl(Q^{p-l-1}P \bigl(Q_{\otimes}^{r-p-1}f
\bigr) \bigr)\\
&&\qquad\quad\hspace*{108pt}{}\otimes\bigl(Q^{p-l-1}P \bigl(Q_{\otimes
}^{r-p-1}f
\bigr) \bigr) \bigr),
\end{eqnarray*}
where $I_{r}\wedge_{p} J_{r}$ (resp., $K_{r}\wedge_{p} L_{r}$) denotes
the common ancestor of $I_{r}$ and $J_{r}$ which is in
$\mathbb{G}_{p}$ (resp., the common ancestor of $K_{r}$ and $L_{r}$
which is in $\mathbb{G}_{p}$).

(d) Let $p\in\{2,\ldots,r \}$. Now conditionally on
$E_{1}^{p}$ and $E_{0}^{p+1}$ we may assume that it is $K_{r}$ and
$L_{r}$ which have the same ancestor in $\mathbb{G}_{p}$. We denote
by $p(I_{r})$ and $p(J_{r})$, respectively, the ancestor of $I_{r}$
and $J_{r}$ which are in $\mathbb{G}_{p}$. As before, the common
ancestor of $K_{r}$ and $L_{r}$, which are in $\mathbb{G}_{p}$, is
denoted by $K_{r}\wedge_{p} L_{r}$. At this step, we may repeat the
successive conditioning that we have done in the beginning but this
time for indices $p(I_{r})$, $p(J_{r})$ and $K_{r}\wedge_{p} L_{r}$.
This leads us to
\begin{eqnarray*}
\hspace*{-4pt}&&
{\fontsize{10.7pt}{12pt}\selectfont{\mbox{$\displaystyle
\mathbb{E} \bigl[f(X_{I_{r}})f(X_{J_{r}})f(X_{K_{r}})f(X_{L_{r}})
/E_{0}^{p+1},E_{1}^{p} \bigr]$}}}
\\
\hspace*{-4pt}&&
{\fontsize{10.7pt}{12pt}\selectfont{\mbox{$\displaystyle \qquad= \mathbb{E} \bigl[Q^{r-p}f(X_{p(I_{r})})Q^{r-p}f(X_{p(J_{r})})P
\bigl(Q_{\otimes}^{r-p-1}f \bigr) (X_{K_{r}\wedge_{p}
L_{r}})
/E_{0}^{p+1},E_{1}^{p} \bigr]$}}}
\\
\hspace*{-4pt}&&
{\fontsize{10.7pt}{12pt}\selectfont{\mbox{$\displaystyle \qquad= \frac{2^{2p}}{(2^{p}-1)(2^{p}-2)}\sum_{l=1}^{p-1}
\frac{1}{2^{l+1}}\frac{1}{2}$}}}\\
\hspace*{-4pt}&&
{\fontsize{10.7pt}{12pt}\selectfont{\mbox{$\displaystyle \qquad\quad{}\times \sum_{m=0}^{l-1}2^{-m-1}
\bigl\{\nu Q^{m}P \bigl( \bigl(Q^{l-m-1}P
\bigl(Q_{\otimes}^{r-l-1}f \bigr) \bigr)
\otimes Q^{p-m-1}P
\bigl(Q_{\otimes}^{r-p-1}f \bigr) \bigr)$}}}\\
\hspace*{-4pt}&&
{\fontsize{10.7pt}{12pt}\selectfont{\mbox{$\displaystyle \qquad\quad\hspace*{0pt}{}+\nu Q^{m}P \bigl( \bigl(Q^{p-m-1}P \bigl(Q_{\otimes}^{r-p-1}f
\bigr) \bigr)
\otimes\bigl(Q^{l-m-1} P \bigl(Q_{\otimes}^{r-l-1}f
\bigr) \bigr) \bigr)$}}}\\
\hspace*{-4pt}&&
{\fontsize{10.7pt}{12pt}\selectfont{\mbox{$\displaystyle \qquad\quad\hspace*{0pt}{}+\nu Q^{m}P \bigl( \bigl(Q^{l-m-1}P \bigl(Q^{r-l-1}f
\otimes Q^{p-l-1}P \bigl(Q_{\otimes}^{r-p-1}f \bigr) \bigr)
\bigr)
\otimes\bigl(Q^{r-m-1}f \bigr) \bigr)$}}}\\
\hspace*{-4pt}&&
{\fontsize{10.7pt}{12pt}\selectfont{\mbox{$\displaystyle \qquad\quad\hspace*{0pt}{}+\nu Q^{m}P \bigl(Q^{r-m-1}f\otimes\bigl(Q^{l-m-1}P
\bigl(Q^{r-l-1}f\otimes Q^{p-l-1}P \bigl(Q_{\otimes}^{r-p-1}f
\bigr) \bigr) \bigr) \bigr)$}}}\\
\hspace*{-4pt}&&
{\fontsize{10.7pt}{12pt}\selectfont{\mbox{$\displaystyle \qquad\quad\hspace*{0pt}{}+\nu Q^{m}P \bigl( \bigl(Q^{l-m-1}P \bigl(Q^{p-l-1}P
\bigl(Q_{\otimes
}^{r-p-1}f \bigr)\otimes Q^{r-l-1}f \bigr)
\bigr)\otimes\bigl(Q^{r-m-1}f \bigr) \bigr)$}}}\\
\hspace*{-4pt}&&
{\fontsize{10.7pt}{12pt}\selectfont{\mbox{$\displaystyle \qquad\quad\hspace*{0pt}{}+\nu Q^{m}P \bigl( \bigl(Q^{r-m-1}f \bigr)\otimes
\bigl(Q^{l-m-1}P \bigl(Q^{p-l-1}P \bigl(Q_{\otimes}^{r-p-1}f
\bigr)\otimes Q^{r-l-1}f \bigr) \bigr) \bigr) \bigr\}.$}}} %
\end{eqnarray*}

(e) Finally,
\begin{eqnarray*}
&& \mathbb{E} \bigl[f(X_{I_{r}})f(X_{J_{r}})f(X_{K_{r}})f(X_{L_{r}})
/ E_{0}^{2} \bigr]
\\
&&\qquad= \mathbb{E} \bigl[\mathbb{E} \bigl[\mathbb{E} \bigl
[f(X_{I_{r}})f(X_{J_{r}})f(X_{K_{r}})f(X_{L_{r}})
/ \mathcal{F}_{2} \bigr] /\mathcal{F}_{1} \bigr]
/E_{0}^{2} \bigr]
\\
&&\qquad= \mathbb{E} \bigl[P \bigl(Q_{\otimes
}^{r-2}f \bigr)
(X_{2}) P \bigl(Q_{\otimes}^{r-2}f \bigr)
(X_{3}) /E_{0}^{2} \bigr]
\\
&&\qquad= \nu P \bigl(P \bigl(Q_{\otimes}^{r-2}f \bigr)\otimes P
\bigl(Q_{\otimes}^{r-2}f \bigr) \bigr).
\end{eqnarray*}
Gathering together all of these terms, each multiplied by their respective
probability, we obtain an explicit expression for
$\mathbb{E} [ (\overline{M}_{\mathbb{G}_{r}}(f)
)^{4} ]$.\vspace*{9pt}

\textit{Part} 2. \textit{Rate}.
We are now going to give some rates for the different terms that
appear in the expression of
$\mathbb{E} [ (\overline{M}_{\mathbb{G}_{r}}(f)
)^{4} ]$.

Throughout this part, we will use intensively the following to bound
quantities which appear in the expression of
$\mathbb{E} [ (\overline{M}_{\mathbb{G}_{r}}(f)
)^{4} ]$:
\begin{itemize}
\item Let $f\in F$ such that $(\mu, f) = 0$. Then from (i)--(vi) and
hypothesis (H1), there exists a positive constant $c$ such
that $\forall l,m,n \in\N$,
\[
\nu Q^{l}P \bigl(Q^{m}f \otimes Q^{n}f \bigr)
\leq\alpha^{m+n}\nu Q^{l}P (g\otimes g ) \leq c
\alpha^{m+n},
\]
where $g$ is given in hypothesis (H1).
\end{itemize}
In the sequel, $c$ denotes a positive constant which depends on $f$,
and $c_{1}$ denotes a positive constant which depends on $\alpha$.
The constants $c$ and $c_{1}$ may vary from one line to another and
from one expression to another.

(a)
For the first term appearing in (\ref{eqin}), we have
\[
\mathbb{E} \bigl[f(X_{I_{r}})f(X_{J_{r}})f(X_{K_{r}})f(X_{L_{r}})
/ E_{0}^{2} \bigr]\times\mathbb{P} \bigl(E_{0}^{2}
\bigr)\leq c_{1}c\alpha^{4r}.
\]

(b)
For the fifth term appearing in (\ref{eqin}), we have
\[
\mathbb{E} \bigl[f(X_{I_{r}})f(X_{J_{r}})f(X_{K_{r}})f(X_{L_{r}})
/ E_{4}^{r} \bigr]\times\mathbb{P} \bigl(E_{4}^{r}
\bigr)\leq c \bigl(\tfrac{1}{2} \bigr)^{3r},
\]
where, from (ii), (v) and (vi), $c$ is such that $\nu Q^{r}f^{4}<c$.

(c)
For the fourth term appearing in (\ref{eqin}), we have
\[
\mathbb{E} \bigl[f(X_{I_{r}})f(X_{J_{r}})f(X_{K_{r}})f(X_{L_{r}})
/ E_{3}^{r} \bigr]\times\mathbb{P} \bigl(E_{3}^{r}
\bigr)\leq cc_{1}\alpha^{r} \biggl(\frac{1}{4}
\biggr)^{r}\sum_{p=0}^{r-1}
\biggl(\frac{1}{2\alpha} \biggr)^{p},
\]
where, from (ii), (iii), (v) and (vi), $c$ is such that for all
$p,q\in\mathbb{N}$
\[
\max\bigl(\nu Q^{p}P \bigl(Q^{q}f^{3}\otimes g
\bigr), \nu Q^{p}P \bigl(g\otimes Q^{q}f^{3}
\bigr) \bigr)<c,
\]
and from hypothesis (H1), $g$ is such that for all
$p\in\{1,\ldots,r-1 \}$
%
\begin{equation}
\label{g} Q^{r-p-1}f\leq\alpha^{r-p-1}g.
\end{equation}
Now depending on the value of $\alpha$, we obtain that
\begin{eqnarray*}
&&
\mathbb{E} \bigl[f(X_{I_{r}})f(X_{J_{r}})f(X_{K_{r}})f(X_{L_{r}})
/ E_{3}^{r} \bigr]\times\mathbb{P} \bigl(E_{3}^{r}
\bigr)\\
&&\qquad\leq\cases{\displaystyle  c_{1}c \biggl( \biggl(\frac{\alpha}{4}
\biggr)^{r}+ \biggl(\frac
{1}{2^{3}} \biggr)^{r} \biggr),
&\quad  if $\alpha\neq\dfrac{1}{2}$,
\vspace*{2pt}\cr
\displaystyle
c_{1}cr \biggl(
\frac{1}{2^{3}} \biggr)^{r}, &\quad  if $\alpha=\dfrac{1}{2}$.}
\end{eqnarray*}

(d)
Let us denote the third term appearing in (\ref{eqin})
by
\[
A_r:=\sum_{p=2}^{r}
\mathbb{E} \bigl[f(X_{I_{r}})f(X_{J_{r}})f(X_{K_{r}})f(X_{L_{r}})
/E_{0}^{p+1},E_{2}^{p} \bigr]\times
\mathbb{P} \bigl(E_{2}^{p}\cap E_{0}^{p+1}
\bigr).
\]
So we have
\[
A_r \leq c_{1}c \Biggl( \biggl(\frac{1}{4}
\biggr)^{r} + \alpha^{4r}\sum_{p=2}^{r-1}
\biggl(\frac{1}{4\alpha^{4}} \biggr)^{p} \Biggr),
\]
where, from (ii), (iii), (v) and (vi), $c$ is such that for all $p\in\{
2,\ldots,r-1\}$, $q\in\{0,\ldots,r-1\}$,
$l\in\{0,\ldots,p-1\}$
\[
\max\bigl(\nu Q^{q}P \bigl(Q_{\otimes}^{r-q-1}f^{2}
\bigr),\nu Q^{l}P \bigl(Q_{\otimes}^{p-l-1}P(g\otimes g)
\bigr) \bigr)<c,
\]
and $g$ is defined as before (\ref{g}) and the notation $Q_{\otimes}$
is given in (\ref{notQ}).\vadjust{\goodbreak}

Now depending on the value of $\alpha$, we obtain that:
\begin{itemize}
\item if
$\alpha^{2}\neq\frac{1}{2}$, then $ A_r\leq
c_{1}c ( (\frac{1}{4} )^{r} +
\alpha^{4r} );$
\item if $\alpha^{2}=\frac{1}{2}$, then $ A_r \leq
c_{1}c (r-1 ) (\frac{1}{4} )^{r}$.
\end{itemize}

(e)
For the second term appearing in (\ref{eqin}), we have
when $p=r$:
\begin{itemize}
\item if $\alpha=\frac{1}{2}$, then
\[
\mathbb{E} \bigl[f(X_{I_{r}})f(X_{J_{r}})f(X_{K_{r}})f(X_{L_{r}})
/E_{1}^{r} \bigr]\times\mathbb{P} \bigl(E_{1}^{r}
\bigr)\leq c_{1}c \bigl(\tfrac{1}{4} \bigr)^{r};
\]

\item if $\alpha\neq\frac{1}{2}$:
\begin{itemize}
\item if $\alpha^{2}=\frac{1}{2}$, then
\[
\mathbb{E} \bigl[f(X_{I_{r}})f(X_{J_{r}})f(X_{K_{r}})f(X_{L_{r}})
/E_{1}^{r} \bigr]\times\mathbb{P} \bigl(E_{1}^{r}
\bigr)\leq c_{1}(r-1) \bigl(\tfrac{1}{4} \bigr)^{r};
\]

\item if $\alpha^{2}\neq\frac{1}{2}$, then
\begin{eqnarray*}
&&
\mathbb{E} \bigl[f(X_{I_{r}})f(X_{J_{r}})f(X_{K_{r}})f(X_{L_{r}})
/E_{1}^{r} \bigr]\times\mathbb{P} \bigl(E_{1}^{r}
\bigr)\\
&&\qquad\leq c_{1}c \biggl( \biggl(\frac{\alpha^{2}}{2}
\biggr)^{r} + \biggl(\frac{1}{4} \biggr)^{r} \biggr),
\end{eqnarray*}
\end{itemize}
\end{itemize}
where, from (ii), (iii), (v) and (vi), $c$ is such that for all $l\in\{
2,\ldots,r-1\}$, $q\in\{0,\ldots,l-1\}$
\begin{eqnarray*}
&&\max\bigl(\nu Q^{q}P \bigl(Q^{l-q-1}P (g\otimes g )\otimes
Q^{r-q-1}f^{2} \bigr),\\
&&\hspace*{7.3pt}\qquad \nu Q^{q}P
\bigl(Q^{l-q-1}P \bigl(g\otimes Q^{r-l-1}f^{2} \bigr)
\otimes g \bigr) \bigr)<c
\end{eqnarray*}
and $g$ is defined as before (\ref{g}).

(f)
For the second terms appearing in (\ref{eqin}), and
for the remaining term in the sum ($p\not= r$), let us denote by
\[
B_r:=\sum_{p=2}^{r-1}
\mathbb{E} \bigl[f(X_{I_{r}})f(X_{J_{r}})f(X_{K_{r}})f(X_{L_{r}})
/E_{0}^{p+1},E_{1}^{p} \bigr]\times
\mathbb{P} \bigl(E_{1}^{p}\cap E_{0}^{p+1}
\bigr).
\]
So we have:
\begin{itemize}
\item if $\alpha= \frac{1}{2}$, then $ B_r\leq
c_{1}c (\frac{1}{4} )^{r}$;\vspace*{1pt}

\item if $\alpha\neq\frac{1}{2}$:\vspace*{1pt}
\begin{itemize}
\item if $\alpha^{2}=\frac{1}{2}$, then $ B_r\leq
c_{1}cr^{2} (\frac{1}{4} )^{r}$;\vspace*{1pt}

\item if $\alpha^{2}\neq\frac{1}{2}$, then $ B_r\leq
c_{1}c (\alpha^{4r} +
(\frac{\alpha^{2}}{2} )^{r} +
(\frac{1}{4} )^{r} )$,
\end{itemize}
\end{itemize}
where $c$ is defined in the same way as before.

Now the results of the Theorem \ref{4ordermomentcontrol} follow from
(a)--(f) of
part 2.
\end{pf}

It leads us to an extension of Theorem \ref{4ordermomentcontrol} to
the two
empirical averages $\overline{M}_{\mathbb{T}_{r}}(f)$ and
$\overline{M}{}^{\Pi}_{n}(f)$.
%
\begin{corollary}\label{4ordermomentcontrol2}
Let $F$ satisfy \textup{(i)--(vi)}. Let $f\in F$ such that $(\mu, f)=0$. We
assume that hypothesis \textup{(H1)} is fulfilled. Then for all $r\in
\N$ and $n\in\N$,
%
\begin{equation}
\label{4ordercontrol2} \mathbb{E} \bigl[ \bigl(\overline{M}_{\mathbb
{T}_{r}}(f)
\bigr)^{4} \bigr]\leq\cases{c \bigl(\frac{1}{4}
\bigr)^{r+1}, &\quad  if $\alpha^{2}<\frac{1}{2}$,
\vspace*{2pt}\cr
cr^{2} \bigl(\frac{1}{4} \bigr)^{r+1}, &\quad  if $
\alpha^{2}=\frac{1}{2}$,
\vspace*{2pt}\cr
c\alpha^{4(r+1)}, &\quad  if $
\alpha^{2}>\frac{1}{2}$,}
\end{equation}
and
%
\begin{equation}
\label{4ordercontrol3} \mathbb{E} \bigl[ \bigl(\overline{M}{}^{\Pi}_{n}(f)
\bigr)^{4} \bigr]\leq\cases{c \bigl(\frac{1}{4}
\bigr)^{r_{n}+1}, &\quad  if $\alpha^{2}<\frac{1}{2}$,
\vspace*{2pt}\cr
cr_{n}^{2} \bigl(\frac{1}{4} \bigr)^{r_{n}+1},
&\quad  if $\alpha^{2}=\frac{1}{2}$,
\vspace*{2pt}\cr
c\alpha^{4(r_{n}+1)}, &\quad  if
$\alpha^{2}>\frac{1}{2}$,}
\end{equation}
where the positive constant $c$ depends on $\alpha$ and $f$ and may
differ line by line.
\end{corollary}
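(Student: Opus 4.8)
The plan is to reduce both estimates to Theorem~\ref{4ordermomentcontrol} by exploiting the generational decomposition together with the triangle inequality in $L^4$. Write $\|Z\|_4=\big(\mathbb{E}[|Z|^4]\big)^{1/4}$ and $S_{\mathbb{G}_q}=\sum_{i\in\mathbb{G}_q}f(X_i)=2^{q}\,\overline{M}_{\mathbb{G}_q}(f)$, so that $S_{\mathbb{T}_r}=\sum_{i\in\mathbb{T}_r}f(X_i)=\sum_{q=0}^{r}S_{\mathbb{G}_q}$ because $\mathbb{T}_r=\bigcup_{q=0}^r\mathbb{G}_q$. For $\overline{M}_{\mathbb{T}_r}(f)$, Minkowski's inequality gives
\[
\|S_{\mathbb{T}_r}\|_4\le\sum_{q=0}^{r}\|S_{\mathbb{G}_q}\|_4=\sum_{q=0}^{r}2^{q}\,\|\overline{M}_{\mathbb{G}_q}(f)\|_4 .
\]
Inserting the three estimates of \eqref{4ordercontrol}, which read $\|\overline{M}_{\mathbb{G}_q}(f)\|_4\le c\,2^{-q/2}$, $c\,q^{1/2}2^{-q/2}$ and $c\,\alpha^{q}$ in the regimes $\alpha^2<1/2$, $\alpha^2=1/2$, $\alpha^2>1/2$, the summand becomes $c\,2^{q/2}$, $c\,q^{1/2}2^{q/2}$ and $c\,(2\alpha)^q$. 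In each case the series is dominated, up to a constant depending only on $\alpha$, by its last term, since $2^{1/2}>1$ and, in the third regime, $\alpha^2>1/2$ forces $2\alpha>1$; thus $\|S_{\mathbb{T}_r}\|_4\le c\,2^{r/2}$, $c\,r^{1/2}2^{r/2}$, $c\,2^{r}\alpha^{r}$ respectively. Dividing by $|\mathbb{T}_r|=2^{r+1}-1\ge 2^r$ and raising to the fourth power yields \eqref{4ordercontrol2} (the factor $4$ between $(1/4)^r$ and $(1/4)^{r+1}$, and $\alpha^{-4}$ between $\alpha^{4r}$ and $\alpha^{4(r+1)}$, are absorbed into $c$, which varies line by line).

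For $\overline{M}_n^\pi(f)$, recall $r_n=\lfloor\log_2 n\rfloor$, so $|\mathbb{T}_{r_n-1}|=2^{r_n}-1\le n\le 2^{r_n+1}-1=|\mathbb{T}_{r_n}|$; in particular $n$ and $|\mathbb{T}_{r_n}|$ agree up to a factor $2$. Since $\pi$ leaves each $\mathbb{G}_q$ invariant, it permutes $\mathbb{T}_{r_n-1}=\{1,\dots,2^{r_n}-1\}$ among itself, so that
\[
n\,\overline{M}_n^\pi(f)=\sum_{i=1}^{n}f(X_{\pi(i)})=S_{\mathbb{T}_{r_n-1}}+R,\qquad R=\sum_{i=2^{r_n}}^{n}f(X_{\pi(i)}),
\]
where $R$ is the sum of $f$ over the image under $\pi$ of $\{2^{r_n},\dots,n\}$, i.e. over a \emph{uniformly random} subset of $\mathbb{G}_{r_n}$ of size $m=n-2^{r_n}+1\le 2^{r_n}$. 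By Minkowski, $\|n\,\overline{M}_n^\pi(f)\|_4\le\|S_{\mathbb{T}_{r_n-1}}\|_4+\|R\|_4$, and $\|S_{\mathbb{T}_{r_n-1}}\|_4=|\mathbb{T}_{r_n-1}|\,\|\overline{M}_{\mathbb{T}_{r_n-1}}(f)\|_4$ is already controlled by \eqref{4ordercontrol2} applied at level $r_n-1$, with the convenient outcome that the resulting rate is the one indexed by $r_n$.

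The main obstacle is $\|R\|_4$, the fourth moment of a sum over a random sub-sample of the last generation. I would treat it by conditioning on $X$: conditionally, $R$ is the total of a size-$m$ sample \emph{without replacement} from the finite population $\{f(X_j):j\in\mathbb{G}_{r_n}\}$, so Hoeffding's comparison inequality for sampling with and without replacement, applied to the convex function $x\mapsto x^4$, gives
\[
\mathbb{E}_\pi\big[R^4\big]\le\mathbb{E}_U\Big[\Big(\textstyle\sum_{i=1}^{m}f(X_{U_i})\Big)^4\Big],
\]
with $U_1,\dots,U_m$ i.i.d.\ uniform on $\mathbb{G}_{r_n}$ and independent of $X$. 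Taking $\mathbb{E}$ over $X$ and expanding the right-hand side over the independent indices, the leading (all-distinct) term is at most $2^{4r_n}\,\mathbb{E}\big[\overline{M}_{\mathbb{G}_{r_n}}(f)^4\big]$, which is exactly of the order of $\|S_{\mathbb{G}_{r_n}}\|_4^4$ supplied by \eqref{4ordercontrol}; the remaining terms carry strictly fewer free indices and are of lower order. Since in every regime the last generation dominates the whole tree (this is precisely why the $\mathbb{G}_r$- and $\mathbb{T}_r$-bounds coincide up to constants), one obtains $\|R\|_4\le c\,\|S_{\mathbb{T}_{r_n}}\|_4$ and hence $\|n\,\overline{M}_n^\pi(f)\|_4\le c\,\|S_{\mathbb{T}_{r_n}}\|_4$. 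Dividing by $n\ge 2^{r_n}\asymp|\mathbb{T}_{r_n}|$ gives $\mathbb{E}\big[\overline{M}_n^\pi(f)^4\big]\le c\,\mathbb{E}\big[\overline{M}_{\mathbb{T}_{r_n}}(f)^4\big]$, and \eqref{4ordercontrol2} at level $r_n$ then yields \eqref{4ordercontrol3}. The only genuinely delicate point is verifying that the non-leading terms of the $U$-expansion do not spoil the rate; this follows from assumptions (ii),(iii),(v),(vi) by the same $P,Q$-manipulations used in Part~2 of Theorem~\ref{4ordermomentcontrol}.
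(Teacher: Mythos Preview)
Your argument is correct and follows essentially the same route as the paper. For $\overline{M}_{\mathbb{T}_r}(f)$ you use Minkowski in $L^4$ together with the $\mathbb{G}_q$-bound \eqref{4ordercontrol}, which is exactly what the paper means by ``the same steps as in the proof of Part~2 and Part~3 of Theorem~\ref{someprobaineq1}'' (there for $L^2$, here for $L^4$).

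For the incomplete-generation residual $R$, the paper instead expands $\mathbb{E}\big[(\sum_{i=2^{r_n}}^n f(X_{\pi(i)}))^4\big]$ directly into the five index patterns (all distinct, one pair, two pairs, one triple, all equal) and uses that the law of $(\pi(i_1),\dots,\pi(i_k))$ for distinct $i_j$ equals the law of i.i.d.\ uniforms on $\mathbb{G}_{r_n}$ \emph{conditioned} on being distinct; each pattern is then bounded by the explicit rates computed in the proof of Theorem~\ref{4ordermomentcontrol}. Your Hoeffding comparison (sampling with versus without replacement, applied conditionally on $X$ to the convex function $x\mapsto x^4$) is a clean shortcut that removes the conditioning on distinctness at the outset and leads to the same five-pattern expansion with i.i.d.\ indices; the bounds are then identical.

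One small imprecision: the claim that the non-leading patterns are ``of lower order'' is not literally true in every regime. For instance, in the case $\alpha^2<1/2$, the two-pair term $3m(m-1)\,\mathbb{E}\big[(\overline{M}_{\mathbb{G}_{r_n}}(f^2))^2\big]$ is of order $2^{2r_n}$, the \emph{same} as the all-distinct term; similarly for the one-pair term. What is true, and sufficient, is that each pattern is bounded by the target rate ($2^{2r_n}$, $r_n^2 2^{2r_n}$, or $(2\alpha)^{4r_n}$) via the $L^2$/$L^4$ bounds on $\overline{M}_{\mathbb{G}_{r_n}}(f)$ and the uniform boundedness of $\mathbb{E}[\overline{M}_{\mathbb{G}_{r_n}}(f^k)^2]$ for $k\ge 2$ (coming from (ii),(v),(vi)). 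With that wording corrected, your proof stands.
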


\begin{pf}
The proof follows the same steps as in the proof of parts 2
and 3 of Theorem \ref{someprobaineq1}, and uses the
results of the proof of Theorem \ref{thmslln} to get the control of
the 4th order moment in incomplete generation. See Sections
\ref{sllnonincompletetree} and \ref{proofsomeprobaineq1} for more
detail.
\end{pf}

\begin{remark}\label{4ordermomentcontrol3}
If $f\in\mathcal{B}(S^{3})$ is such that $Pf^{2}$ and
$Pf^{4}$ exist and belong to $F$, with $Pf=0$, then we have for all
$r\in\N$ and for some positive constant~$c$,
%
\begin{equation}
\label{4ordercontrol4} \mathbb{E} \bigl[ \bigl(\overline{M}_{\mathbb
{G}_{r}}(f)
\bigr)^{4} \bigr]\leq\frac{c}{|\mathbb{G}_{r}|^{2}}.
\end{equation}
Indeed, let $M_{\mathbb{G}_{r}}(f) = \sum_{i\in
\mathbb{G}_{r}}f(\Delta_{i})$. We have
\begin{eqnarray*}
\mathbb{E} \bigl[ \bigl(M_{\mathbb{G}_{r}}(f) \bigr)^{4} \bigr] &=&
\mathbb{E} \bigl[M_{\mathbb{G}_{r}}\bigl(f^{4}\bigr) \bigr]+6\mathbb{E}
\biggl[\sum_{i\neq
j\in\mathbb{G}_{r}}f^{2}(
\Delta_{i})f^{2}(\Delta_{j}) \biggr]\\
&&{}+4
\mathbb{E} \biggl[\sum_{i\neq
j\in\mathbb{G}_{r}}f^{3}(
\Delta_{i})f(\Delta_{j}) \biggr]
\\
&&{}+12\mathbb{E} \biggl[\sum_{i\neq j\neq
k\in\mathbb{G}_{r}}f^{2}(
\Delta_{i})f(\Delta_{j})f(\Delta_{k})
\biggr]\\
&&{}+24\mathbb{E} \biggl[\sum_{i\neq
j\neq k\neq
l\in\mathbb{G}_{r}}f(
\Delta_{i})f(\Delta_{j})f(\Delta_{k})f(\Delta
_{l}) \biggr]
\\
&=&\mathbb{E} \biggl[\sum_{i\in\mathbb
{G}_{r}}Pf^{4}(X_{i})
\biggr]+6\mathbb{E} \biggl[\sum_{i\neq
j\in\mathbb{G}_{r}}Pf^{2}(X_{i})Pf^{2}(X_{j})
\biggr],
\end{eqnarray*}
where the last equality was obtained after conditioning by
$\mathcal{F}_{r}$ and using the fact that $Pf=0$. Now, dividing by
$|\mathbb{G}_{r}|^{4}$ leads us to
\begin{eqnarray*}
\mathbb{E} \bigl[ \bigl(\overline{M}_{\mathbb{G}_{r}}(f) \bigr)^{4}
\bigr]&=&\frac{6}{|\mathbb{G}_{r}|^{2}}\mathbb{E} \biggl[\frac{1} {
|\mathbb{G}_{r}|^{2}}\sum
_{i\neq j\in
\mathbb{G}_{r}}Pf^{2}(X_{i})Pf^{2}(X_{j})
\biggr] \\
&&{}+ \frac{1}{|\mathbb{G}_{r}|^{3}}\mathbb{E} \biggl[\frac
{1}{|\mathbb
{G}_{r}|}\sum
_{i\in
\mathbb{G}_{r}}Pf^{4}(X_{i}) \biggr]
\\
&\leq& \frac{6}{|\mathbb{G}_{r}|^{2}}\mathbb{E} \bigl[ \bigl(\overline
{M}_{\mathbb{G}_{r}}
\bigl(Pf^{2}\bigr) \bigr)^{2} \bigr]\\
&&{} + \frac{1}{|\mathbb{G}_{r}|^{3}}
\mathbb{E} \bigl[\overline{M}_{\mathbb{G}_{r}}\bigl(Pf^{4}\bigr) \bigr],
\end{eqnarray*}
and (\ref{4ordercontrol4}) then follows from the control of
\[
\bigl(\mathbb{E} \bigl[ \bigl(\overline{M}_{\mathbb{G}_{r}}
\bigl(Pf^{2}\bigr) \bigr)^{2} \bigr] \bigr)_{r}\quad \mbox{and}\quad
\bigl(\mathbb{E} \bigl[\overline{M}_{\mathbb{G}_{r}}\bigl(Pf^{4}\bigr)
\bigr] \bigr)_{r};
\]
see \cite{Guyon}.
\end{remark}

\begin{remark}\label{remcontrolT}
From Remark \ref{4ordermomentcontrol3}, we deduce that if $f\in
\mathcal{B}(S^{3})$ is such that $Pf^{2}$ and $Pf^{4}$ exist and
belong to $F$, with $Pf=0$, then we have for all $r\in\N$ and for
some positive constant $c$,
%
\begin{equation}
\label{4ordercontrolT} \mathbb{E} \bigl[ \bigl(
\overline{M}_{\mathbb{T}_{r}}(f) \bigr)^{4} \bigr]\leq c
\bigl(\tfrac{1}{4} \bigr)^{r+1}.
\end{equation}
Indeed, from the equality
\[
\overline{M}_{\mathbb{T}_{r}}(f) = \sum_{q=0}^{r}
\frac{|\G_{q}|}{|\T_{r}|}\overline{M}_{\mathbb{G}_{q}}(f),
\]
we deduce that
\[
\E\bigl[ \bigl(\overline{M}_{\mathbb{T}_{r}}(f) \bigr)^{4} \bigr]
\leq\Biggl(\sum_{q=0}^{r}
\frac{|\G_{q}|}{|\T_{r}|} \bigl\llVert\overline{M}_{\mathbb
{G}_{q}}(f)\bigr\rrVert
_{4} \Biggr)^{4},
\]
where $\|\cdot\|_{4}$ stands for the $L^{4}$-norm. We then infer
from (\ref{4ordercontrol4}) that
\[
\E\bigl[ \bigl(\overline{M}_{\mathbb{T}_{r}}(f) \bigr)^{4} \bigr]
\leq c \Biggl(\sum_{q=0}^{r}
\frac{ (\sqrt{2} )^{q}}{2^{r+1}} \Biggr)^{4}
\]
for some positive constant $c$. (\ref{4ordercontrolT}) then follows
from the last inequality.
\end{remark}

\subsection{Strong law of large numbers on incomplete subtree}
\label{sllnonincompletetree}

We now turn to prove the strong law of large numbers for
$\overline{M}{}^{\Pi}_{n}(f)$, completing the work of Guyon
\cite{Guyon}, where the LLN was proved only for the two averages
$\overline{M}_{{\T}_r}(f)$ and
$\overline{M}_{\mathbb{G}_r}(f)$.\vadjust{\goodbreak}
%
\begin{theorem}\label{thmslln}
Let $F$ satisfy \textup{(i)--(vi)}. Let $f\in F$ such that $(\mu,f)=0$. We
assume that hypothesis \textup{(H1)} is fulfilled with $\alpha
\in(0,\frac{\sqrt[4]{8}}{2} )$. Then
$\overline{M}{}^{\Pi}_{n}(f)$ almost surely converges to 0 as $n$
goes to $\infty$.
\end{theorem}

\begin{pf}
From the decomposition
\[
\overline{M}{}^{\Pi}_{n}(f)=\sum_{q=0}^{r_{n}-1}
\frac
{2^{q}}{n}\overline{M}_{\mathbb{G}_{q}}(f) + \frac{1}{n}\sum
_{i=2^{r_{n}}}^{n}f(X_{\Pi(i)}),
\]
it is enough to check that
\[
\sum_{n=1}^{\infty}\mathbb{E} \Biggl[ \Biggl(
\frac{1}{n}\sum_{i=2^{r_{n}}}^{n}f
(X_{\Pi(i)} ) \Biggr)^{4} \Biggr]<\infty.
\]
Indeed, since $\overline{M}_{\mathbb{G}_{q}}(f)$ almost surely
converges to 0 (Corollary 15 in \cite{Guyon}),
we deduce that the first term on the right-hand side
of the previous decomposition almost surely converges to 0 (Lemma 13 in
\cite{Guyon}).
We have
%
\begin{eqnarray}
\label{cc1}
&&
\mathbb{E} \Biggl[ \Biggl(\frac{1}{n}\sum
_{i=2^{r_{n}}}^{n}f (X_{\Pi(i)} )
\Biggr)^{4} \Biggr]\nonumber\\
&&\qquad=\frac{1}{n^{4}}\mathbb{E} \Biggl[\sum
_{i=2^{r_{n}}}^{n}f^{4} (X_{\Pi(i)} )
\Biggr]
+ \frac{6}{n^{4}}\mathbb{E} \Biggl[\sum_{i,j=2^{r_{n}};i\neq
j}^{n}f^{2}
(X_{\Pi(i)} )f^{2} (X_{\Pi(j)} ) \Biggr]
\\
&&\qquad\quad{}+ \frac{4}{n^{4}}\mathbb{E} \Biggl[\sum_{i,j=2^{r_{n}};i\neq
j}^{n}f^{3}
(X_{\Pi(i)} )f (X_{\Pi(j)} ) \Biggr]
\nonumber\\
&&\qquad\quad{}+ \frac{12}{n^{4}}\mathbb{E} \Biggl[\sum_{i,j,k=2^{r_{n}};i\neq
j\neq k}^{n}f^{2}(X_{\Pi(i)})f(X_{\Pi(j)})f(X_{\Pi(k)})
\Biggr]
\nonumber
\\
&&\qquad\quad{}+ \frac{24}{n^{4}}\mathbb{E} \Biggl[\sum_{i,j,k,l=2^{r_{n}};
i\neq j\neq k\neq
l}^{n}f
(X_{\Pi(i)} )f (X_{\Pi(j)} )f (X_{\Pi(k)} )f
(X_{\Pi(l)} ) \Biggr].
\nonumber
\end{eqnarray}
We will control each term appearing in decomposition (\ref{cc1}). For the
first term on the right-hand side of (\ref{cc1}), using (ii), (v) and
(vi) we have for
some positive constant $c$,
\[
\mathbb{E} \Biggl[\sum_{i=2^{r_{n}}}^{n}f^{4}
(X_{\Pi
(i)} ) \Biggr] = \bigl(n-2^{r_{n}}+1\bigr)\nu
Q^{r_{n}}f^{4} \leq c\bigl(n-2^{r_{n}}+1\bigr),
\]
which implies that
%
\begin{equation}
\label{t1}\frac{1}{n^{4}}\mathbb{E} \Biggl[\sum_{i=2^{r_{n}}}^{n}f^{4}
(X_{\Pi(i)} ) \Biggr] = O \biggl(\frac{1}{n^{3}} \biggr).
\end{equation}
Recall the following: for $i,j,k$ and $l\in
\{2^{r_{n}},\ldots,n\}$:
\begin{itemize}
\item If $i\neq j$, then $r_{n}\geq1$. Independently on $(X,\Pi)$,
draw two independent
indices $I_{r_{n}}$ and $J_{r_{n}}$ uniformly from
$\mathbb{G}_{r_{n}}$. Then the law of $(\Pi(i),\Pi(j))$ is the
conditional law of $(I_{r_{n}},J_{r_{n}})$ given $\{I_{r_{n}}\neq
J_{r_{n}}\}$.
\item If $i\neq j \neq k$, then $r_{n}\geq2$. Independently on $(X,\Pi
)$, draw three independent
indices $I_{r_{n}}, J_{r_{n}}$ and $K_{r_{n}}$ uniformly from
$\mathbb{G}_{r_{n}}$. Then the law of $(\Pi(i),\Pi(j), \Pi(k))$ is
the conditional law of $(I_{r_{n}},J_{r_{n}},K_{r_{n}})$ given
$\{I_{r_{n}}\neq J_{r_{n}}\neq K_{r_{n}}\}$.
\item If $i\neq j \neq k \neq l$, then $r_{n}\geq2$. Independently on
$(X,\Pi)$, draw four independent
indices $I_{r_{n}}, J_{r_{n}}, K_{r_{n}}$ and $L_{r_{n}}$ uniformly
from $\mathbb{G}_{r_{n}}$. Then the law of $(\Pi(i),\Pi(j), \Pi(k)),
\Pi(l))$ is the conditional law of $(I_{r_{n}},J_{r_{n}},K_{r_{n}},
L_{r_{n}})$ given $\{I_{r_{n}}\neq J_{r_{n}}\neq K_{r_{n}}\neq
J_{r_{n}}\}$.
\end{itemize}
Now we have to control the second and third terms of (\ref{cc1}). We
have to check that
%
\begin{equation}
\label{t2}\frac{1}{n^{4}}\mathbb{E} \Biggl[\sum_{i,j=2^{r_{n}};i\neq
j}^{n}f^{2}
(X_{\Pi(i)} )f^{2} (X_{\Pi(j)} ) \Biggr]=O \biggl(
\frac{1}{n^{2}} \biggr)
\end{equation}
and
%
\begin{equation}
\label{t3}\frac{1}{n^{4}}\mathbb{E} \Biggl[\sum_{i,j=2^{r_{n}};i\neq
j}^{n}f^{3}
(X_{\Pi(i)} )f (X_{\Pi(j)} ) \Biggr]=o \biggl(\frac{1}{n^{2}}
\biggr).
\end{equation}
Indeed, from the previous reminder and (i)--(vi), we have for some
positive constant $c$,
\begin{eqnarray*}
&&
\mathbb{E} \Biggl[\sum_{i,j=2^{r_{n}};i\neq
j}^{n}f^{2}
(X_{\Pi(i)} )f^{2} (X_{\Pi(j)} ) \Biggr] \\
&&\qquad=
\frac{ (n-2^{r_{n}})(n-2^{r_{n}}+1)}{(1-2^{-r_{n}})} \\
&&\qquad\quad{}\times\sum_{p=0}^{r_{n}-1}
2^{-p-1}\nu Q^{p}P \bigl(Q_{\otimes}^{r_{n}-p-1}f^{2}
\bigr)
\\
&&\qquad\leq c\bigl(n-2^{r_{n}}\bigr) \bigl(n-2^{r_{n}}+1\bigr),
\end{eqnarray*}
which implies (\ref{t2}). In the same way and using in addition
hypothesis (H1), we obtain that
\begin{eqnarray*}
&& \mathbb{E} \Biggl[\sum_{i,j=2^{r_{n}};i\neq
j}^{n}f^{3}
(X_{\Pi(i)} )f (X_{\Pi(j)} ) \Biggr]
\\
&&\qquad=\frac{(n-2^{r_{n}})(n-2^{r_{n}}+1)}{(1-2^{-r_{n}})}\\
&&\qquad\quad{}\times \sum_{p =
0}^{r_{n} - 1}
2^{-p-2} \nu Q^{p}P\bigl(Q^{r_{n}-p-1}f^{3}
\otimes Q^{r_{n}-p-1}f
\\
&&\hspace*{93pt}\qquad\quad{} + Q^{r_{n}-p-1}f \otimes Q^{r_{n}-p-1}f^{3}\bigr)
\\
&&\qquad\leq\cases{\displaystyle  c 2^{-r_{n}}\bigl(n-2^{r_{n}}\bigr)
\bigl(n-2^{r_{n}}+1\bigr), &\quad  if $\alpha< \frac{1}{2}$,
\vspace*{2pt}\cr
\displaystyle
c
r_{n}2^{-r_{n}}\bigl(n-2^{r_{n}}\bigr)
\bigl(n-2^{r_{n}}+1\bigr), &\quad  if $\alpha= \frac{1}{2}$,
\vspace*{2pt}\cr
\displaystyle
c
\alpha^{r_{n}}\bigl(n-2^{r_{n}}\bigr) \bigl(n-2^{r_{n}}+1
\bigr), &\quad  if $\alpha> \frac{1}{2}$,}
\end{eqnarray*}
which implies (\ref{t3}).

Let us deal with the remaining term of (\ref{cc1}):
\begin{eqnarray*}
&&
\frac{1}{n^{4}}\mathbb{E} \Biggl[\sum_{i,j,k=2^{r_{n}};i\neq
j\neq k}^{n}f^{2}(X_{\Pi(i)})f(X_{\Pi(j)})f(X_{\Pi(k)})
\Biggr]\\
&&\qquad= \frac{(n-2^{r_{n}}-1)(n-2^{r_{n}})(n-2^{r_{n}}+1)}{\mathbb
{P}(I_{r_{n}}\neq
J_{r_{n}}\neq K_{r_{n}})\times n^{4}}
\\
&&\qquad\quad{}\times\mathbb{E} \bigl[f^{2} (X_{I_{r_{n}}} )f
(X_{J_{r_{n}}} )f (X_{K_{r_{n}}} ) \mathbf{1}_{\{I_{r_{n}}\neq
J_{r_{n}}\neq K_{r_{n}}\}} \bigr].
\end{eqnarray*}
Then, we get an explicit expression for the last expectation similar
to that obtained in part (d) of the calculus of
$\mathbb{E} [(\overline{M}_{\mathbb{G}_{r}}(f))^{4} ]$ with a
slight modification of the functions. Calculating the rate of this
expression, we obtain
\begin{eqnarray*}
& &\sum_{n=4}^{\infty}\frac{1}{n^{4}}
\mathbb{E} \Biggl[\sum_{i,j,k=2^{r_{n}};i\neq
j\neq k}^{n}f^{2}(X_{\Pi(i)})f(X_{\Pi(j)})f(X_{\Pi(k)})
\Biggr]
\\
&&\qquad\leq c\sum_{n=1}^{\infty}\frac{1}{n}
\alpha^{2r_{n}} +c\sum_{n=1}^{\infty} \sum
_{p=2}^{r_{n}-1}\sum
_{l=0}^{p-1}\frac{1}{n}\frac
{1}{2^{p}}
\frac{1}{2^{l+1}}\alpha^{2r_{n}-2p} \\
&&\qquad\quad{}+ c\sum_{n=1}^{\infty}
\sum_{p=2}^{r_{n}-1}\sum
_{l=0}^{p-1}\frac{1}{n}\frac{1}{2^{p}}
\frac{1}{2^{l+1}}\alpha^{2r_{n}-p-l}
\end{eqnarray*}
for some positive $c$. Now it is not
hard to see that the right-hand side is finite.

Finally, to check that the series of general term
\[
\frac{1}{n^{4}}\mathbb{E} \Biggl[\sum_{i,j,k,l=2^{r_{n}};
i\neq j\neq k\neq
l}^{n}f
(X_{\Pi(i)} )f (X_{\Pi(j)} )f (X_{\Pi(k)} )f
(X_{\Pi(l)} ) \Biggr]
\]
is finite, it is enough, according to the calculation of rates we
have done in part 2 of the proof of Theorem
\ref{4ordercontrol}, to check that
$\sum_{n=1}^{\infty}\alpha^{4r_{n}}<\infty$, which is the
case if $\alpha\in(0,\frac{\sqrt[4]{8}}{2} )$, and this completes
the proof of Theorem \ref{thmslln}.
\end{pf}

\begin{remark}
Note that this theorem can be improved, but the price to pay is
enormous computations related to the calculation of higher moments.
If $f$ is bounded, this result is true for every $\alpha\in(0,1)$, as
we will see in Section \ref{expoprobaineq}.
\end{remark}

\subsection{Law of the iterated logarithm (LIL)}\label{lil}

Using the LIL for martingales (see Theorem \ref{thmStoutLIL}
of Stout in Appendix \ref{appendixB}), we are going to prove a
LIL for the BMC. This will be done when $f$ depends on the
mother-daughters triangle $(\Delta_i)$. We use the notation
$M_{n}^{\Pi}(f)=\sum_{i=1}^{n}f(\Delta_{\Pi(i)})$ and
$M_{\mathbb{T}_{r}}(f) = \sum_{i\in\mathbb{T}_{r}
}f(\Delta_{i})$.
%
\begin{theorem}\label{thmLILforBMC}
Let $F$ satisfy \textup{(i)--(vi)}. Let
$f\in\mathcal{B} (\mathcal{S}^{3} )$ such that $Pf=0$,
$Pf^{2}$ and $Pf^{4}$ exist and belong to $F$. We assume that
hypothesis \textup{(H1)} is fulfilled. Then
\[
\limsup_{n\rightarrow\infty}\frac{M_{n}^{\Pi}(f)}{\sqrt{2\langle
M^{\Pi}(f)\rangle_{n}\log\log\langle M^{\Pi}(f)\rangle_{n}}}=1 \qquad\mbox{a.s.}
\]
And in particular,
\[
\limsup_{r\rightarrow\infty}\frac{M_{\mathbb{T}_{r}}(f)}{\sqrt
{2|\mathbb{T}_{r}|\log\log{|\mathbb{T}_{r}|}}} = \sqrt{\bigl(
\mu,Pf^{2}\bigr)} \qquad\mbox{a.s.}
\]
\end{theorem}

\begin{pf} We will check the hypothesis of Stout Theorem's \ref
{thmStoutLIL}.
Let $f\in\mathcal{B} (\mathcal{S}^{3} )$. We introduce the
filtration $(\mathcal{H}_{n})_{n\ge0}$ defined by
$\mathcal{H}_{0}=\sigma(X_{1})$ and $\mathcal{H}_{n} =
\sigma(\Delta_{\Pi(i)}, \Pi(i+1), 1\leq i\leq n )$. Let
$ (M^{\Pi}_{n}(f) )_{n\ge0}$ defined by $M^{\Pi}_{0}(f) = 0$
and $M^{\Pi}_{n}(f)=\sum_{i=1}^{n}f(\Delta_{\Pi(i)})$. Then
since $Pf=0$, $ (M^{\Pi}_{n}(f) )$ is a
$\mathcal{H}_{n}$-martingale with
$\mathbb{E} [M^{\Pi}_{1}(f) ]=0$. The bracket of the above
martingale is given by
\[
\bigl\langle M^{\Pi}(f)\bigr\rangle_{n} = \sum
_{i=0}^{n} Pf^{2}(X_{\Pi(i)}) =
M_{n}^{\Pi}\bigl(Pf^{2}\bigr).
\]
We have the following decomposition:
\[
\frac{\langle M^{\Pi}(f)\rangle_{n}}{n} = \overline{M}{}^{\Pi}_n
\bigl(Pf^2\bigr) = \sum_{q=0}^{r_{n}-1}
\frac{2^{q}}{n}\overline{M}_{\mathbb
{G}_{q}}\bigl(Pf^{2}\bigr) +
\frac{1}{n}\sum_{i=2^{r_{n}}}^{n}Pf^{2}(X_{\Pi(i)}).
\]
Since
\[
\forall q\leq r_{n}-1\qquad \frac{2^{q}}{2^{r_{n}+1}} \leq\frac{2^{q}}{n}
\leq\frac{2^{q}}{2^{r_{n}}} \quad\mbox{and}\quad \frac{1}{n}\sum
_{i=2^{r_{n}}}^{n}Pf^{2}(X_{\Pi(i)}) \leq
\overline{M}_{\G_{r_{n}}}\bigl(Pf^{2}\bigr),
\]
we deduce that
\[
\sum_{q=0}^{r_{n}-1}
\frac{2^{q}}{2^{r_{n}+1}} \overline{M}_{\G_{q}}\bigl(Pf^{2}\bigr) \leq
\overline{M}{}^{\Pi}_{n}\bigl(Pf^{2}\bigr) \leq\sum
_{q=0}^{r_{n}} \frac{2^{q}}{2^{r_{n}}}
\overline{M}_{\G_{q}}\bigl(Pf^{2}\bigr).
\]
From the strong law of large numbers of
$\overline{M}_{\G_{q}}(Pf^{2})$ (see \cite{Guyon}, Corollary 15) and
from Lemma 5.2 of \cite{DesaporGegMar}, we infer that
\[
\sum_{q=0}^{r_{n}-1}\frac{2^{q}}{2^{r_{n}+1}}
\overline{M}_{\G_{q}}\bigl(Pf^{2}\bigr) \stackrel{\mathrm{a.s.}}
{\longrightarrow} \frac{(\mu, Pf^{2})}{2} \quad\mbox{and}\quad \sum_{q=0}^{r_{n}}
\frac{2^{q}}{2^{r_{n}}} \overline{M}_{\G_{q}}\bigl(Pf^{2}\bigr)
\stackrel{\mathrm{a.s.}}{\longrightarrow} 2\bigl(\mu, Pf^{2}\bigr).
\]
Using these results, we thus deduce that $\langle
M^{\Pi}(f)\rangle_{n}=O(n)$ and
$n=\break O (\langle M^{\Pi}(f)\rangle_{n} )$ a.s.
This implies in particular that $\langle
M^{\Pi}(f)\rangle_{n}\underset{n\rightarrow \infty}{\longrightarrow}
\infty$ a.s.

Now let $K_{n}=\frac{\sqrt{2}}{\sqrt{\log\log(n)}}$ in Theorem
\ref{thmStoutLIL}, and we have
\begin{eqnarray*}
R&:=&\sum_{n=1}^{\infty}\frac{2\log\log\langle
M^{\Pi}(f)\rangle_{n}}{K_{n}^{2}\langle
M^{\Pi}(f)\rangle_{n}}\\
&&\hspace*{14.6pt}{}\times
\mathbb{E} \bigl[ f^{2}(\Delta_{\Pi(n)})\mathbf{1}_{ \{f^{2}(\Delta_{\Pi
(n)})>{K_{n}^{2}\langle
M^{\Pi}(f)\rangle_{n}}
/({2\log\log\langle M^{\Pi}(f)\rangle_{n}}) \}}/
\mathcal{H}_{n-1} \bigr]
\\
&\leq&\sum_{n=1}^{\infty}\frac{4(\log\log\langle
M^{\Pi}(f)\rangle_{n})^{2}}{K_{n}^{4}(\langle
M^{\Pi}(f)\rangle_{n})^{2}}Pf^{4}(X_{\Pi(n)})\qquad
\mbox{a.s.},
\end{eqnarray*}
since $\langle M^{\Pi}(f)\rangle_{n}=O(n)$ a.s., so that for
$R<\infty$ a.s., it is enough to check that
%
\begin{equation}
\label{eq2} \sum_{n=1}^{\infty}
\frac{Pf^{4}(X_{\Pi(n)})}{n^{\delta
}}<\infty\qquad\mbox{a.s. with any $1<\delta<2$.}
\end{equation}
Now, according to (v) and (vi), there exists a positive constant $c$
such that for all $n\geq1$, $\mathbb{E} [Pf^{4} (X_{\Pi(n)} ) ]=\nu
Q^{r_{n}}Pf^{4}\leq c$, and (\ref{eq2}) follows. Applying
Theorem~\ref{thmStoutLIL}, we have
\[
\limsup_{n\rightarrow\infty}\frac{M_{n}^{\Pi}(f)}{\sqrt{2\langle
M^{\Pi}(f)\rangle_{n}\log\log\langle M^{\Pi}(f)\rangle_{n}}}=1 \qquad\mbox{a.s.}
\]
Now, for $n=|\mathbb{T}_{r}|$, we have the following:
\begin{eqnarray*}
&&\frac{M_{\mathbb{T}_{r}}(f)}{\sqrt{2\langle M^{\Pi}(f)\rangle
_{|\mathbb{T}_{r}|}\log\log\langle M^{\Pi}(f)\rangle_{|\mathbb
{T}_{r}|}}} \\
&&\qquad=\sqrt{\frac{|\mathbb{T}_{r}|{\langle
M^{\Pi}(f)\rangle_{|\mathbb{T}_{r}|}}/{|\mathbb{T}_{r}|}}{2\log\log
\langle
M^{\Pi}(f)\rangle_{|\mathbb{T}_{r}|}}}\times\frac{M_{\mathbb
{T}_{r}}(f)}{|\mathbb{T}_{r}|{\langle
M^{\Pi}(f)\rangle_{|\mathbb{T}_{r}|}}/{|\mathbb{T}_{r}|}}
\end{eqnarray*}
and since $\frac{\langle
M^{\Pi}(f)\rangle_{|\mathbb{T}_{r}|}}{|\mathbb{T}_{r}|}=\overline
{M}_{\mathbb{T}_{r}}(Pf^2)\underset{r
\rightarrow\infty}{\longrightarrow} (\mu,Pf^{2})$ a.s. (see Theorem
18 in \cite{Guyon}), we get
\[
\limsup_{r\rightarrow
\infty}\frac{M_{\mathbb{T}_{r}}(f)}{\sqrt{2|\mathbb{T}_{r}|
\log\log|\mathbb{T}_{r}|}} = \sqrt{\bigl(
\mu,Pf^{2}\bigr)} \qquad\mbox{a.s.},
\]
which completes the proof.
\end{pf}

\begin{remark}
Let us note that using Theorem \ref{thmslln}, we can prove that if
hypothesis (H1) is fulfilled with $\alpha\in
(0,\frac{\sqrt[4]{8}}{2} )$, then
\[
\limsup_{n\rightarrow\infty}\frac{M_{n}^{\Pi}(f)}{\sqrt{2n\log
\log{n}}}= \sqrt{\bigl(
\mu,Pf^{2}\bigr)} \qquad\mbox{a.s.},
\]
and via the computation of $2k$th order moments of
$\overline{M}_{\mathbb{G}_{r}}(g)$, with $k>2$ and $g\in
\mathcal{B}(S )$, it is possible to prove the latter for all
$\alpha\in(0,1)$. But, as already emphasized, this comes at the price
of enormous computations.
\end{remark}

\subsection{Almost-sure functional central limit
theorem (ASFCLT)}\label{almost-surefunctionalclt}

We are now going to prove an ASFCLT theorem for the BMC $(X_{n},n\in
\mathbb{T})$. Here again, this will be done when $f$ depends on the
mother-daughters triangle by using the ASFCLT for discrete time
martingale. We refer to Chaabane, Theorem \ref{thmchaabanetlcpsf},
Appendix \ref{appendixB}, for the definition of an
ASFCLT.
%
\begin{theorem}\label{thmasfclt}
Let $F$ satisfy \textup{(i)--(vi)}. Let $f\in\mathcal{B}(\mathcal{S}^{3})$
such that $Pf=0$, $Pf^{2}$ and $Pf^{4}$ exist and belong to $F$. We
assume that hypothesis \textup{(H1)} is fulfilled with $\alpha\in
(0,\frac{\sqrt[4]{8}}{2} )$. Then $M_{n}^{\Pi}(f)$ verifies
an ASFCLT, when $n$ goes to~$\infty$.
\end{theorem}
\begin{pf} We use Theorem \ref{thmchaabanetlcpsf}.
Let $(\mathcal{H}_{n})_{n\in\N}$ be the filtration defined as in
Section \ref{lil}. Then $(M_{n}^{\Pi}(f))$ is a $\mathcal{H}_{n}$
martingale. We have to check the hypotheses of Theorem \ref{thmchaabanetlcpsf}. For all $n\geq1$, let $V_{n}=s\sqrt{n}$ where
$s^{2}=(\mu,Pf^{2})$. Then according to Theorem \ref{thmslln},
\[
\frac{\langle
M^{\Pi}(f)\rangle_{n}}{V_{n}^{2}}=V_{n}^{-2}M_n^{\Pi}
\bigl(Pf^2\bigr)\underset{n\rightarrow\infty} {\longrightarrow} 1\qquad
\mbox{a.s.}
\]
Let $\varepsilon>0$. We have
\begin{eqnarray*}
&&
\sum_{n\geq
1}\frac{1}{V_{n}^{2}}\mathbb{E}
\bigl[f^{2}(\Delta_{\Pi(n)})\mathbf{1}_{ \{
\llvert f (\Delta_{\Pi(n)} )\rrvert>\varepsilon
V_{n} \}} /
\mathcal{H}_{n-1} \bigr]\\
&&\qquad\leq\frac{1}{\varepsilon^{2}s^{4}}\sum
_{n\geq
1}\frac{Pf^{4} (X_{\Pi(n)} )}{n^{2}} \qquad\mbox{a.s.}
\end{eqnarray*}
According to (v) and (vi), there exists a positive constant
$c$ such that for all $n\geq1$,
$\mathbb{E} [Pf^{4} (X_{\Pi(n)} ) ]=\nu
Q^{r_{n}}Pf^{4}\leq c$, and therefore, $\forall\varepsilon>0$
\[
\sum_{n\geq
1}\frac{1}{V_{n}^{2}}\mathbb{E}
\bigl[f^{2}(\Delta_{\Pi(n)})\mathbf{1}_{ \{
\llvert f (\Delta_{\Pi(n)} )\rrvert>\varepsilon
V_{n} \}} /
\mathcal{H}_{n-1} \bigr]<\infty\qquad\mbox{a.s.}
\]
Finally, we have
\[
\sum_{n\geq
1}\frac{1}{V_{n}^{4}}\mathbb{E}
\bigl[f^{4} (\Delta_{\Pi
(n)} )\mathbf{1}_{ \{
\llvert f (\Delta_{\Pi(n)} )\rrvert\leq
V_{n} \}} /
\mathcal{H}_{n-1} \bigr]\leq\frac{1}{s^{4}}\sum
_{n\geq
1}\frac{Pf^{4} (X_{\Pi(n)} )}{n^{2}}\qquad \mbox{a.s.},
\]
which as before is a.s. finite, and the proof is then complete.\vadjust{\goodbreak}
\end{pf}

\begin{remark}
As before, let us note that this result can be extended to the general
case $\alpha\in(0,1)$, but at the price of enormous computation
related to the computation of $2k$-order moments, $k>2$, for
$\overline{M}_{\mathbb{G}_{r}}(g)$, $g\in\mathcal{B}(S)$.
\end{remark}

\subsection{Deviation inequalities for BMC}\label{someprobaineq4bmc}

We are now going to give some deviation inequalities under
(i)--(vi) and (H1) for the empirical means
(\ref{empiricalmean2}) when $f\in\mathcal{B}(S)$ with $(\mu,f)=0$
and when $f\in\mathcal{B}(S^3)$ with $(\mu, Pf)=0$. This will help
us in the sequel to obtain a MDP
result in a general framework, that is, for functional of BMC with
unbounded test functions. Let us recall that the main disadvantage of
this ``weak'' set of assumptions is that the
range of speed for the MDP is very restricted. However, we still work
under geometric ergodicity
assumption and general test function, which will not be the
case when we would want to extend the MDP; see Section
\ref{expoprobaineq}. Note that we postpone to Appendix \ref{appendixA} nearly all
the proofs of this section, these proofs being quite long and technical.

\begin{theorem}\label{someprobaineq1} Let $F$ satisfy conditions
\textup{(i)--(vi)}. We
assume that \textup{(H1)} is fulfilled. Let $f\in F$ such that
$(\mu, f)=0$. Then we have for all $\delta>0$ and all $r\in\N$ and
all $n\in\N$,
%
\begin{eqnarray}
\label{probaineq1} \mathbb{P} \bigl( \bigl|\overline{M}_{\mathbb{G}_{r}}(f)
\bigr|>\delta
\bigr)&\leq&\cases{\displaystyle \frac{c}{\delta^{2}} \biggl(\frac{1}{2}
\biggr)^{r}, &\quad  if $\displaystyle \alpha^{2}<\frac{1}{2}$;
\vspace*{2pt}\cr
\displaystyle \frac{c}{\delta^{2}}r \biggl(\frac{1}{2} \biggr)^{r}, &\quad  if $\displaystyle
\alpha^{2}=\frac{1}{2}$;
\vspace*{2pt}\cr
\displaystyle \frac{c}{\delta^{2}}
\alpha^{2r}, &\quad  if $\displaystyle \alpha^{2}>\frac{1}{2}$;}
\\
\label{probaineq2} \mathbb{P} \bigl( \bigl|\overline{M}{}^{\Pi}_{n}(f)
\bigr|>\delta\bigr)&\leq&\cases{\displaystyle \frac{c}{\delta^{2}} \biggl(\frac{1}{2}
\biggr)^{r_n+1}, &\quad  if $\displaystyle \alpha^{2}<\frac{1}{2}$;
\vspace*{2pt}\cr
\displaystyle \frac{c}{\delta^{2}} r_{n} \biggl(\frac{1}{2}
\biggr)^{r_{n}+1}, &\quad  if $\displaystyle \alpha^{2}=\frac{1}{2}$;
\vspace*{2pt}\cr
\displaystyle \frac{c}{\delta^{2}}\alpha^{2(r_{n}+1)}, &\quad  if $\displaystyle \alpha^{2}>
\frac{1}{2}$;}
\end{eqnarray}
and
%
\begin{equation}
\label{probaineq3} \mathbb{P} \bigl( \bigl|\overline{M}_{\mathbb{T}_{r}}(f)
\bigr|>\delta
\bigr)\leq\cases{\displaystyle \frac{c}{\delta^{2}} \biggl(\frac{1}{2}
\biggr)^{r+1}, &\quad  if $\displaystyle \alpha^{2}<\frac{1}{2}$;
\vspace*{2pt}\cr
\displaystyle \frac{c}{\delta^{2}}r \biggl(\frac{1}{2} \biggr)^{r+1}, &\quad  if $\displaystyle
\alpha^{2}=\frac{1}{2}$;
\vspace*{2pt}\cr
\displaystyle \frac{c}{\delta^{2}}
\alpha^{2(r+1)}, &\quad  if $\displaystyle \alpha^{2}>\frac{1}{2}$;}
\end{equation}
where the positive constant $c$ depends on $f$ and $\alpha$ and may
differ term by term.
\end{theorem}

\begin{pf} See Section \ref{proofsomeprobaineq1} in Appendix \ref{appendixA}.
\end{pf}

We shall also need an extension of Theorem \ref{someprobaineq1} to
the case when $f$ does not only depend on an individual $X_i$, but
on the mother-daughters triangle~$(\Delta_i)$.
%
\begin{theorem}\label{someprobaineq2} Let $F$ satisfy conditions
\textup{(i)--(vi)}. We assume that \textup{(H1)}
is fulfilled. Let $f\in\mathcal{B} (S^{3} )$ such that $Pf$
and $Pf^{2}$ exist and belong to $F$ and $(\mu, Pf)=0$. Then we have
the same conclusion as in Theorem \ref{someprobaineq1} for the
three empirical averages given in (\ref{empiricalmean2}):
$\overline{M}_{\mathbb{G}_{r}}(f)$,
$\overline{M}_{\mathbb{T}_{r}}(f)$ and $\overline{M}{}^{\Pi}_{n}(f)$.
\end{theorem}

\begin{pf} See Section \ref{proofsomeprobaineq2} in Appendix \ref{appendixA}.
\end{pf}

We thus have the following first result on the superexponential
convergence in probability, whose definition we present now:
%
\begin{definition}\label{exponentialdefini}
Let $(E,d)$ a metric space. Let $(Z_{n})$ be a sequence of random
variables valued in $E$, $Z$ be a random variable valued in $E$ and
$(v_{n})$ be a rate. We say that $Z_n$ converges
$v_n$-superexponentially fast in probability to $Z$ if for all
$\delta>0$,
\[
\limsup_{n\rightarrow\infty}\frac{1}{v_n}\log\P\bigl(d(Z_n,Z)>
\delta\bigr)=-\infty.
\]
This ``exponential convergence'' with speed $v_n$ will be shortened as
\[
Z_n \stackrel{\mathrm{superexp}} {\underset{v_n} {
\longrightarrow}} Z.
\]
\end{definition}
We may now set:

\begin{prop}\label{expoconv1}
Let $F$ satisfy conditions \textup{(i)--(vi)}. Let $f\in
\mathcal{B} (S^{3} )$ such that $Pf$ and $Pf^{2}$ exist and
belong to $F$ and $(\mu, Pf)=0$. We assume that \textup{(H1)} is
fulfilled. Let $(b_{n})$ be a sequence of increasing positive real
numbers such that
%
\begin{equation}
\label{vb}\quad\frac{b_{n}}{\sqrt{n}} \longrightarrow+\infty,\qquad \frac
{b_{n}}{\sqrt{n\log
n}}
\longrightarrow0,\qquad \frac{n}{b_{n}}\mbox{ is nondecreasing}.
\end{equation}
Then
\[
\overline{M}{}^{\Pi}_{n}(f)\stackrel{\mathrm{superexp}}{\underset{{b_n^2}/{n}}
{\longrightarrow}} 0.
\]
\end{prop}
\begin{pf} The proof is a direct consequence of Theorem \ref{someprobaineq2}.
\end{pf}


\subsection{Moderate deviations for BMC}

Now, using the MDP for martingale (see, e.g., \cite{Djellout,TheseWorms}), we are going to prove
a MDP for BMC. We will use Proposition~\ref{propmdp}, in Appendix
\ref{appendixB}.\vadjust{\goodbreak}
%
\begin{theorem}\label{thmmdp}
Let $F$ satisfy conditions \textup{(i)--(vi)}. We assume that \textup{(H1)} is
satisfied. Let $f\in\mathcal{B} (S^{3} )$ such that $Pf^{2}$
and $Pf^{4}$ exist and belong to $F$. Assume that $Pf=0$. Let
$(b_{n})$ be a sequence of increasing positive real numbers
satisfying (\ref{vb}). If
%
\begin{equation}
\label{cond-led}\qquad \limsup_{n\rightarrow
\infty}\frac{n}{b_{n}^{2}}\log\bigl(n\underset{1\leq k\leq c^{-1}(b_{n+1})}
{\operatorname{ess}\operatorname{sup}}\mathbb{P}
\bigl( \bigl|f (\Delta_{\Pi(k)} ) \bigr|>b_{n} /{\mathcal{H}_{k-1}}
\bigr) \bigr) = -\infty,
\end{equation}
where $c^{-1}(b_{n+1}):=\inf\{k\in\N\dvtx  \frac{k}{b_{k}}\geq
b_{n+1} \}$, then $ (M^{\Pi}_{n}(f)/b_n )$ satisfies a MDP
in $\mathbb{R}$ with the speed $b_{n}^{2}/n$ and the rate function
$ I(x)=\frac{x^{2}}{2(\mu,Pf^{2})}$.
\end{theorem}
\begin{pf} First, note that under the hypothesis, $M_{n}^{\Pi}(f)$ is
a $\mathcal{H}_{n}$-martingale, with $\mathcal{H}_{0}=\sigma(X_{1})$
and $\mathcal{H}_{n}=\sigma(\Delta_{\Pi(i)}, \Pi(i+1), 1\leq
i\leq n )$. From Proposition \ref{propmdp} in Appendix \ref
{appendixB}, we only have to check conditions (C1) and
(C3).

On one hand, (\ref{probaineq2}) applied to $Pf^4-(\mu,Pf^4)$ implies
that for all $\delta>0$,
\[
\limsup_{n\rightarrow\infty}\frac{n}{b_{n}^{2}} \log\mathbb{P} \Biggl(
\frac{1}{n}\sum_{i=1}^{n}Pf^{4}
(X_{\Pi(i)})>\bigl(\mu,Pf^{4}\bigr) + \delta\Biggr) = -\infty,
\]
and this implies the exponential Lindeberg condition (see, e.g.,
\cite{TheseWorms}), that is, condition (C3).

On the other hand, we have $\langle
M^{\Pi}(f)\rangle_{n}=M_{n}^{\Pi}(Pf^{2})$ and (\ref{probaineq2})
applied to $Pf^2-(\mu,Pf^2)$ implies that
\[
\overline{M}{}^{\Pi}_{n} \bigl(Pf^{2}-\bigl(
\mu,Pf^{2}\bigr) \bigr) \underset{{b_n^2}/{n}} {\stackrel{\mathrm
{superexp}} {\longrightarrow}} 0,
\]
that is, condition (C1).
\end{pf}
%
\begin{remark}
One of the main difficulties in the application of this Theorem lies
in the verification of (\ref{cond-led}). Note, however, that in the
range of speed considered it is sufficient to have some uniform
control in $X_i$ of some moment of $f(X_i,X_{2i},X_{2i+1})$
conditionally on $X_i$, which leads to condition of the type
$P|f|^k$ bounded for some $k\ge2$. It is, of course, the case if $f$
is bounded.
\end{remark}
%
\begin{remark}\label{remarkmdpcumulant}
In the special case of model (\ref{bar11}), we have (see Section~\ref{Application}), for $f$ such that $Pf=0$ and for all $k$,
\[
\mathbb{E} \biggl[\exp\biggl(\lambda\frac{b_{n}}{n} f (\Delta_{\Pi(k)}
) \biggr) \Big/\mathcal{H}_{k-1} \biggr] =\exp\biggl(\frac{b_{n}^{2}}{n}
\biggl(\frac{\lambda^{2}
Pf^{2}}{2n} \biggr) (X_{\Pi(k)} ) \biggr).
\]
This condition implies that a MDP is satisfied for
$(M^{\Pi}_{n}(f)/b_n)$. Indeed, if this relation is satisfied, we
then have that for $\lambda\in\mathbb{R}$ the quantity
\[
G_{n}(\lambda)=\frac{\lambda^{2}}{2n}\sum_{k=1}^{n}Pf^{2}(X_{\Pi(k)})=
\frac{\lambda^{2}}{2}\overline{M}{}^{\Pi
}_n\bigl(Pf^2
\bigr)
\]
is an upper and lower cumulant (see, e.g., \cite{TheseWorms}), and
we may apply G\"artner--Ellis-type methodology.
In addition, due to (\ref{probaineq2}) applied to $Pf^2-(\mu,Pf^2)$,
we have for $\lambda\in\mathbb{R}$,
\[
G_{n}(\lambda)\underset{{b_n^2}/{n}} {\stackrel{\mathrm
{superexp}} {\longrightarrow}} \frac{\lambda
^{2}(\mu,Pf^{2})}{2},
\]
which implies that $ (M^{\Pi}_{n}(f)/b_n )$ satisfies a MDP in
$\mathbb{R}$ with the speed $b_{n}^{2}/n$ and the rate function
$ I(x)=\frac{x^{2}}{2(\mu,Pf^{2})}$.
\end{remark}
%

\section{Exponential deviation inequalities for BMC and
consequences}\label{expoprobaineq}

We give here stronger deviation inequalities than the one obtained
in Section \ref{momentscontrol}, namely exponential deviation
inequalities. Of course, it requires more stringent assumptions.

\subsection{Exponential deviation inequalities}

Let us consider the following hypothesis.

\begin{longlist}[(H2)]
\item[(H2)]
There exists a probability $\mu$ on $(S,\mathcal{S})$
such that, for all $f\in\mathcal{B}_{b}(S)$ with $(\mu,f)=0$,
there exists a positive constant $c$ such that
\[
\bigl|Q^{r}f(x)\bigr|\leq c\alpha^{r} \qquad\mbox{for some } \alpha\in
(0,1) \mbox{ and for all } x\in S.
\]
\end{longlist}
One can easily check that, under hypothesis (H2),
$\mathcal{B}_{b}(S)$ fulfills hypothesis (i)--(vi) of the previous
section.

Under this assumption, we will prove exponential deviation
inequalities for $\overline{M}_{\mathbb{G}_{r}}(f)$,
$\overline{M}_{\mathbb{T}_{r}}(f)$ and $\overline{M}{}^{\Pi}_{n}(f)$
when $f\in\mathcal{B}_{b}(S)$ with $(\mu, f)=0$ [resp., $f\in
\mathcal{B}_{b} (S^{3} )$ with $(\mu, Pf)=0$].
%
\begin{theorem}\label{expoprobaineq1} Assume that \textup{(H2)} is
satisfied. Let $f\in\mathcal{B}_{b}(S)$ such that $(\mu, f)=0$.
Then we have for all $\delta
>0$,
%
\begin{eqnarray}
\label{expoineq1}
&&\mathbb{P} \bigl(\overline{M}_{\mathbb
{G}_{r}}(f)>\delta\bigr)\nonumber\\[-8pt]\\[-8pt]
&&\qquad\leq\cases{\displaystyle  \exp\bigl(c''\delta\bigr)\exp
\bigl(-c'\delta^{2}|\mathbb{G}_{r}| \bigr),\cr
\qquad\forall r\in\N, &\quad if $\displaystyle \alpha\leq\frac{1}{2}$,
\cr
\displaystyle
\exp
\bigl(-c'\delta^{2}|\mathbb{G}_{r}| \bigr),\cr
\qquad\forall r \in\N\mbox{ such that }r> r_0, &\quad if
$\displaystyle \frac{1}{2}<\alpha<\frac{\sqrt{2}}{2}$,
\cr
\displaystyle
\exp\biggl(-c'
\delta^{2}\frac{|\mathbb{G}_{r}|}{r} \biggr),\cr
\qquad\forall r\in\N
\mbox{ such that }r>r_0, &\quad if $\displaystyle \alpha^{2}=\frac{1}{2}$,
\cr
\displaystyle
\exp\biggl(-c'\delta^{2}
\frac{1}{\alpha^{2r}} \biggr),\cr
\qquad\forall r \in\N
\mbox{ such that }r>r_0, &\quad if $\displaystyle
\alpha^{2}>\frac{1}{2}$,}\nonumber
\\
\label{expoineq2}
&&\mathbb{P} \bigl(\overline{M}_{\mathbb
{T}_{r}}(f)>\delta\bigr)\nonumber\\[-8pt]\\[-8pt]
&&\qquad
\leq\cases{\displaystyle  \exp\bigl(c''\delta\bigr)\exp
\bigl(-c'\delta^{2}|\mathbb{T}_{r}| \bigr),\cr
\qquad\forall r\in\N, &\quad if
$\displaystyle \alpha<\frac{1}{2}$,
\cr
\displaystyle
\exp\bigl(2c'
\delta(r+1) \bigr)\exp\bigl(-c'\delta^{2}|\mathbb
{T}_{r}| \bigr),\cr
\qquad\forall r\in\N, &\quad if  $\displaystyle \alpha=\frac{1}{2}$,
\cr
\displaystyle
\exp\bigl(-c'\delta^{2}|\mathbb{T}_{r}|
\bigr),\cr
\qquad\forall r \in\N\mbox{ such that }r>r_0-1, &\quad if  $\displaystyle
\frac{1}{2}<\alpha<\frac{\sqrt{2}}{2}$,
\cr
\displaystyle
\exp\biggl(-c'
\delta^{2}\frac{|\mathbb{T}_{r}|}{r+1} \biggr),\cr
\qquad\forall r\in\N\mbox{ such that }
r>r_0-1, &\quad if $\displaystyle \alpha=\frac{\sqrt{2}}{2}$,
\cr
\displaystyle
\exp
\biggl(-c'\delta^{2}\frac{1}{\alpha^{2(r+1)}} \biggr),\cr
\qquad\forall r\in\N^{*}\mbox{ such that }r>r_0-3, &\quad if $\displaystyle \alpha>
\frac{\sqrt{2}}{2}$,}\nonumber
\end{eqnarray}
and
%
\begin{eqnarray}
\label{expoineq3}
&&
\mathbb{P} \bigl(\overline{M}{}^{\Pi}_{n}(f)>
\delta\bigr)\nonumber\\[-8pt]\\[-8pt]
&&\qquad\leq\cases{\displaystyle  \exp\bigl(c''\delta\bigr)
\exp\bigl(-c'\delta^{2}n \bigr),\cr
\qquad\forall n\in\N, &\quad if
$\displaystyle \alpha<\frac{1}{2}$,
\cr
\displaystyle
\exp\bigl(2c'\delta(r_{n}+1)
\bigr)\exp\bigl(-c'\delta^{2}n \bigr),\cr
\qquad\forall n\in
\N, &\quad if $\displaystyle \alpha=\frac{1}{2}$,
\cr
\displaystyle
\exp\bigl(-c'
\delta^{2}n \bigr),\cr
\qquad\forall n\in\N\mbox{ such that }r_{n}>r_0,
&\quad if  $\displaystyle \frac{1}{2}<\alpha<\frac{\sqrt{2}}{2}$,
\cr
\displaystyle
\exp\biggl(-c'
\delta^{2}\frac{n}{r_{n}+1} \biggr),\cr
\qquad\forall n\in\N\mbox{ such that }
r_{n}>r_0, &\quad if $\displaystyle \alpha=\frac{\sqrt{2}}{2}$,
\cr
\displaystyle
\exp
\biggl(-c'\delta^{2}\frac{1}{\alpha^{2(r_{n}+1)}} \biggr),\cr
\qquad\forall n\in\N^{*}\mbox{ such that }r_{n} >r_0-2, &\quad if
$\displaystyle \alpha>\frac{\sqrt{2}}{2}$,}\nonumber
\end{eqnarray}
where $r_0:= \log(\frac{\delta}{c_{0}} )/\log(\alpha)$,
and $c_{0}$, $c'$ and $c''$ are positive constants which depend on
$\alpha$ and $f$, and differ line by line; see the proofs for the
dependence.
\end{theorem}
\begin{pf} The details of the proof are in Section \ref
{proofexpoprobaineq1} in Appendix \ref{appendixA}. It relies
mainly on successive conditioning, using carefully the uniform
geometric ergodicity assumption to get rid of the conditioning.
\end{pf}

The condition about $\alpha$ less than $1/2$ or greater is of course
linked to the binary structure of the tree. The extension to $m$-ary
tree will follow from the same ideas.

%
\begin{theorem}\label{expoprobaineq2} Assume that
\textup{(H2)} is satisfied. Let $f\in\mathcal{B}_{b} (S^{3} )$
such that $(\mu, Pf)=0$. Then we have the same conclusions, for
the three empirical averages $\overline{M}_{\mathbb{G}_{r}}(f)$,
$\overline{M}{}^{\Pi}_{n}(f)$ and $\overline{M}_{\mathbb{T}_{r}}(f)$,
as in the Theorem \ref{expoprobaineq1}.
\end{theorem}

\begin{pf} See Section \ref{proofexpoprobaineq2} in Appendix \ref{appendixA}.
\end{pf}


Now, using the Borel--Cantelli Theorem and (\ref{expoineq3}), we state
easily the following:

\begin{corollary}\label{corslln2} Assume that
\textup{(H2)} is satisfied. Let $f\in\mathcal{B}_{b}(S)$ such that
$(\mu,f)=0$ [resp., $f\in\mathcal{B}_{b}(S^{3})$ and $(\mu,Pf)=0$].
Then $\overline{M}{}^{\Pi}_{n}(f)$ almost surely converges to $0$ as
$n$ goes to $\infty$.
\end{corollary}

\begin{remark}
Of course uniform ergodicity and bounded test functions are surely
a very strong set of assumptions, but it is not so difficult to
verify if the Markov chain's daughters lie in a compact set. We are
convinced that it is possible to consider the geometric ergodic case
and bounded test functions, but for the price of tedious calculations
that we will pursue in an other work. We will also investigate the
use of transportation inequalities, leading to deviation inequality
for Lipschitz test functions under some Wasserstein contraction
property for the kernel $P$, in the spirit of the Theorems 2.5 or
2.11 in \cite{DGW04}.
\end{remark}
%
\subsection{Moderate deviation principle for BMC}

We introduce the following assumption on the speed of the MDP.
%
\begin{hyp}
\label{h1} Let $(b_{n})$ be an increasing sequence of positive real
numbers such that
\[
\frac{b_{n}}{\sqrt{n}}\longrightarrow+\infty
\]
and:
\begin{itemize}
\item if $\alpha^{2}<\frac{1}{2}$, the sequence $(b_{n})$ is such
that $ b_{n}/n\longrightarrow0$;\vspace*{1pt}

\item if $\alpha^{2}=\frac{1}{2}$, the sequence $(b_{n})$ is such
that $ (b_{n}\log
n)/n\longrightarrow0$;\vspace*{1pt}

\item if $\alpha^{2}>\frac{1}{2}$, the sequence $(b_{n})$ is such that
$ (b_{n}\alpha^{r_{n}+1})/\sqrt{n}\longrightarrow0$.\vadjust{\goodbreak}
\end{itemize}
\end{hyp}
Using the MDP for martingale with bounded
jumps (see, e.g., \cite{Dembo,Djellout}), we can now state the
following:
%
\begin{theorem}\label{thmmdp2} Assume that
\textup{(H2)} is satisfied. Let $f\in\mathcal{B}_{b}(S^{3})$ such that
$Pf=0$. Let $(b_n)$ be a sequence of real numbers satisfying the
Assumption \ref{h1}; then $ (M_{n}^{\Pi}(f)/b_n )$ satisfies a
MDP in $S$ with the speed $b_{n}^{2}/n$ and rate function
$ I(x)=\frac{x^{2}}{2(\mu,Pf^{2})}$.
\end{theorem}

\begin{pf}
The proof easily follows from the previous exponential probability
inequalities and the MDP for martingale
with bounded jumps; see, for example, \cite{Dembo,Djellout,TheseWorms}.
\end{pf}

\begin{remark}
Taking particularly $n=|\mathbb{T}_{r}|$ and $(b_n)$ as a sequence
of real numbers satisfying Assumption \ref{h1}, we get that for all
$f\in\mathcal{B}_{b}(S^{3})$,
$ (M_{\mathbb{T}_{r}}(f)/\break b_{|\mathbb{T}_{r}|} )$ satisfies a
MDP in $\mathbb{R}$ with the speed
$b_{|\mathbb{T}_{r}|}^{2}/|\mathbb{T}_{r}|$ and the rate function
$I(x)=\frac{x^{2}}{2(\mu,Pf^{2})}$.
\end{remark}


\section{Application: First order Bifurcating autoregressive
processes}\label{Application}

In this section, we seek to apply the results of the previous
sections to the following bifurcating autoregressive process with
memory 1
defined by
%
\begin{equation}
\label{bar1} \mathcal{L}(X_{1})=\nu
\quad\mbox{and}\quad \forall n\geq1\qquad \cases{\displaystyle
X_{2n}=\alpha_{0}X_{n} + \beta_{0}
+ \varepsilon_{2n},
\cr
\displaystyle
X_{2n+1} = \alpha_{1}X_{n}
+ \beta_{1} + \varepsilon_{2n+1},}
\end{equation}
where $\alpha_{0}, \alpha_{1}\in(-1,1)$; $\beta_{0}, \beta_{1}\in
\mathbb{R}$, $ ((\varepsilon_{2n}, \varepsilon_{2n+1}), n\geq
1 )$ forms a sequence of i.i.d. bivariate random variables and $\nu
$ a
probability measure on $\mathbb{R}$.

Several extensions of the model have been proposed and various
estimators are studied in the literature for the unknown parameters;
see, for instance, \cite{BZ4,BH99,BH00,BHY09,BZ105,BZ205}. See
\cite{BerSapGeg} for a relevant references.

Throughout this section, we assume that the distribution $\nu$ has
finite moments of all orders.

In the sequel, we will study (\ref{bar1}) in two settings:
\begin{itemize}
\item the Gaussian setting which corresponds to the case where $
((\varepsilon_{2n}, \varepsilon_{2n+1}), n\geq1 )$
forms a sequence of i.i.d. bivariate random variables with law
$\mathcal{N}_{2}(0,\Gamma)$ with
%
\begin{equation}
\label{matrixcov} \Gamma=\sigma^{2}\pmatrix{ 1 & \rho
\cr
\rho& 1},\qquad \sigma^{2}>0,\qquad \rho\in(-1,1);
\end{equation}

\item the bounded setting which corresponds to the case where
$X_{1}$ and $ ((\varepsilon_{2n},\break  \varepsilon_{2n+1})$, $n\geq
1 )$, which forms a sequence of centered i.i.d. bivariate random
variables, take their values in a compact set. Let us note that in this case,
$(X_{n},n\in\mathbb{T})$ takes its values in a compact set.
\end{itemize}
Our main goal is to give deviation inequalities and MDP for the
estimator of
the 4-dimensional unknown parameter
$\theta=(\alpha_{0},\beta_{0},\alpha_{1},\beta_{1})$ and for the
statistical
test defined in \cite{Guyon}.

To estimate the 4-parameter
$\theta=(\alpha_{0},\beta_{0},\alpha_{1},\beta_{1})$, as well as
$\sigma^{2}$ and $\rho$, assume we observe a complete subtree
$\mathbb{T}_{r+1}$. The least square estimator
$\hat{\theta}^{r}= (\hat{\alpha}_{0}^{r}, \hat{\beta}_{0}^{r},
\hat{\alpha}_{1}^{r}, \hat{\beta}_{1}^{r} )$ of $\theta$ is given
by (see \cite{Guyon}), for $\eta\in\{0,1\}$,
%
\begin{equation}
\label{estmaparam} \cases{\displaystyle  \hat{\alpha}_{\eta}^{r}=
\frac{|\mathbb{T}_{r}|^{-1}\sum_{i\in
\mathbb{T}_{r}}X_{i}X_{2i+\eta}- (|\mathbb{T}_{r}|^{-1}\sum_{i\in\mathbb
{T}_{r}}X_{i}
) (|\mathbb{T}_{r}|^{-1}\sum_{i\in\mathbb
{T}_{r}}X_{2i+\eta} )} {
|\mathbb{T}_{r}|^{-1}\sum_{i\in\mathbb{T}_{r}}X_{i}^{2}-
(|\mathbb{T}_{r}|^{-1}\sum_{i\in\mathbb
{T}_{r}}X_{i} )^{2}},
\cr
\displaystyle
\hat{\beta}_{\eta}^{r} = |
\mathbb{T}_{r}|^{-1}\sum_{i\in\mathbb{T}_{r}}X_{2i+\eta}-
\hat{\alpha}_{\eta}^{r}|\mathbb{T}_{r}|^{-1}
\sum_{i\in
\mathbb{T}_{r}}X_{i}. }\hspace*{-35pt}
\end{equation}
Notice that in the Gaussian case, this least square estimator
corresponds to the maximum likelihood estimator.

We also need to introduce the estimators of the conditional variance
$\sigma^{2}$ and the
conditional sister--sister correlation $\rho$. These estimators are
naturally given by
%
\begin{equation}
\label{estimcov} \cases{\displaystyle  \hat{\sigma}_{r}^{2}=
\frac{1}{2\mathbb{T}_{r}}\sum_{i\in
\mathbb{T}_{r}}\bigl(\hat{
\varepsilon}_{2i}^{2}+\hat{\varepsilon}_{2i+1}^{2}
\bigr),
\cr
\displaystyle
\hat{\rho}_{r}=\frac{1}{\hat{\sigma}_{r}^{2}}\sum
_{i\in
\mathbb{T}_{r}}\hat{\varepsilon}_{2i} \hat{
\varepsilon}_{2i+1},}
\end{equation}
where the residues are defined by
$\hat{\varepsilon}_{2i+\eta}=X_{2i+\eta}-\hat{\alpha}^{r}_{\eta
}X_{i}-\hat{\beta}_{\eta}^{r}$,
with $\eta\in\{0,1\}$.

Let us denote by $\mathcal{C}_{\mathrm{pol}}(\mathbb{R})$ [resp.,
$\mathcal{C}_{\mathrm{pol}}(\mathbb{R}^{3})$] the set of all continuous
functions $f\dvtx \R\rightarrow\R$ (resp., $f\dvtx \R^{3}\rightarrow\R$) such
that $|f|$ is bounded above by a polynomial. From \cite{Guyon}, we
know that $\mathcal{C}_{\mathrm{pol}}(\mathbb{R})$ fulfills hypotheses
(i)--(vi).

We will take $F=\mathcal{C}_{\mathrm{pol}}^{1}(\mathbb{R})$ the set of all
$\mathcal{C}^{1}$ functions $f\dvtx \R\rightarrow\R$ such that $|f|+|f'|$
is bounded above by a polynomial. Then, one can check that $F$
fulfills hypotheses (i)--(vi). Moreover, for all $f\in F$, hypothesis
(H1) holds with $\alpha=\max(|\alpha_{0}|,|\alpha_{1}|)$.
Let $\mu$ be the unique stationary distribution of the induced
Markov chain $(Y_{r}, r\in\mathbb{N})$; see \cite{Guyon} for more
details.

Let us denote by $\mathcal{C}_{\mathrm{pol}}^{1}(\R^{3})$ the set of all
$\mathcal{C}^{1}$ functions $f\dvtx \R^{3}\rightarrow\R$ such that
$|f|+|f'|$ is bounded\vspace*{1pt} above by a polynomial. We shall denote by
$\mathbf{x}$ (resp., $\mathbf{x}^{2}$, $\mathbf{xy}$, $\mathbf{y},
\ldots$) the element of $\mathcal{C}_{\mathrm{pol}}^{1}(\mathbb{R}^{3})$
defined by $(x,y,z)\mapsto x$ (resp., $x^{2}$, $xy$, $y,\ldots$).

We define two continuous functions $\mu_1\dvtx  \Theta\rightarrow\R$ and
$\mu_2\dvtx
\Theta\times\R_+^*\rightarrow\R$ by writing
%
\begin{equation}
\label{mu} (\mu, \mathbf{x})=\mu_{1}(\theta) \quad\mbox{and}\quad \bigl(\mu,
\mathbf{x}^{2}\bigr) = \mu_{2}\bigl(\theta,
\sigma^{2}\bigr),
\end{equation}
where
$\theta=(\alpha_{0},\beta_{0},\alpha_{1},\beta_{1})\in\Theta
=(-1,1)\times
\R\times(-1,1)\times\R$.

To segregate between $H_0=\{(\alpha_0,\beta_0)=(\alpha_1,\beta_1)\}
$ and its
alternative $H_1=\{(\alpha_0,\beta_0)\not=(\alpha_1,\beta_1)\}$,
we shall use the test statistic
\[
\chi_{r}^{(1)}=\frac{|\mathbb{T}_{r}|}{2\hat{\sigma
}_{r}^{2}(1-\hat\rho_{r})} \bigl\{\bigl(\hat{
\alpha}_{0}^{r}-\hat{\alpha}_{1}^{r}
\bigr)^{2} \bigl(\hat{\mu}_{2,r}-\hat{\mu}_{1,r}^{2}
\bigr)+ \bigl(\bigl(\hat{\alpha}_{0}^{r}-\hat{
\alpha}_{1}^{r}\bigr)\hat{\mu}_{1,r}+\hat{
\beta}_{0}^{r} - \hat{\beta}_{1}^{r}
\bigr)^{2} \bigr\},
\]
where we write $\hat{\mu}_{1,r}=\mu_{1}(\hat{\theta}^{r})$ and
$\hat{\mu}_{2,r}=\mu_{2}(\hat{\theta}_{r},\hat{\sigma}_{r})$.

As usual the Gaussian setting has specific properties that allow
easier calculations and more general assumptions.

\subsection{The Gaussian setting}

We introduce the following assumption on the speed of the MDP.
Let $(b_{n})$ be an increasing sequence of positive real numbers such that
%
\begin{equation}
\label{hh2}\frac{b_{n}}{\sqrt{n}}\longrightarrow+ \infty
\quad\mbox{and}\quad
\frac{b_{n}}{\sqrt{n\log n}}\rightarrow0.
\end{equation}

\begin{prop}\label{expoconvtheta}
Let $(b_{n})$ be a sequence of real numbers satisfying (\ref{hh2}). Then
\[
\hat{\theta}^{r}\underset{{b_{|\mathbb{T}_{r}|}^{2}}/{|\mathbb
{T}_{r}|}} {\stackrel{
\mathrm{superexp}} {\longrightarrow}} \theta.
\]
\end{prop}

\begin{pf}
We will treat the case of $\hat{\alpha}_{0}^{r}$ given in (\ref
{estmaparam}). The others,
$\hat{\beta}_{0}^{r},\hat{\alpha}_{1}^{r}$ and $\hat{\beta
}_{1}^{r}$, given in (\ref{estmaparam}), may be
treated in a similar way. Note that $ \hat{\alpha
}_{0}^{r}=\frac{C_{r}}{B_{r}}$, where
\[
C_{r}=\overline{M}_{\mathbb{T}_{r}}(\mathbf{xy})-\overline
{M}_{\mathbb{T}_{r}}(\mathbf{x}) \overline{M}_{\mathbb{T}_{r}}(\mathbf
{y})
\quad\mbox{and}\quad B_{r}=\overline{M}_{\mathbb{T}_{r}}\bigl(
\mathbf{x}^{2}\bigr)-\overline{M}_{\mathbb{T}_{r}}(
\mathbf{x})^{2}.
\]

Now, using Lemma \ref{exponentialmapping} and Proposition \ref
{expoconv1}, it follows that
\[
\hat{\alpha}_{0}^{r}\underset{{b_{|\mathbb
{T}_{r}|}^{2}}/{|\mathbb{T}_{r}|}}
{\stackrel{\mathrm{superexp}} {\longrightarrow}} \alpha_{0}.
\]
\upqed\end{pf}
%
We recall that in the BAR model (\ref{bar1}), we use
$\alpha=\max\{|\alpha_{0}|, |\alpha_{1}|\}$, and $b:=
\mu_{2}(\theta,\sigma^{2})-\mu_{1}(\theta)^{2}$, where $\mu_1$
and $\mu_2$ are given in (\ref{mu}), so we have the following
deviation inequality:
%
\begin{prop}\label{devineqtheta} For all $\delta>0$, for all $r\in
\N$ and for all $\gamma< \min(\frac{c_{1}b}{1+\delta},\allowbreak
\frac{c_{1}b}{1+\sqrt{\delta}},
\frac{c_{1}b}{1+\sqrt[4]{\delta}} )$, where $c_{1}$ is a
positive constant which depends on $\mu_{1}$, we have
%
\begin{equation}
\label{devineqthetha1} \mathbb{P} \bigl(\bigl\llVert\hat{\theta}^{r}-
\theta\bigr\rrVert>\delta\bigr)\leq\cases{\displaystyle  \frac{c}{\gamma^{4q}\delta
^{4-p}} \biggl(
\frac{1}{4} \biggr)^{r+1}, &\quad  if $\displaystyle \alpha^{2}<
\frac{1}{2}$,
\cr
\displaystyle
\frac{c}{\gamma^{4q}\delta^{4-p}}r^{2} \biggl(
\frac{1}{4} \biggr)^{r+1} &\quad  if $\displaystyle \alpha^{2}=
\frac{1}{2}$,
\cr
\displaystyle
\frac{c}{\gamma^{4q}\delta^{4-p}}\alpha^{4(r+1)}, &\quad  if $\displaystyle
\alpha^{2}>\frac{1}{2}$, }
\end{equation}
where the constant $c$ depends on $\alpha$, $\mu_{1}$,
$\mu_{2}$ and differs line by line, $p=p(\delta)\in\{0,2,4\}$ and
$q=q(\delta)\in\{0,1\}$.
\end{prop}

\begin{remark}
The values of $p$ and $q$ in Proposition \ref{devineqtheta} depend
on the order of $\delta$. For example, if $\delta$ is small enough,
we have $p=0$ and $q=0$.
\end{remark}

%
\begin{pf}
See Section \ref{proofdevineqtheta} in Appendix \ref{appendixA}.
\end{pf}

\begin{remark}
Proposition \ref{devineqtheta} can be improved by calculating
the $2k$th order moments, with $k>2$, as in the proof of Theorem
\ref{4ordermomentcontrol}. But, as we have said, this comes at the
price of enormous computation.
\end{remark}
%
%
\begin{prop}\label{expoconvsigmarho} Let
$(b_{n})$ be a sequence of real numbers satisfying (\ref{hh2}). Then
\[
\bigl(\hat{\sigma}_{r}^{2}, \hat{\rho}_{r}
\bigr)\underset{{b_{|\mathbb
{T}_{r}|}^{2}}/{|\mathbb{T}_{r}|}} {\stackrel{\mathrm{superexp}} {
\longrightarrow}} \bigl(\sigma^{2}, \rho\bigr).
\]
\end{prop}

\begin{pf} Let us first deal with $\sigma_{r}^{2}$ given in (\ref
{estimcov}). We have (see, e.g., \cite{Guyon})
\[
\hat{\sigma}_{r}^{2}=\tfrac{1}{2}
\overline{M}_{\mathbb
{T}_{r}}\bigl(f(\cdot,\theta)\bigr)+D_{r},
\]
where $ f(x,y,z,\theta)=(y-\alpha_{0}x-\beta
_{0})^{2}+(z-\alpha_{1}x-\beta_{1})^{2}$ and
\[
D_r=\frac{1}{2 |\mathbb{T}_{r}|}\sum_{i\in\mathbb{T}_{r}
}
\bigl(f\bigl(\Delta_i,\hat{\theta}^r\bigr)-f(
\Delta_i,{\theta})\bigr).
\]
By the
Taylor--Lagrange formula, we can find $g\in
\mathcal{C}_{\mathrm{pol}}(\mathbb{R}^{3})$ such that (see~\cite{Guyon})
\[
|D_{r}|\leq\tfrac{1}{2}\bigl\|\hat{\theta}^{r}-\theta\bigr\|
\bigl(1+\|\theta\|+\bigl\|\hat{\theta}^{r}-\theta\bigr\| \bigr)\overline{M}
_{\mathbb{T}_{r}}(g).
\]
Now, Propositions \ref{expoconv1} and
\ref{expoconvtheta} lead us to
\[
\hat{\sigma}_{r}^{2}\underset{{b_{|\mathbb
{T}_{r}|}^{2}}/{|\mathbb{T}_{r}|}}
{\stackrel{\mathrm{superexp}} {\longrightarrow}} \sigma^{2}.
\]
The proof for $\hat{\rho}_{r}$ given in (\ref{estimcov}) is similar.
\end{pf}
%
%
\begin{prop}\label{mdpforbar1} Let $(b_{n})$ be a sequence of real
numbers satisfying (\ref{hh2}). Then the sequence
$ (|\mathbb{T}_{r}|(\hat{\theta}^{r}-\theta)/b_{|\mathbb
{T}_{r}|} )$
satisfies the MDP on $\R^4$ with the speed
$b_{|\mathbb{T}_{r}|}^{2}/|\mathbb{T}_{r}|$ and the rate function $I$
given by
\[
I(x)=\tfrac{1}{2}x^{t}\bigl(\Sigma'
\bigr)^{-1}x,
\]
where
\[
\Sigma'=\sigma^{2}\pmatrix{ K & \rho K
\cr
\rho K & K}
\]
with
\[
K=\frac{1}{\mu_{2}(\theta,\sigma^{2})-\mu
_{1}(\theta)^{2}} \pmatrix{ 1 & -\mu_{1}(\theta)
\vspace*{2pt}\cr
-
\mu_{1}(\theta) & \mu_{2}\bigl(\theta,
\sigma^{2}\bigr)}.\vadjust{\goodbreak}
\]
\end{prop}

\begin{pf}
We first observe that
\[
\frac{|\mathbb{T}_{r}|}{b_{|\mathbb{T}_{r}|}} \bigl(\hat{\theta}^{r} -
\theta\bigr) =
M(A_{r},B_{r}). \frac{U^{r}(f)}{b_{|\mathbb{T}_{r}|}},
\]
where\vspace*{1pt} $f=(f_{1}, f_{2}, f_{3}, f_{4})^{t}=(\mathbf{xy}, \mathbf{y},
\mathbf{xz}, \mathbf{z})^{t}$, $U^{r}(f)= M_{\mathbb{T}_{r}}(f-Pf)$,
$A_{r}=\overline{M}_{\mathbb{T}_{r}}(\mathbf{x})$,
$B_{r}=\overline{M}_{\mathbb{T}_{r}}(\mathbf{x}^{2})-\overline
{M}_{\mathbb{T}_{r}}(\mathbf{x})^{2}$
and
\[
M(A_{r},B_{r})=\pmatrix{ \dfrac{1}{B_{r}} &
\dfrac{-A_{r}}{B_{r}} & 0 & 0
\cr
\displaystyle
\dfrac{-A_{r}}{B_{r}} & \dfrac{B_{r}+A_{r}^{2}}{B_{r}} & 0 & 0
\cr
\displaystyle
0 & 0 & \dfrac{1}{B_{r}} & \dfrac{-A_{r}}{B_{r}}
\cr
\displaystyle
0 & 0 &
\dfrac{-A_{r}}{B_{r}} & \dfrac{B_{r}+A_{r}^{2}}{B_{r}}}.
\]
For the sake of simplicity we wrote $Pf=(Pf_{1}, Pf_{2}, Pf_{3},
Pf_{4})^{t}$, where $P$ denotes the $\T$-transition probability
associated to BAR(1) process in the Gaussian case, which is given by
\begin{eqnarray*}
P(x,dy,dz)&=&\frac{1}{2\pi\sigma^{2}(1-\rho^{2})}\\
&&{}\times\exp\biggl(-\frac
{1}{2}\pmatrix{y-
\alpha_{0}x-\beta_{0}
\cr
\displaystyle
z - \alpha_{1}x-
\beta_{1}}^{t}\Gamma^{-1}\pmatrix{ y-
\alpha_{0}x-\beta_{0}
\cr
\displaystyle
z - \alpha_{1}x-
\beta_{1}} \biggr) \,dy\,dz,
\end{eqnarray*}
where $\Gamma$ is the covariance matrix defined in
(\ref{matrixcov}).

On one hand, from Proposition \ref{expoconv1},
\[
A_r\underset{{b_{|\mathbb{T}_{r}|}^{2}}/{|\mathbb
{T}_{r}|}} {\stackrel{\mathrm{superexp}} {
\longrightarrow}} a: = \mu_{1}(\theta) \quad\mbox{and}\quad B_{r}
\underset{{b_{|\mathbb{T}_{r}|}^{2}}/{|\mathbb
{T}_{r}|}} {\stackrel{\mathrm{superexp}} {\longrightarrow}}
b:= \mu_{2}\bigl(\theta, \sigma^{2}\bigr)-
\mu_{1}(\theta)^{2},
\]
so that by Lemma \ref{exponentialmapping}, we obtain
\[
M(A_{r},B_{r}) \underset{{b_{|\mathbb{T}_{r}|}^{2}}/{|\mathbb
{T}_{r}|}} {\stackrel{
\mathrm{superexp}} {\longrightarrow}} M(a,b):=\pmatrix{ K & 0
\cr
\displaystyle
0 & K }.
\]
On the other hand, let $\lambda= (\lambda_{1}, \lambda_{2},
\lambda_{3}, \lambda_{4})^{t}\in\mathbb{R}^{4}$. For all $x\in\R$,
we have that
\begin{eqnarray*}
&&
P\exp\bigl(\lambda^{t}(f - Pf) \bigr) (x) \\
&&\qquad= \int
_{\R^{2}} \exp\Biggl(\sum_{i=1}^4
\lambda_{i}(f_{i} - Pf_{i}) \Biggr)
(x,y,z)P(x,dy,dz)
\\
&&\qquad = \int_{\R^{2}} \exp\left(\lambda^t\pmatrix{
xy-x(\alpha_{0}x+\beta_{0})
\cr
\displaystyle
y- \alpha_{0}x-
\beta_{0}
\cr
\displaystyle
xz-x(\alpha_{1}x+\beta_{1})
\cr
\displaystyle
z-
\alpha_{1}x-\beta_{1}} \right)P(x,dy,dz)
\\
&&\qquad= \exp\biggl(-\pmatrix{ \alpha_{0}x+\beta_{0}
\cr
\displaystyle
\alpha_{1}x+\beta_{1}}^t \pmatrix{
\lambda_{1}x + \lambda_{2}
\cr
\displaystyle
\lambda_{3}x +
\lambda_{4}} \biggr)\\
&&\qquad\quad{}\times \int_{\R^{2}} \exp\biggl(\pmatrix{
\lambda_{1}x + \lambda_{2}
\cr
\displaystyle
\lambda_{3}x +
\lambda_{4}}^t \pmatrix{ y
\cr
\displaystyle
z} \biggr)P(x,dy,dz).
\end{eqnarray*}
We know that
\begin{eqnarray*}
&&
\int_{\R^{2}} \exp\biggl(\pmatrix{ \lambda_{1}x +
\lambda_{2}
\cr
\displaystyle
\lambda_{3}x + \lambda_{4}}^t
\pmatrix{ y
\cr
\displaystyle
z} \biggr)P(x,dy,dz)\\
&&\qquad=\exp\biggl(\pmatrix{ \alpha_{0}x+
\beta_{0}
\cr
\displaystyle
\alpha_{1}x+\beta_{1}
}^t \pmatrix{ \lambda_{1}x + \lambda_{2}
\cr
\displaystyle
\lambda_{3}x + \lambda_{4} } \biggr)
\\
&&\qquad\quad{}\times\exp\biggl(\frac{1}{2}\pmatrix{ \lambda_{1}x +
\lambda_{2}
\cr
\displaystyle
\lambda_{3}x + \lambda_{4}
}^t\Gamma\pmatrix{ \lambda_{1}x + \lambda_{2}
\cr
\displaystyle
\lambda_{3}x + \lambda_{4} } \biggr).
\end{eqnarray*}
Let $\Xi(x)$ denote the square matrix with entries $(Pf_{i}f_{j}
-Pf_{i}Pf_{j})(x)$, for $1\le i,j\le4$. So we obtain that
\begin{eqnarray*}
P\exp\bigl(\lambda^{t}(f - Pf) \bigr) (x)&=& \exp\biggl(
\frac{1}{2}\pmatrix{ \lambda_{1}x + \lambda_{2}
\cr
\displaystyle
\lambda_{3}x + \lambda_{4} }^t\Gamma\pmatrix{
\lambda_{1}x + \lambda_{2}
\cr
\displaystyle
\lambda_{3}x +
\lambda_{4} } \biggr)
\\
& =& \exp\Biggl(\frac{1}{2}\sum_{i,j = 1}^{4}
\lambda_{i} \lambda_{j} (Pf_{i}f_{j}
- Pf_{i}Pf_{j}) (x) \Biggr)
\\
&=& \exp\biggl(\frac{1}{2}\lambda^{t}\Xi(x) \lambda\biggr).
\end{eqnarray*}
Recall that the filtration $(\mathcal{H}_{n})_{n\ge0}$ is defined
by $\mathcal{H}_{0}=\sigma(X_{1})$ and $\mathcal{H}_{n} =
\sigma(\Delta_{\Pi(i)},\allowbreak \Pi(i+1), 1\leq i\leq n )$.
Therefore, from the previous calculations, we deduce that for all
$k\in\N$,
\begin{eqnarray*}
\E\bigl[\exp\bigl(\lambda^{t}(f-Pf) (\Delta_{\Pi(k)}) \bigr)
/ \HH_{k-1} \bigr] &=& P \bigl(\exp\bigl(\lambda^{t}(f-Pf)
\bigr) \bigr) (X_{\Pi(k)}) \\
&=& \exp\bigl(\tfrac{1}{2}
\lambda^{t}\Xi(X_{\Pi(k)})\lambda\bigr).
\end{eqnarray*}
Now, recall that $(M_{n}^{\Pi}(f-Pf))_{n\in\N}$ is a
$(\HH_{n})$-martingale and by straightforward calculations, its
increasing process is given by $ \langle
M^{\Pi}(f-Pf)\rangle_{n} = \sum_{k = 1}^{n}\Xi(X_{\Pi(k)})$.
From the foregoing, we infer that
\[
\biggl(\exp\biggl(\lambda^{t}M_{n}^{\Pi}(f-Pf) -
\frac{\lambda^{t}\langle
M^{\Pi}(f-Pf)\rangle_{n}\lambda}{2} \biggr) \biggr)_{n\in\N}
\]
is a $(\HH_{n})$-martingale. It\vspace*{1pt} then follows that for all
$\lambda\in\R^{4}$, $
G_{n}(\lambda)=\frac{1}{2n}\lambda^{t}\langle
M^{\Pi}(f-Pf)\rangle_{n}\lambda$ is an upper and lower cumulant.
Moreover, from Proposition \ref{expoconv1} and Lemma
\ref{exponentialmapping},
\[
G_{n}(\lambda)\underset{{b_{|\mathbb{T}_{r}|}^{2}}/{|\mathbb
{T}_{r}|}} {\stackrel{
\mathrm{superexp}} {\longrightarrow}} \tfrac{1}{2}\lambda^{t}
\Sigma\lambda\qquad\mbox{where } \Sigma=\sigma^{2}\pmatrix{ K^{-1} &
\rho K^{-1}
\cr
\displaystyle
\rho K^{-1} & K^{-1} }.
\]
We thus deduce that (see, e.g., \cite{TheseWorms})
$ (M_{n}^{\Pi}(f)/b_{n} )$ satisfies a MDP on $\R^4$ with
speed $b_{n}^{2}/n$ and the rate function
%
\begin{equation}
\label{tauxJJ}J(x)=\tfrac{1}{2}x^{t}\Sigma^{-1}x.
\end{equation}
Taking $n=|\mathbb{T}_{r}|$, it follows that
$ (U^{r}(f)/b_{|\mathbb{T}_{r}|} )$ satisfies a MDP with speed
$b_{|\mathbb{T}_{r}|}^{2}/|\mathbb{T}_{r}|$ and the rate\vspace*{1pt}
function $J$ given in (\ref{tauxJJ}). Finally, using the contraction
principle (see, e.g.,
\cite{DemZei}) as in \cite{Worms}, we get the result.
\end{pf}
%

Let us now consider the test statistic.

\begin{prop}\label{mdpforestimator1} Let $(b_n)$ a sequence of real
numbers satisfying (\ref{hh2}).
Then under the null hypothesis
$H_{0}=\{(\alpha_{0},\beta_{0})=(\alpha_{1},\beta_{1})\}$,
$\frac{|\mathbb{T}_{r}|^{1/2}}{b_{|\mathbb{T}_{r}|}}(\chi_{r}^{(1)})^{1/2}$
satisfies a MDP on $\R$ with speed
$b_{|\mathbb{T}_{r}|}^{2}/|\mathbb{T}_{r}|$ and the rate
function
\[
I'(y)= \cases{\displaystyle  \frac{y^{2}}{2}, &\quad  if $y\in
\mathbb{R}_{+}$,
\vspace*{2pt}\cr
\displaystyle
+\infty, &\quad  otherwise. }
\]
Under the alternative hypothesis $H_{1}$ of $H_{0}$, we have for all $A>0$,
\[
\limsup_{r\rightarrow\infty} \frac{|\mathbb{T}_{r}|}{b_{|\mathbb
{T}_{r}|}^{2}}\log\mathbb{P} \bigl(
\chi_{r}^{(1)}<A \bigr)=-\infty.
\]
\end{prop}

\begin{pf} We have
\[
H_{0}=\bigl\{g(\theta)=0\bigr\} \qquad\mbox{where } g(\theta)=(\alpha
_{0}-\alpha_{1}, \beta_{0}-
\beta_{1})^{t}.
\]
From Proposition \ref{mdpforbar1},
$ (|\mathbb{T}_{r}|(\hat{\theta}^{r}-\theta)/b_{|\mathbb
{T}_{r}|} )$
satisfies a MDP on $\R^4$ with speed
$b_{|\mathbb{T}_{r}|}^{2}/|\mathbb{T}_{r}|$ and the rate
function $ I(x)= \frac{1}{2}x^{t}(\Sigma')^{-1}x$. So
that, using the delta method for the MDP (see, e.g., \cite{GaoZhao},
Theorem 3.1) we conclude that\break
$ (|\mathbb{T}_{r}| (g(\hat{\theta}^{r})-g(\theta
))/b_{|\mathbb{T}_{r}|} )$
satisfies a MDP on $\R^2$ with speed
$b_{|\mathbb{T}_{r}|}^{2}/|\mathbb{T}_{r}|$ and the rate
function
\[
J(y) = \inf\bigl\{I(x); y = g'(\theta)x \bigr\}.
\]
Identification of this rate function by usual optimization argument
leads us to
%
\begin{equation}
\label{tauxJ} J(x) = \tfrac{1}{2}x^{t}\bigl(
\Sigma''\bigr)^{-1}x\qquad\mbox{where }
\Sigma'' = 2\sigma^{2}(1-\rho)K.
\end{equation}
Under the null hypothesis $H_{0}$, we have $g(\theta)=0$, so that
$ (|\mathbb{T}_{r}| g(\hat{\theta}^{r})/b_{|\mathbb
{T}_{r}|} )$
satisfies a MDP on $\R^2$ with speed
$b_{|\mathbb{T}_{r}|}^{2}/|\mathbb{T}_{r}|$ and rate function $J$
given in~(\ref{tauxJ}).\vspace*{2pt}

Now, since $K=K(\theta,\sigma)$ is a continuous function of
$(\theta, \sigma)$ (see \cite{Guyon}), so that, letting
$\hat{K}_{r}=K(\hat{\theta}^{r},\hat{\sigma}_{r})$, Lemma
\ref{exponentialmapping}, Propositions \ref{mdpforbar1} and
\ref{expoconvsigmarho} entail that
\[
\hat{\Sigma}''_{r} = 2\hat{
\sigma}_{r}^{2}(1-\hat{\rho}_{r})
\hat{K}_{r} \underset{{b_{|\mathbb{T}_{r}|}^{2}}/{|\mathbb
{T}_{r}|}} {\stackrel{
\mathrm{superexp}} {\longrightarrow}} \Sigma''.\vadjust{\goodbreak}
\]
It follows using the contraction principle
(see, e.g., \cite{Worms}) that
\[
\bigl(|\mathbb{T}_{r}|\mbox{$\hat{\Sigma}''_{r}$}{}^{-1/2}g\bigl(\hat{\theta
}^{r}\bigr)/b_{|\mathbb{T}_{r}|} \bigr)
\]
satisfies a MDP on $\R^2$ with speed
$b_{|\mathbb{T}_{r}|}^{2}/|\mathbb{T}_{r}|$ and the rate
function $ J'(y)=\frac{\|y\|^{2}}{2}$.

In particular,
\[
\biggl\llVert\frac{|\mathbb{T}_{r}|}{b_{|\mathbb{T}_{r}|}}
\mbox{$\hat{\Sigma}''_{r}$}{}^{-1/2}g
\bigl(\hat{\theta}^{r}\bigr)\biggr\rrVert= \frac{|\mathbb
{T}_{r}|^{1/2}}{b_{|\mathbb{T}_{r}|}}\sqrt
{ \chi_{r}^{(1)}}
\]
satisfies a MDP with speed
$b_{|\mathbb{T}_{r}|}^{2}/|\mathbb{T}_{r}|$ and the rate
function $I'$ given in the Proposition \ref{mdpforestimator1}.

Now, under the alternative hypothesis $H_{1}$,
\[
\frac{\chi_{r}^{(1)}}{|\mathbb{T}_{r}|} = g\bigl(\hat{\theta}^{r}\bigr)^{t}
\mbox{$\hat{\Sigma}''_{r}$}{}^{-1}g\bigl(
\hat{\theta}^{r}\bigr) \underset{{b_{|\mathbb
{T}_{r}|}^{2}}/{|\mathbb
{T}_{r}|}} {\stackrel{
\mathrm{superexp}} {\longrightarrow}} g(\theta)^{t}\bigl(
\Sigma''\bigr)^{-1}g(\theta)>0,
\]
so that\vspace*{2pt} $\chi_{r}^{(1)}$ converges
$\frac{b_{|\mathbb{T}_{r}|}^{2}}{|\mathbb{T}_{r}|}$-superexponentially
fast to $+\infty$. This concludes the proof of the Proposition
\ref{mdpforestimator1}.
\end{pf}
%
\subsection{Compact case: The uniformly ergodic setting}

We recall that the model under study in this section is the model
(\ref{bar1}) where we assume that the noise and initial state
$X_{1}$ take their values in a compact set. The results will be
given without proofs, since the proofs are similar to those done in
the previous section. The novelty here is that the range of speed is
improved in comparison to the previous section. However, we suppose
that the process takes its values in a compact set, which is not the
case in the previous section.

We take $F = \mathcal{C}_{b}^{1}(\mathbb{R})$ the set of all
$\mathcal{C}^{1}$ functions bounded on $\R$. Therefore, one can
easily check (as in \cite{Guyon}, proof of Proposition 28) that
hypothesis (H2) is satisfied with
$\alpha=\max(|\alpha_{0}|,|\alpha_{1}|)$. We use the same notation
as in the previous section.

Let us begin by the fact that the estimator of $\theta$ converges
super exponentially fast to the true parameter.
%
\begin{prop}\label{expoconvtheta2} Let $(b_n)$ a sequence of real numbers
satisfying the Assumption \ref{h1}. Then we have
\[
\hat{\theta}^{r} \underset{{b_{|\mathbb
{T}_{r}|}^{2}}/{|\mathbb{T}_{r}|}} {\stackrel{\mathrm
{superexp}} {\longrightarrow}} \theta.
\]
\end{prop}
We may now refine this result by proving deviation inequality.
%
\begin{prop}\label{devineqtheta2} For all $\delta>0$ and for all
$\gamma<
\min(\frac{c_{1}b}{1+\delta}, \frac{c_{1}b}{1+\sqrt{\delta}},
\frac{c_{1}b}{1+\sqrt[4]{\delta}} )$, where $c_{1}$ is a
positive constant which depends on\vadjust{\goodbreak} $\mu_{1}$, and for
$r_0:=\break
\frac{\log(\gamma^{q}\delta^{1-p/2}/c_{0} )}{\log
\alpha}$,
we have
%
\begin{eqnarray}
\label{devineqthetha2}\qquad
\mathbb{P} \bigl(\bigl\llVert\hat{\theta}^{r}-
\theta\bigr\rrVert>\delta\bigr)
&\leq&\cases{\displaystyle  c_{2}\exp
\bigl(c''\gamma^{q}\delta^{1-p/2}
\bigr)\exp\bigl(-c'\gamma^{2q}\delta^{2-p}|
\mathbb{T}_{r}| \bigr),\vspace*{2pt}\cr
\qquad\forall r\in\N, \hspace*{2.75pt}\qquad  \mbox{if $\displaystyle\alpha<
\frac{1}{2}$},
\vspace*{2pt}\cr
\displaystyle
c_{2}\exp\bigl(c'
\gamma^{q}\delta^{1-p/2}(r+1)-c'
\gamma^{2q}\delta^{2-p}|\mathbb{T}_{r}| \bigr),\vspace*{2pt}\cr
\qquad\forall r\in\N, \hspace*{2.75pt}\qquad  \mbox{if $\displaystyle\alpha=\frac{1}{2}$},
\vspace*{2pt}\cr
\displaystyle
c_{2}\exp
\bigl(-c'\gamma^{2q}\delta^{2-p}|
\mathbb{T}_{r}| \bigr),\vspace*{2pt}\cr
\qquad\forall r > r_0, \qquad
\mbox{if $\displaystyle\frac{1}{2}<\alpha<\frac{\sqrt{2}}{2}$},
\vspace*{2pt}\cr
\displaystyle
c_{2}\exp
\biggl(-c'\gamma^{q}\delta^{2-p}
\frac{|\mathbb
{T}_{r}|}{r+1} \biggr),\vspace*{2pt}\cr
\qquad\forall r >r_0,
\qquad  \mbox{if $\displaystyle\alpha=\frac{\sqrt{2}}{2}$},
\vspace*{2pt}\cr
\displaystyle
c_{2}\exp\biggl(-c'\gamma^{2q}\delta^{2-p}\frac{1}{\alpha
^{2(r+1)}} \biggr),\vspace*{2pt}\cr
\qquad\forall r >r_0, \qquad \mbox{if
$\displaystyle\alpha>\frac{\sqrt{2}}{2}$},}
\end{eqnarray}
where $c_{2}$ is a positive constant, $c'$ and $c''$ depend on
$\alpha$, and $c$ and may differ line by line, $c_{0}$ depends on
$\alpha$, $c$ and $\gamma$, and may differ line by line,
$p\in\{0,1,3/2\}$ and $q\in\{0,1\}$.
\end{prop}
We have now to consider super exponential convergence of the estimators
of the other parameters.

\begin{prop}\label{expoconvsigmarho2} Let $(b_n)$ a sequence of real
numbers satisfying Assumption \ref{h1}.
Then we have
\[
\bigl(\hat{\sigma}_{r}^{2}, \hat{\rho}_{r}
\bigr)\underset{{b_{|\mathbb{T}_{r}|}^{2}}/{|\mathbb
{T}_{r}|}} {\stackrel{\mathrm{superexp}} {
\longrightarrow}} \bigl(\sigma^{2}, \rho\bigr).
\]
\end{prop}
As previously we may now prove MDP for the estimator of $\theta$.
%
\begin{prop}\label{mdpforbar2} Let $(b_{n})$ a sequence of
real numbers satisfying the Assumption \ref{h1}. Then
$ (|\mathbb{T}_{r}|
(\hat{\theta}^{r}-\theta)/b_{|\mathbb{T}_{r}|} )$ satisfies
the MDP on $\R^4$ with the speed
$b_{|\mathbb{T}_{r}|}^{2}/|\mathbb{T}_{r}|$ and rate function
\[
I(x)=\tfrac{1}{2}x^{t}\bigl(\Sigma'
\bigr)^{-1}x,
\]
where
\[
\Sigma'=\sigma^{2}\pmatrix{ K & \rho K
\cr
\displaystyle
\rho K & K }
\]
with
\[
K=\frac{1}{\mu_{2}(\theta,\sigma
^{2})-\mu_{1}(\theta)^{2}} \pmatrix{ 1 & -\mu_{1}(\theta)
\cr
\displaystyle
-
\mu_{1}(\theta) & \mu_{2}\bigl(\theta,
\sigma^{2}\bigr)}.
\]
\end{prop}

\begin{remark}
Notice that the proof of Proposition \ref{mdpforbar2} does not need the
cumulant method as in the proof of Proposition \ref{mdpforbar1}.
Indeed, since we are in the bounded case, from MDP of martingale with
bounded jumps (see \cite{Dembo}), we need only to prove the
superexponential convergence of increasing process of the martingale.
This convergence is easily obtained from Theorem~\ref{expoprobaineq2}.
\end{remark}
Let us give us our last result by considering a MDP for the test statistic.
%
\begin{prop}\label{mdpforestimator2} Let $(b_n)$ a sequence of real
numbers satisfying
the Assumption \ref{h1}. Then under the null hypothesis $H_{0} =
\{(\alpha_{0},\beta_{0})=(\alpha_{1},\beta_{1})\}$,
$\frac{|\mathbb{T}_{r}|^{1/2}}{b_{|\mathbb{T}_{r}|}}(\chi_{r}^{(1)})^{1/2}$
satisfies a MDP on $\R$ with speed
$b_{|\mathbb{T}_{r}|}^{2}/|\mathbb{T}_{r}|$ and the rate
function
\[
I'(y)= \cases{\displaystyle  \frac{y^{2}}{2}, &\quad  if $y\in
\mathbb{R}_{+}$,
\vspace*{2pt}\cr
\displaystyle
+\infty, &\quad  otherwise. }
\]
Under the alternative hypothesis $H_{1}$ of $H_{0}$, we have for all $A>0$,
\[
\limsup_{r\rightarrow
\infty}\frac{|\mathbb{T}_{r}|}{b_{|\mathbb{T}_{r}|}^{2}}\log\mathbb{P}
\bigl(
\chi_{r}^{(1)}<A \bigr) = -\infty.
\]
\end{prop}

\begin{appendix}\label{app}
\section{Proof of the exponential inequalities}\label{appendixA}
This section is devoted to the proofs of Theorems \ref{someprobaineq1},
\ref{someprobaineq2}, \ref{expoprobaineq1}, \ref{expoprobaineq2}
and Proposition \ref{devineqtheta}.
\subsection{\texorpdfstring{Proof of Theorem \protect\ref{someprobaineq1}}
{Proof of Theorem 2.11}} \label{proofsomeprobaineq1}
Let $f\in F$ such that $(\mu, f) = 0$. We shall study
the three empirical averages $\overline{M}_{\mathbb{G}_{r}}(f)$,
$\overline{M}{}^{\Pi}_{n}(f)$ and $\overline{M}_{\mathbb{T}_{r}}(f)$
successively.\vspace*{1pt}

\textit{Part} 1. Let us first deal with $\overline{M}_{\mathbb
{G}_{r}}(f)$. By the  Markov inequality, we get, for all $\delta>0$,
\[
\mathbb{P} \bigl( \bigl|\overline{M}_{\mathbb{G}_{r}}(f) \bigr|>\delta\bigr)
=\mathbb{P}
\bigl( \bigl|\overline{M}_{\mathbb{G}_{r}}(f) \bigr|^{2}>\delta^{2}
\bigr) \leq\frac{1}{\delta^{2}}\mathbb{E} \bigl[\bigl(\overline
{M}_{\mathbb
{G}_{r}}(f)
\bigr)^{2} \bigr].
\]
By Guyon (see \cite{Guyon}), we have
\[
\mathbb{E} \bigl[\bigl(\overline{M}_{\mathbb{G}_{r}}(f)\bigr)^{2}
\bigr]=\sum_{p=0}^{r}2^{-p-\mathbf{1}_{p<r}}\nu
Q^{p}P \bigl(Q^{r-p-1}f\otimes Q^{r-p-1}f \bigr).
\]
Hypothesis (H1) implies that there exists $g\in F$ and
$\alpha\in(0,1)$ such that for all $p\in\{0,1,\ldots,r \}$,
\[
\nu Q^{p}P\bigl(Q^{r-p-1}f\otimes Q^{r-p-1}f\bigr) \leq
\alpha^{2(r-p-1)}\nu Q^{p}P(g\otimes g).
\]
Next, hypotheses (iii), (v) and (vi) imply
that there is a positive constant $c$ such that for all $p\in
\{0,1,\ldots,r \}$,
\[
\alpha^{2(r-p-1)}\nu Q^{p}P(g\otimes g)\leq c
\alpha^{2(r-p-1)}.
\]
This leads us to
%
\begin{eqnarray}
\label{moment2} \mathbb{E} \bigl[\bigl(\overline{M}_{\mathbb{G}_{r}}(f)
\bigr)^{2} \bigr]&\leq& c\sum_{p=0}^{r}2^{-p-\mathbf{1}_{p<r}}
\alpha^{2(r-p-1)} \nonumber\\[-8pt]\\[-8pt]
&=& \cases{\displaystyle  c \biggl(\frac{1}{2} \biggr)^{r}
+ c\frac{\alpha^{2r}- ({1}/{2} )^{r}}{2\alpha^{2}-1}, &\quad  if $\alpha
^{2}\neq\dfrac{1}{2}$,
\vspace*{2pt}\cr
\displaystyle
cr
\biggl(\frac{1}{2} \biggr)^{r}, &\quad  if $\alpha^{2}=
\dfrac{1}{2}$,}\nonumber
\end{eqnarray}
and therefore (\ref{probaineq1}) follows.\vspace*{9pt}

\textit{Part} 2. Let us now consider $\overline{M}{}^{\Pi}_{n}(f)$. By
the Markov inequality and the triangle inequality, we get, for all
$\delta>0$,
%
\begin{eqnarray}
\label{decomL2}
&&\mathbb{P} \bigl( \bigl|\overline{M}{}^{\Pi}_{n}(f)
\bigr|>\delta\bigr) \nonumber\\
&&\qquad= \mathbb{P} \bigl( \bigl|\overline{M}{}^{\Pi}_{n}(f)
\bigr|^{2}>\delta^{2} \bigr)\leq\frac{1}{\delta^{2}}\mathbb{E}
\bigl[ \bigl(\overline{M}{}^{\Pi
}_{n}(f) \bigr)^{2}
\bigr]
\\
&&\qquad\leq \frac{2}{\delta^{2}}\mathbb{E} \Biggl[ \Biggl(\sum
_{q=0}^{r_{n}-1}\frac{2^{q}}{n}\overline{M}_{\mathbb{G}_{q}}(f)
\Biggr)^{2} \Biggr] + \frac{2}{\delta^{2}}\mathbb{E} \Biggl[ \Biggl(
\frac{1}{n}\sum_{i=2^{r_{n}}}^{n}f(X_{\Pi(i)})
\Biggr)^{2} \Biggr].
\nonumber
\end{eqnarray}
In the last inequality (\ref{decomL2}), we have used the decomposition
\[
\overline{M}{}^{\Pi}_{n}(f)=\sum_{q=0}^{r_{n}-1}
\frac
{2^{q}}{n}\overline{M}_{\mathbb{G}_{q}}(f) + \frac{1}{n}\sum
_{i=2^{r_{n}}}^{n}f(X_{\Pi(i)}).
\]
In what follows, the constant $c$ may be slightly different from
that of part 1 and may differ term by term. For the first
term appearing in (\ref{decomL2}), we have
\[
\mathbb{E} \Biggl[ \Biggl(\sum_{q=0}^{r_{n}-1}
\frac
{2^{q}}{n}\overline{M}_{\mathbb{G}_{q}}(f) \Biggr)^{2} \Biggr] =
\Biggl\llVert\sum_{q=0}^{r_{n}-1}
\frac{2^{q}}{n}\overline{M}_{\mathbb{G}_{q}}(f)\Biggr\rrVert
_{2}^{2} \leq\Biggl(\sum_{q=0}^{r_{n}-1}
\frac{2^{q}}{n}\bigl\llVert\overline{M}_{\mathbb{G}_{q}}(f)\bigr
\rrVert
_{2} \Biggr)^{2}.
\]
Using (\ref{moment2}), we get that
\[
\sum_{q=0}^{r_{n}-1}\frac{2^{q}}{n}\bigl
\llVert\overline{M}_{\mathbb{G}_{q}}(f)\bigr\rrVert_{2}\leq\cases{\displaystyle
\frac{c}{n}\sum_{q=0}^{r_{n}-1}(\sqrt
2)^{q}\leq c\frac{\sqrt2^{{r_{n}}}}{n}, &\quad  if $\displaystyle\alpha^{2}<
\frac{1}{2}$,
\cr
\displaystyle
\frac{c}{n}\sum_{q=0}^{r_{n}}q^{1/2}
\sqrt2^{q}\leq c\frac{r_{n}^{1/2}\sqrt2^{r_{n}}}{n}, &\quad  if $\displaystyle\alpha^{2}=
\frac{1}{2}$,
\cr
\displaystyle
\frac{c}{n}\sum_{q=0}^{r_{n}-1}(2
\alpha)^{q}\leq c\alpha^{r_{n}}, &\quad  if $\displaystyle\alpha^{2}>
\frac{1}{2}$,}
\]
which implies that
%
\begin{equation}
\label{term1} \quad \mathbb{E} \Biggl[ \Biggl(\sum_{q=0}^{r_{n}-1}
\frac{2^{q}}{n}\overline{M}_{\mathbb
{G}_{q}}(f) \Biggr)^{2} \Biggr]
\leq\cases{\displaystyle  c\frac{2^{r_{n}}}{n^{2}}\leq c \biggl(\frac{1}{2}
\biggr)^{r_{n}+1}, &\quad  if $\displaystyle\alpha^{2}< \frac{1}{2}$,
\vspace*{2pt}\cr
\displaystyle
c
\frac{r_{n}}{2^{r_{n}+1}}, &\quad  if $\displaystyle\alpha^{2}=\frac{1}{2}$,
\vspace*{2pt}\cr
\displaystyle
c
\alpha^{2(r_{n}+1)}, &\quad  if $\displaystyle\alpha^{2}>\frac{1}{2}$.}
\end{equation}
Now, we have to control the second term in (\ref{decomL2}). As in
Guyon \cite{Guyon}, we have that
\begin{eqnarray*}
&&
\mathbb{E} \Biggl[ \Biggl(\frac{1}{n}\sum_{i=2^{r_{n}}}^{n}f(X_{\Pi(i)})
\Biggr)^{2} \Biggr] \\
&&\qquad\leq\frac{n-2^{r_{n}}+1}{n^{2}}\nu Q^{r_{n}}f^{2}
\\
&&\qquad\quad{} + \frac{(n-2^{r_{n}})(n-2^{r_{n}}+1)}{n^{2}(1-2^{-r_{n}})}\sum
_{p=0}^{r_{n}-1}2^{-p-1}
\nu Q^{p}P\bigl(Q^{r_{n}-p-1}f\otimes Q^{r_{n}-p-1}f\bigr)
\\
&&\qquad\leq\frac{c}{n} + c\sum_{p=0}^{r_{n}-1}2^{-p-1}
\alpha^{2r_{n}-2p-2}.
\end{eqnarray*}
Discussing following the value of $\alpha$, we obtain that
%
\begin{equation}
\label{term2} \mathbb{E} \Biggl[ \Biggl(\frac{1}{n}\sum
_{i=2^{r_{n}}}^{n}f(X_{\Pi(i)}) \Biggr)^{2}
\Biggr] \leq\cases{\displaystyle  c\frac{1}{2^{r_{n}+1}}, &\quad  if $\displaystyle\alpha^{2}<
\frac{1}{2}$,
\vspace*{2pt}\cr
\displaystyle
c\frac{r_{n}}{2^{r_{n}+1}}, &\quad  if $\displaystyle\alpha^{2}=
\frac{1}{2}$,
\vspace*{2pt}\cr
\displaystyle
c\alpha^{2(r_{n}+1)}, &\quad  if $\displaystyle\alpha^{2}>
\frac{1}{2}$.}
\end{equation}
Inequality (\ref{probaineq2}) then follows from (\ref{term1}) and
(\ref{term2}).\vspace*{9pt}

\textit{Part} 3. The case of $\overline{M}_{\mathbb{T}_{r}}(f)$ can
be deduced from the previous by taking \mbox{$n=|\mathbb{T}_{r}|$}.

\subsection{\texorpdfstring{Proof of Theorem \protect\ref{someprobaineq2}}
{Proof of Theorem 2.12}}\label{proofsomeprobaineq2}
Let $f\in\mathcal{B} (S^{3} )$ such that $Pf$ and $Pf^{2}$
exist and belong
to $F$ and $(\mu, Pf)=0$. We shall\vspace*{1pt} study the three empirical
averages $\overline{M}_{\mathbb{G}_{r}}(f)$,
$\overline{M}{}^{\Pi}_{n}(f)$ and $\overline{M}_{\mathbb{T}_{r}}(f)$
successively.

\textit{Part} 1. Let us first deal with
$\overline{M}_{\mathbb{G}_{r}}(f)$. By the Markov inequality, we get
for all $\delta>0$,
\begin{eqnarray*}
\mathbb{P} \bigl( \bigl|\overline{M}_{\mathbb{G}_{r}}(f) \bigr|>\delta\bigr)
&\leq& \frac{1}{\delta^{2}}\mathbb{E} \bigl[\bigl(\overline{M}_{\mathbb
{G}_{r}}(f)
\bigr)^{2} \bigr]
\\
&=& \frac{1}{\delta^{2}}\mathbb{E} \bigl[\bigl(\overline{M}_{\mathbb
{G}_{r}}(Pf)
\bigr)^{2} \bigr] + \frac{1}{\delta^{2}}\frac{1}{|\mathbb
{G}_{r}|}\mathbb{E}
\bigl[\overline{M}_{\mathbb{G}_{r}} \bigl(Pf^{2}-(Pf)^{2}
\bigr) \bigr]
\\
&\leq&\frac{1}{\delta^{2}}\mathbb{E} \bigl[\bigl(\overline{M}_{\mathbb
{G}_{r}}(Pf)
\bigr)^{2} \bigr] + \frac{c}{\delta^{2}} \biggl(\frac{1}{2}
\biggr)^{r}.
\end{eqnarray*}
The last inequality follows from the convergence of the sequence\break
$ (\mathbb{E} [\overline{M}_{\mathbb{G}_{r}} (Pf^{2}-(Pf)^{2}
) ] )_r$ (see \cite{Guyon}).

Now, using part 1 of the proof of Theorem
\ref{someprobaineq1} with $Pf$ instead of $f$ leads us to a similar
inequality (\ref{probaineq1}) in Theorem \ref{someprobaineq2} for
$f\in\mathcal{B} (S^{3} )$.

\textit{Part} 2. Let us now treat $\overline{M}{}^{\Pi}_{n}(f)$. Using
the two equalities
\begin{eqnarray*}
\overline{M}{}^{\Pi}_{n}(f)&=&\sum
_{q=0}^{r_{n}-1}\frac
{|\mathbb{G}_{q}|}{n} \overline{M}_{\mathbb{G}_{q}}(f)
+ \frac{1}{n}\sum_{i=2^{r_{n}}}^{n}f(
\Delta_{\Pi(i)}),
\\
\mathbb{E} \Biggl[ \Biggl(\frac{1}{n}\sum_{i=2^{r_{n}}}^{n}f(
\Delta_{\Pi(i)}) \Biggr)^{2} \Biggr] &=& \mathbb{E} \Biggl[
\Biggl(\frac{1}{n} \sum_{i=2^{r_{n}}}^{n}
Pf(X_{\Pi(i)}) \Biggr)^{2} \Biggr]\\
&&{} + \frac{1}{n}
\mathbb{E} \Biggl[ \frac{1}{n}\sum_{i=2^{r_{n}}}^{n}
\bigl(Pf^{2} - (Pf)^{2} \bigr) (X_{\Pi(i)} )
\Biggr],
\end{eqnarray*}
and part 2 of the proof of Theorem
\ref{someprobaineq1} with $Pf$ instead of $f$ leads us to a similar
inequality (\ref{probaineq2}) in Theorem \ref{someprobaineq2} for
$f\in\mathcal{B}$.\vspace*{9pt}

\textit{Part} 3. The case of $\overline{M}_{\mathbb{T}_{r}}(f)$ can
be deduced from the previous by taking \mbox{$n=|\mathbb{T}_{r}|$}.


\subsection{\texorpdfstring{Proof of Theorem \protect\ref{expoprobaineq1}}
{Proof of Theorem 3.1}} \label{proofexpoprobaineq1}
Let\vspace*{1pt} $f\in\mathcal{B}_b(S)$ such that $(\mu,
f)=0$. We shall study the three empirical averages
$\overline{M}_{\mathbb{G}_{r}}(f)$, $\overline{M}{}^{\Pi}_{n}(f)$ and
$\overline{M}_{\mathbb{T}_{r}}(f)$ successively.

\textit{Part} 1. Let us first deal with
$\overline{M}_{\mathbb{G}_{r}}(f)$. We have for all $\lambda>0$ and
for all $\delta>0$
%
\begin{equation}
\label{chernofineq} \mathbb{P}\bigl(\overline{M}_{\mathbb
{G}_{r}}(f)>\delta\bigr)
\leq\exp\bigl(-\lambda\delta|\mathbb{G}_{r}|\bigr)\mathbb{E} \biggl[\exp
\biggl(\lambda\sum_{i\in
\mathbb{G}_{r}}f(X_{i}) \biggr)
\biggr].
\end{equation}
By subtracting and adding terms, we get
\begin{eqnarray*}
&& \mathbb{E} \biggl[\exp\biggl(\lambda\sum_{i\in
\mathbb{G}_{r}}f(X_{i})
\biggr) \biggr]
\\
&&\qquad =\mathbb{E} \biggl[\mathbb{E} \biggl[ \prod_{i\in
\mathbb{G}_{r-1}}
\exp\bigl(\lambda\bigl(f(X_{2i})+f(X_{2i+1})-2Qf(X_{i})
\bigr)\bigr)\\
&&\qquad\quad\hspace*{94pt}{}\times\prod_{i\in
\mathbb{G}_{r-1}}\exp\bigl(2\lambda
Qf(X_{i})\bigr)/\mathcal{F}_{r-1} \biggr] \biggr].
\end{eqnarray*}
Now using the fact that conditionally to the $(r-1)$ first
generations the sequence $\{\Delta_{i}, i\in\mathbb{G}_{r-1}\}$ is
a sequence of independent random variables, we have that
\begin{eqnarray*}
&& \mathbb{E} \biggl[\mathbb{E} \biggl[ \prod_{i\in
\mathbb{G}_{r-1}}
\exp\bigl(\lambda\bigl(f(X_{2i})+f(X_{2i+1})-2Qf(X_{i})
\bigr)\bigr)\\
&&\qquad\quad\hspace*{60.8pt}{}\times\prod_{i\in
\mathbb{G}_{r-1}}\exp\bigl(2\lambda
Qf(X_{i})\bigr)/\mathcal{F}_{r-1} \biggr] \biggr]
\\
&&\qquad =\mathbb{E} \biggl[\prod_{i\in
\mathbb{G}_{r-1}}\exp\bigl(2
\lambda Qf(X_{i}) \bigr)\\
&&\qquad\quad\hspace*{10pt}{}\times\prod_{i\in
\mathbb{G}_{r-1}}
\mathbb{E} \bigl[\exp\bigl(\lambda\bigl(f(X_{2i})+f(X_{2i+1})-2Qf(X_{i})
\bigr) \bigr)/ \mathcal{F}_{r-1} \bigr] \biggr].
\end{eqnarray*}
Using the Azuma--Bennett--Hoeffding inequalities
\cite{Azuma,Bennett,Hoeffding} (see Lemma \ref{lemABH} for more
detail), we get according to (H2), for all $i\in
\mathbb{G}_{r-1}$,
\[
\mathbb{E} \bigl[\exp\bigl(\lambda\bigl(f(X_{2i})+f(X_{2i+1})-2Qf(X_{i})
\bigr) \bigr)/ \mathcal{F}_{r-1} \bigr] \leq\exp\bigl(2
\lambda^{2}c^{2}(1+\alpha)^{2} \bigr).
\]
This leads us to
\[
\mathbb{E} \biggl[\exp\biggl(\lambda\sum_{i\in
\mathbb{G}_{r}}f(X_{i})
\biggr) \biggr] \leq\exp\bigl(\lambda^{2}c^{2}(1+
\alpha)^{2}|\mathbb{G}_{r}| \bigr)\mathbb{E} \biggl[\prod
_{i\in
\mathbb{G}_{r-1}}\exp\bigl(2\lambda Qf(X_{i})
\bigr) \biggr].
\]
Doing the same thing for $\mathbb{E} [\prod_{i\in
\mathbb{G}_{r-1}}\exp(2\lambda Qf(X_{i}) ) ]$ with
$Qf$ replacing $f$, we get
\begin{eqnarray*}
&&
\mathbb{E} \biggl[\prod_{i\in
\mathbb{G}_{r-1}}\exp\bigl(2\lambda
Qf(X_{i}) \bigr) \biggr] \\
&&\qquad\leq\exp\bigl(2\lambda^{2}c^{2}
\bigl(\alpha+\alpha^{2}\bigr)^{2}|\mathbb
{G}_{r}| \bigr)\mathbb{E} \biggl[\prod_{i\in
\mathbb{G}_{r-2}}
\exp\bigl(2^{2}\lambda Q^{2}f(X_{i}) \bigr)
\biggr].
\end{eqnarray*}
Iterating this procedure, we get
\begin{eqnarray*}
\mathbb{E} \biggl[\exp\biggl(\lambda\sum_{i\in
\mathbb{G}_{r}}f(X_{i})
\biggr) \biggr]
&\leq&\mathbb{E} \bigl[\exp\bigl(2^{r}\lambda
Q^{r}f(X_{1}) \bigr) \bigr] \\
&&{}\times\prod
_{k=1}^{r} \exp\bigl(2^{k-1}
\lambda^{2}c^{2}\bigl(\alpha^{k-1}+\alpha
^{k}\bigr)^{2}|\mathbb{G}_{r}| \bigr).
\end{eqnarray*}
Once again, according to (H2), we have
\[
\mathbb{E} \biggl[\exp\biggl(\lambda\sum_{i\in
\mathbb{G}_{r}}f(X_{i})
\biggr) \biggr] \leq\exp\bigl(\lambda c\alpha^{r}|
\mathbb{G}_{r}| \bigr)\times\exp\Biggl(\lambda^{2}c^{2}(1+
\alpha)^2|\mathbb{G}_{r}|\sum_{k=1}^r
\bigl(2\alpha^2\bigr)^{k-1} \Biggr).
\]
Hence:
\begin{itemize}
\item if $\alpha^{2}\neq\frac{1}{2}$, then
\[
\mathbb{E} \biggl[\exp\biggl(\lambda\sum_{i\in
\mathbb{G}_{r}}f(X_{i})
\biggr) \biggr] \leq\exp\biggl(\lambda^{2}c^{2}(1 +
\alpha)^{2}\frac{1-(2\alpha
^{2})^{r}}{1-2\alpha^{2}}|\mathbb{G}_{r}| \biggr)\times
\exp\bigl(\lambda c\alpha^{r}|\mathbb{G}_{r}| \bigr);
\]

\item if $\alpha^{2}=\frac{1}{2}$, then
\[
\mathbb{E} \biggl[\exp\biggl(\lambda\sum_{i\in
\mathbb{G}_{r}}f(X_{i})
\biggr) \biggr] \leq\exp\bigl(\lambda^{2}c^{2}(1+
\alpha)^{2}r|\mathbb{G}_{r}| \bigr)\times\exp\biggl(\lambda
c\biggl(\frac{\sqrt{2}}{2}\biggr)^{r}|\mathbb{G}_{r}|
\biggr).
\]
\end{itemize}
We then consider three cases:\vspace*{1pt}

(a) If $\alpha^{2}<\frac{1}{2}$, then $\frac{1-(2\alpha
^{2})^{r}}{1-2\alpha^{2}}<\frac{1}{1-2\alpha^{2}}$ for all $r$.
Taking $\lambda=\frac{(1-2\alpha^2)\delta}{2c^{2}(1+\alpha)^{2}}$ in
(\ref{chernofineq}) leads us to
\[
\mathbb{P} \bigl(\overline{M}_{\mathbb{G}_{r}}(f)>\delta\bigr) \leq
\exp
\biggl(- \biggl(\frac{(1-2\alpha^2)\delta^{2}}{4c^{2}(1 +
\alpha)^{2}}-\alpha^{r}\frac{(1-2\alpha^2)\delta}{2c(1 +
\alpha)^{2}}
\biggr)|\mathbb{G}_{r}| \biggr).
\]
\begin{itemize}
\item If $\alpha\leq\frac{1}{2}$, then $(2\alpha)^{r}\leq1$ for
all $r\in\N$. We then have for all $r\in\N$,
\[
\mathbb{P} \bigl(\overline{M}_{\mathbb{G}_{r}}(f)>\delta\bigr) \leq
\exp\biggl(
\frac{(1-2\alpha^2)\delta}{2c(1+\alpha)^{2}} \biggr)\exp\biggl(- \frac
{(1-2\alpha^2)\delta^{2}|\G_{r}|}{4c^{2}(1+\alpha)^{2}} \biggr).
\]

\item If $\frac{1}{2}<\alpha<\frac{\sqrt{2}}{2}$, then for all
$r\in
\N$ such that $r > \log(\frac{\delta}{4c} )/\log\alpha
$, we
have $(\delta- 2c\alpha^{r}) > \frac{\delta}{2}$, and it then
follows that
\[
\mathbb{P} \bigl(\overline{M}_{\mathbb{G}_{r}}(f)>\delta\bigr) \leq
\exp
\biggl(- \frac{(1-2\alpha^2)\delta^{2}|\G_{r}|}{8c^{2}(1+\alpha)^{2}}
\biggr).
\]
\end{itemize}

(b) If $\alpha^{2}=\frac{1}{2}$, then for all $\lambda>0$,
\begin{eqnarray*}
\P\bigl(\overline{M}_{\G_{r}}(f) > \delta\bigr) &\leq&\exp\bigl(
\bigl(-\delta\lambda+ c^{2}(1+\alpha)^{2}r
\lambda^{2} \bigr)|\G_{r}| \bigr)\\
&&{}\times\exp\biggl(\lambda
c \biggl(\frac{\sqrt{2}}{2} \biggr)^{r}|\G_{r}| \biggr).
\end{eqnarray*}
Taking $\lambda= \frac{\delta}{2c^{2}(1+\alpha)^{2}r}$, we are led
to
\[
\P\bigl(\overline{M}_{\G_{r}}(f)>\delta\bigr) \leq\exp\biggl(-
\frac{\delta|\G_{r}|}{4c^{2}(1+\alpha)^{2}r} \biggl(\delta- 2c \biggl
(\frac{\sqrt{2}}{2}
\biggr)^{r} \biggr) \biggr).
\]
For all $r\in\N$ such that $r>
\log(\frac{\delta}{4c} )/\log(\frac{\sqrt{2}}{2} )$, we have
$(\delta-
2c (\frac{\sqrt{2}}{2} )^{r}) > \frac{\delta}{2}$ and for
such $r$, it follows that
\[
\P\bigl(\overline{M}_{\G_{r}}(f) > \delta\bigr) \leq\exp\biggl(-
\frac{\delta^{2} |\G_{r}|}{18c^{2}r} \biggr).
\]

(c) If $\alpha^{2}>\frac{1}{2}$, then for all $\lambda>0$,
\begin{eqnarray*}
\mathbb{P} \bigl(\overline{M}_{\mathbb{G}_{r}}(f)>\delta\bigr) &\leq&
\exp\bigl(-
\lambda\delta|\mathbb{G}_{r}| \bigr)\times\exp\biggl(\lambda^{2}c^{2}(1+
\alpha)^{2}\frac{(2\alpha^{2})^{r}-1} {
2\alpha^{2}-1}|\mathbb{G}_{r}| \biggr)\\
&&{}\times
\exp\bigl(\lambda c\alpha^{r}|\mathbb{G}_{r}| \bigr)
\\
&\leq& \exp\biggl(-|\mathbb{G}_{r}| \biggl(\lambda\delta-
\frac{\lambda^{2}c^{2}(1
+ \alpha)^{2}}{2\alpha^{2}-1}\bigl(2\alpha^{2}\bigr)^{r} \biggr)
\biggr)\\
&&{} \times\exp\bigl(\lambda c\alpha^{r}|\mathbb{G}_{r}|
\bigr).
\end{eqnarray*}
Taking
$\lambda=\frac{(2\alpha^{2}-1)\delta}{2c^{2}(1+\alpha)^{2}(2\alpha
^{2})^{r}}$
leads us to
\[
\mathbb{P} \bigl(\overline{M}_{\mathbb{G}_{r}}(f)>\delta\bigr) \leq
\exp
\biggl(-\frac{(2\alpha^{2}-1)\delta}{4c^{2}(1+\alpha
)^{2}\alpha^{2r}} \bigl(\delta- 2c\alpha^{r} \bigr) \biggr).
\]
Now for all $r\in\N$ such that $r>
\log(\frac{\delta}{4c} )/\log\alpha$, we have
\[
\P\bigl(\overline{M}_{\G_{r}}(f) \bigr) \leq\exp\biggl(-
\frac{(2\alpha^{2}-1)\delta^{2}}{8c^{2}(1+\alpha)^{2}\alpha
^{2r}} \biggr).
\]

\textit{Part} 2. Let us now deal with $\overline{M}_{\mathbb
{T}_{r}}(f)$. We have for all $\lambda>0$ and all $\delta>0$,
%
\begin{equation}
\label{chernofineq2} \mathbb{P} \bigl(\overline{M}_{\mathbb
{T}_{r}}(f)>\delta
\bigr) \leq\exp\bigl(-\lambda\delta|\mathbb{T}_{r}| \bigr)\mathbb{E} \biggl[
\exp\biggl(\lambda\sum_{i\in
\mathbb{T}_{r}}f(X_{i})
\biggr) \biggr].
\end{equation}
By subtracting and adding terms, we get
\begin{eqnarray*}
&& \mathbb{E} \biggl[\exp\biggl(\lambda\sum_{i\in
\mathbb{T}_{r}}f(X_{i})
\biggr) \biggr]
\\
&&\qquad =\mathbb{E}\biggl[\mathbb{E}\biggl[ \prod_{i\in
\mathbb{G}_{r-1}}
\exp\bigl(\lambda\bigl(f(X_{2i})+f(X_{2i+1})-2Qf(X_{i})
\bigr) \bigr)\\
&&\hspace*{22.6pt}\qquad\quad{}\times\prod_{i\in
\mathbb{G}_{r-1}}\exp\bigl(2\lambda
Qf(X_{i}) \bigr)
\times\prod_{i\in
\mathbb{T}_{r-1}}\exp\bigl(\lambda
f(X_{i}) \bigr) /\mathcal{F}_{r-1}\biggr]\biggr]
\\
&&\qquad = \mathbb{E}\biggl[\mathbb{E}\biggl[ \prod_{i\in\mathbb{G}_{r-1}}
\exp\bigl(\lambda\bigl(f(X_{2i})+f(X_{2i+1})-2Qf(X_{i})
\bigr) \bigr) \\
&&\hspace*{22.6pt}\qquad\quad{}\times\prod_{i\in
\mathbb{G}_{r-1}}\exp\bigl(
\lambda(f+2Qf) (X_{i}) \bigr)
\times\prod_{i\in
\mathbb{T}_{r-2}}\exp\bigl(\lambda
f(X_{i}) \bigr) /\mathcal{F}_{r-1}\biggr]\biggr].
\end{eqnarray*}
The fact that conditionally to the $(r-1)$ first generations the
sequence $\{\Delta_{i}, i\in\mathbb{G}_{r-1}\}$ is a sequence of
independent random variables and Azuma--Bennett--Hoeffding inequality
(see Lemma \ref{lemABH}) lead us according to (H2)~to
\begin{eqnarray*}
&&
\mathbb{E} \biggl[\exp\biggl(\lambda\sum_{i\in
\mathbb{T}_{r}}f(X_{i})
\biggr) \biggr] \\
&&\qquad\leq \exp\bigl(2\lambda^{2}c^{2}(1+
\alpha)^{2}|\mathbb{G}_{r-1}| \bigr)
\\
&&\qquad\quad{}\times\mathbb{E} \biggl[ \prod_{i\in
\mathbb{G}_{r-1}}\exp\bigl(
\lambda(f+2 Qf) (X_{i}) \bigr)\prod_{i\in\mathbb{T}_{r-2}}
\exp\bigl(\lambda f(X_{i}) \bigr) \biggr].
\end{eqnarray*}
Doing the same things for
\[
\mathbb{E} \biggl[ \prod_{i\in
\mathbb{G}_{r-1}}\exp\bigl(
\lambda(f+2 Qf) (X_{i}) \bigr)\prod_{i\in\mathbb{T}_{r-2}}
\exp\bigl(\lambda f(X_{i}) \bigr) \biggr]
\]
with $f+2Qf$ replacing $f$, we get
\begin{eqnarray*}
&&
\mathbb{E} \biggl[\exp\biggl(\lambda\sum_{i\in
\mathbb{T}_{r}}f(X_{i})
\biggr) \biggr] \\
&&\qquad\leq \exp\bigl(2\lambda^{2}c^{2}(1+
\alpha)^{2}|\mathbb{G}_{r-1}| \bigr)\times\exp\bigl(2
\lambda^{2}c^{2}\bigl(1+3\alpha+2\alpha^{2}
\bigr)^{2}|\mathbb{G}_{r-2}| \bigr)
\\
&&\qquad\quad{}\times\mathbb{E} \biggl[ \prod_{i\in
\mathbb{G}_{r-2}}\exp\bigl(
\lambda\bigl(f+2 Qf+2^{2}Q^{2}f\bigr) (X_{i})
\bigr)\prod_{i\in
\mathbb{T}_{r-3}}\exp\bigl(\lambda
f(X_{i}) \bigr) \biggr].
\end{eqnarray*}
Iterating this procedure leads us to
\begin{eqnarray*}
&&
\mathbb{E} \biggl[\exp\biggl(\lambda\sum_{i\in
\mathbb{T}_{r}}f(X_{i})
\biggr) \biggr] \\
&&\qquad\leq \exp\Biggl(2\lambda^{2}c^{2}(1+
\alpha)^{2}\sum_{q=1}^{r}
\Biggl(\sum_{k=0}^{q-1} (2\alpha
)^{k} \Biggr)^{2}|\mathbb{G}_{r-q}| \Biggr)
\\
&&\qquad\quad{}\times\mathbb{E} \bigl[\exp\bigl(\lambda\bigl(f+2Qf+2^{2}Q^{2}f+
\cdots+2^{r}Q^{r}f \bigr) (X_{1}) \bigr) \bigr].
\end{eqnarray*}
Using (H2) we get
\begin{eqnarray*}
&&
\mathbb{E} \biggl[\exp\biggl(\lambda\sum_{i\in
\mathbb{T}_{r}}f(X_{i})
\biggr) \biggr] \\
&&\qquad\leq\exp\Biggl(\lambda c\sum_{k=0}^r(2
\alpha)^k+2\lambda^{2}c^{2}(1+
\alpha)^{2}\sum_{q=1}^{r}
\Biggl(\sum_{k=0}^{q-1} (2\alpha
)^{k} \Biggr)^{2}|\mathbb{G}_{r-q}| \Biggr).
\end{eqnarray*}
Now for $\alpha\neq\frac{1}{2}$ and $\alpha^{2}\neq\frac{1}{2}$
we have
\begin{eqnarray*}
&&\mathbb{P} \bigl(\overline{M}_{\mathbb{T}_{r}}(f)>\delta\bigr)
\\
&&\qquad \leq\exp\bigl(-\lambda\delta|\mathbb{T}_{r}| \bigr) \exp\biggl(2
\lambda^{2}c^{2}(1+\alpha)^{2} \biggl(
\frac
{2^{r}-1}{(1-2\alpha)^{2}}-\frac{\alpha(1-\alpha^{r})2^{r+1}} {
(1-2\alpha)^{2}(1-\alpha)}\\
&&\qquad\quad\hspace*{194.5pt}{}+\frac{2\alpha^{2}(1-(2\alpha
^{2})^{r})2^{r}}{(1-2\alpha)^{2}(1-2\alpha^{2})} \biggr) \biggr)
\\
&&\qquad\quad{}\times\exp\biggl(\lambda c\frac{1-(2\alpha)^{r+1}}{1-2\alpha}
\biggr)
\\
&&\qquad\leq\exp\biggl(-|\mathbb{T}_{r}| \biggl(\lambda\delta-
\frac{\lambda
^{2}c^{2}(1+\alpha)^{2}}{(1-2\alpha)^{2}} \biggl(1+ \frac{4\alpha
^{2}(1-(2\alpha^{2})^{r})}{1-2\alpha^{2}} \biggr) \biggr) \biggr)\\
&&\qquad\quad{}\times\exp
\biggl(\lambda c\frac{1-(2\alpha)^{r+1}}{1-2\alpha} \biggr).
\end{eqnarray*}
Taking
$\lambda=\frac{\delta}{({2c^{2}(1+\alpha)^{2}}/{(1-2\alpha
)^{2}}) (1+{4\alpha^{2}
(1-(2\alpha^{2})^{r})}/({1-2\alpha^{2}}) )}$ leads us to
\begin{eqnarray*}
&&
\mathbb{P} \bigl(\overline{M}_{\mathbb{T}_{r}}(f)>\delta\bigr) \\
&&\qquad\leq
\exp
\biggl(-|\mathbb{T}_{r}|\frac{(1-2\alpha)^{2}\delta
^{2}}{4c^{2}(1+\alpha)^{2} (1+{4\alpha^{2}
(1-(2\alpha^{2})^{r})}/({1-2\alpha^{2}}) )} \biggr)
\\
&&\qquad\quad{}\times\exp\biggl(\frac{(1-2\alpha)^{2}\delta}{2c(1+\alpha)^{2}
(1+{4\alpha^{2}
(1-(2\alpha^{2})^{r})}/({1-2\alpha^{2}}) )}\frac{1-(2\alpha
)^{r+1}}{1-2\alpha} \biggr).
\end{eqnarray*}
\begin{itemize}
\item If $\alpha<\frac{1}{2}$, then $\frac{1-(2\alpha
^{2})^{r}}{1-2\alpha^{2}}<\frac{1}{1-2\alpha^{2}}$ for all $r\in\N$,
\begin{eqnarray*}
\mathbb{P} \bigl(\overline{M}_{\mathbb{T}_{r}}(f)>\delta\bigr) &\leq&
\exp\biggl(
\frac{1-2\alpha}{2c(1+\alpha)^{2}}\delta\biggr)\\
&&{}\times\exp\biggl
(-\frac{(1-2\alpha^2)(1-2\alpha)^{2}\delta
^{2}}{4c^{2}(1+\alpha)^{2}(1+2\alpha^{2})} |
\mathbb{T}_{r}| \biggr).
\end{eqnarray*}

\item If $\frac{1}{2}<\alpha<\frac{\sqrt{2}}{2}$,
then $\frac{1-(2\alpha^{2})^{r}}{1-2\alpha^{2}}<\frac{1}{1-2\alpha
^{2}}$ for all $r\in\N$,
\begin{eqnarray*}
&&
\P\bigl(\overline{M}_{\T_{r}}(f)>\delta\bigr) \\
&&\qquad\leq\exp\biggl(-
\frac{(1-2\alpha^2)(2\alpha-1)^{2}\delta|\T
_{r}|}{4c^{2}(1+\alpha)^{2}(1+2\alpha^{2})} \biggl(\delta- \frac
{2c(1-2\alpha^2)\alpha^{r+1}}{(2\alpha-1)(1+2\alpha
^{2})} \biggr) \biggr).
\end{eqnarray*}
Now for all $r\in\N$ such that $r+1>
\log(\frac{(2\alpha-1)(1+2\alpha^{2})\delta}{4c(1-2\alpha
^2)} )/\log\alpha$,
we have $
\delta- \frac{2c(1-2\alpha^2)\alpha^{r+1}}{(2\alpha-1)(1+2\alpha
^{2})} >
\frac{\delta}{2}$ so that for such $r$, we have
\[
\P\bigl(\overline{M}_{\T_{r}}(f)>\delta\bigr) \leq\exp\biggl(-
\frac{(1-2\alpha^2)(2\alpha-1)^{2}\delta^{2}|\T
_{r}|}{8c^{2}(1+\alpha)^{2}(1+2\alpha^{2})} \biggr).
\]
\item If $\alpha^{2}>\frac{1}{2}$, then for all $r\geq1$, we have
\begin{eqnarray*}
&&
\P\bigl(\overline{M}_{\T_{r}}(f)>\delta\bigr) \\
&&\qquad\leq\exp\biggl(-
\frac{(2\alpha-1)^{2}(2\alpha^{2}-1)\delta}{32c^{2}(1+\alpha
)^{2}\alpha^{2(r+1)}} \biggl(\delta- \frac{16\alpha^{2}c\alpha
^{r+1}}{(2\alpha^{2}-1)(2\alpha-1)} \biggr) \biggr).
\end{eqnarray*}
For all $r\in\N^{*}$ such that $r+3>
\log(\frac{(2\alpha^{2}-1)(2\alpha-1)\delta}{32c}
)/\log\alpha$,
we have $
\delta-
\frac{16\alpha^{2}c\alpha^{r+1}}{(2\alpha^{2}-1)(2\alpha-1)}>\frac
{\delta}{2}$
so that
\[
\mathbb{P} \bigl(\overline{M}_{\mathbb{T}_{r}}(f)>\delta\bigr) \leq
\exp
\biggl(-\frac{(1-2\alpha)^{2}(2\alpha^{2}-1)\delta^{2}}{64c^{2}(1
+ \alpha)^{2}} \biggl(\frac{1}{\alpha^{2}} \biggr)^{r+1}
\biggr).
\]
\end{itemize}
Now if $\alpha=\frac{1}{2}$, then $\sum_{q=1}^{r}\frac
{q^{2}}{2^{q}}< \sum_{q=1}^{\infty}\frac{q^{2}}{2^{q}}=6$.
Then for all $\lambda>0$,
\[
\mathbb{P} \bigl(\overline{M}_{\mathbb{T}_{r}}(f)>\delta\bigr) \leq
\exp\bigl(-
\bigl(\lambda\delta-27c^{2}\lambda^{2}\bigr)|\mathbb
{T}_{r}| \bigr)\times\exp\bigl(\lambda c(r+1) \bigr).
\]
Taking $\lambda=\frac{\delta}{54c^{2}}$ leads us to
\[
\mathbb{P} \bigl(\overline{M}_{\mathbb{T}_{r}}(f)>\delta\bigr) \leq
\exp
\biggl(-\frac{\delta^{2}}{108c^{2}}|\mathbb{T}_{r}| \biggr)\times\exp
\biggl(
\frac{\delta}{54c}(r+1) \biggr).
\]
Finally, if $\alpha^{2}=\frac{1}{2}$, in the same way as previously,
for all $r\in\N$ such that $ $ $r+1>\log
(\frac{(\sqrt{2}-1)\delta}{4c} )/\log(\frac
{\sqrt{2}}{2} )$,
we have
\[
\P\bigl(\overline{M}_{\T_{r}}(f)>\delta\bigr) \leq\exp\biggl(-
\frac{(\sqrt{2}-1)^{2}\delta^{2}}{4c^{2}(1+\sqrt{2})^{2}}\frac{|\T
_{r}|}{r+1} \biggr).
\]
%

\textit{Part} 3. Eventually, let us look at
$\overline{M}{}^{\Pi}_{n}(f)$. We have for all $\delta>0$
\[
\mathbb{P} \biggl(\frac{1}{n}M_{n}^{\Pi}(f)>\delta
\biggr)\leq\mathbb{P} \biggl(\frac{1}{n}\sum_{i\in\T_{r_{n}-1}}f
(X_{i} )>\frac{\delta}{2} \biggr) + \mathbb{P} \Biggl(
\frac{1}{n}\sum_{i=2^{r_{n}}}^{n}f
(X_{\Pi(i)} )>\frac{\delta}{2} \Biggr).
\]
On the one hand, (\ref{expoineq2}) leads us to
%
\begin{equation}
\label{h}\qquad \mathbb{P} \biggl(\frac{1}{n}\sum_{i\in\T_{r_{n}-1}}
f (X_{i} ) > \frac{\delta}{2} \biggr) \leq\cases{\displaystyle  \exp
\bigl(c''\delta\bigr)\exp\bigl(-c'
\delta^{2}n \bigr),\cr
\qquad\forall n\in\N, \hspace*{24.6pt}\qquad \mbox{if $\displaystyle\alpha<\frac{1}{2}$},
\cr
\displaystyle
\exp\bigl(2c'\delta(r_{n}+1) \bigr)\exp
\bigl(-c'\delta^{2}n \bigr),\cr
\qquad\forall n\in\N, \hspace*{24.6pt}\qquad \mbox{if  $
\displaystyle\alpha=\frac{1}{2}$},
\cr
\displaystyle
\exp\bigl(-c'\delta^{2}n
\bigr),\cr
\qquad\forall r_{n}>r_0, \hspace*{18.45pt}\qquad \mbox{if $\displaystyle\frac{1}{2}<
\alpha<\frac{\sqrt{2}}{2}$},
\cr
\displaystyle
\exp\biggl(-c'\delta^{2}
\frac{n}{r_{n}+1} \biggr),\cr
\qquad\forall r_{n}>r_0, \hspace*{18.45pt}\qquad \mbox{if $\displaystyle
\alpha=\frac{\sqrt{2}}{2}$},
\cr
\displaystyle
\exp\biggl(-c'\delta^{2}
\frac{1}{\alpha^{2(r_n+1)}} \biggr),\cr
\qquad\forall r_{n} >r_0-2, \qquad \mbox{if $\displaystyle
\alpha>\frac{\sqrt{2}}{2}$},}
\end{equation}
where $r_0:=\log(\frac{\delta}{c_{0}} )/\log\alpha$
and $c_{0}$, $c'$ and $c''$ are positive constants which depend on
$\alpha$, $\|f\|_{\infty}$ and $c$. $c_{0}$, $c'$and $c''$ differ
line by line. On the other hand, for all $\lambda>0$,
\[
\mathbb{P} \Biggl(\frac{1}{n}\sum_{i=2^{r_{n}}}^{n}f
(X_{\Pi(i)} )>\frac{\delta}{2} \Biggr)\leq\exp\biggl(-
\frac{\lambda\delta}{2}n \biggr)\mathbb{E} \Biggl[\exp\Biggl(\lambda
\sum
_{i=2^{r_{n}}}^{n}f (X_{\Pi(i)} ) \Biggr) \Biggr].
\]
Now let:
\begin{itemize}
\item$\mathcal{O}_{r_{n}}=\{\Pi(2^{r_{n}}),\Pi(2^{r_{n}}+1),\ldots,\Pi(n)\}$;\vspace*{1pt}
\item$\mathcal{O}_{r_{n}-1}^{1}$ the set\vspace*{1pt} of individuals of generation
$\mathbb{G}_{r_{n}-1}$
which are ancestors of one individual in
$\mathcal{O}_{r_{n}}$;\vspace*{1pt}
\item$\mathcal{O}_{r_{n}-1}^{2}$ the set\vspace*{1pt} of individuals of generation
$\mathbb{G}_{r_{n}-1}$ which are ancestors of two individuals in
$\mathcal{O}_{r_{n}}$;

\item$\mathcal{O}_{r_{n}}^{\prime }$ the set of individuals of $\mathcal
{O}_{r_{n}}$ whose parents belong to
$\mathcal{O}_{r_{n}-1}^{1}$;

\item$\mathcal{O}_{r_{n}-1}=\mathcal{O}_{r_{n}-1}^{1}\cup\mathcal
{O}_{r_{n}-1}^{2}$.
\end{itemize}
We introduce the filtration
$\tilde{\mathcal{F}_r}:=\sigma(\mathcal{F}_r,\Pi(i),1\le i\le\T)$.
Then we have
\begin{eqnarray*}
&&
\mathbb{E} \Biggl[\exp\Biggl(\lambda\sum_{i=2^{r_{n}}}^{n}f
(X_{\Pi(i)} ) \Biggr) \Biggr] \\
&&\qquad= \mathbb{E}\biggl[\exp\biggl(\lambda
\sum
_{i\in\mathcal
{O}_{r_{n}-1}^{2}} 2Qf (X_{i} )+\lambda\sum
_{i\in\mathcal
{O}_{r_{n}-1}^{1}}Qf (X_{i} ) \biggr)
\\
&&\qquad\quad{}\times\mathbb{E} \biggl[\exp\biggl(\lambda\sum
_{i\in\mathcal
{O}_{r_{n}}^{\prime}}f (X_{i} )- Qf (X_{[{i}/{2}]} ) \biggr)
\Big/\tilde{\mathcal{F}}_{r_{n}-1} \biggr]
\\
&&\qquad\quad{}\times\mathbb{E} \biggl[\exp\biggl(\lambda\sum
_{i\in\mathcal
{O}_{r_{n}-1}^{2}}f (X_{2i} )+ f (X_{2i+1} )- 2Qf
(X_{i} ) \biggr) \Big/\tilde{\mathcal{F}}_{r_{n}-1} \biggr]\biggr].
\end{eqnarray*}
Using the Azuma--Bennett--Hoeffding inequality, as in part 1, we
get
\[
\mathbb{E} \biggl[\exp\biggl(\lambda\sum_{i\in\mathcal
{O}_{r_{n}}^{\prime}}f
(X_{i} )- Qf (X_{[{i}/{2}]} ) \biggr)\Big/\tilde{\mathcal
{F}}_{r_{n}-1} \biggr]\leq\exp\biggl(\frac{\lambda^{2}c^{2}(1+\alpha
)^{2}}{2}\bigl|\mathcal
{O}_{r_{n}}^{\prime}\bigr| \biggr)
\]
and
\begin{eqnarray*}
&&\mathbb{E} \biggl[\exp\biggl(\lambda\sum_{i\in\mathcal
{O}_{r_{n}-1}^{2}}f
(X_{2i} )+ f (X_{2i+1} )- 2Qf (X_{i} ) \biggr)\Big/
\tilde{\mathcal{F}}_{r_{n}-1} \biggr] \\
&&\qquad\leq\exp\bigl(2\lambda^{2}c^{2}(1+
\alpha)^{2}\bigl|\mathcal{O}_{r_{n}-1}^{2}\bigr| \bigr).
\end{eqnarray*}
Now, we have
\begin{eqnarray*}
&& \exp\biggl(\frac{\lambda^{2}c^{2}(1+\alpha)^{2}}{2}\bigl| \mathcal
{O}_{r_{n}}^{\prime}\bigr|
\biggr) +\exp\bigl(2\lambda^{2}c^{2}(1+
\alpha)^{2}\bigl|\mathcal{O}_{r_{n}-1}^{2}\bigr| \bigr)
\\
&&\qquad= \exp\biggl(\lambda^{2}c^{2}(1+\alpha)^{2}
\biggl(2\bigl|\mathcal{O}_{r_{n}-1}^{2}\bigr|+\frac{|\mathcal{O}_{r_{n}}^{\prime}|} {
2} \biggr)
\biggr)
\\
&&\qquad\leq\exp\bigl(\lambda^{2}c^{2}(1+\alpha)^{2}n
\bigr).
\end{eqnarray*}
This leads us to
\begin{eqnarray*}
&& \mathbb{E} \Biggl[\exp\Biggl(\lambda\sum_{i=2^{r_{n}}}^{n}f
(X_{\Pi(i)} ) \Biggr) \Biggr]
\\
&&\qquad \leq\exp\bigl(\lambda^{2}c^{2}(1+
\alpha)^{2}n \bigr)\mathbb{E} \biggl[\exp\biggl(\lambda\sum
_{i\in\mathcal{O}_{r_{n}-1}^{2}} 2Qf (X_{i} )+\lambda\sum
_{i\in\mathcal
{O}_{r_{n}-1}^{1}}Qf (X_{i} ) \biggr) \biggr].
\end{eqnarray*}
Now let:
\begin{itemize}
\item$\mathcal{O}_{r_{n}-2}^{1,1}$ the set of individuals of
$\mathbb{G}_{r_{n}-2}$ which are ancestors of one individual in
$\mathcal{O}_{r_{n}-1}$ and one individual in
$\mathcal{O}_{r_{n}}$;\vspace*{1pt}

\item$\mathcal{O}_{r_{n}-2}^{1,2}$ the set of individuals of
$\mathbb{G}_{r_{n}-2}$ which are ancestors of one individual in
$\mathcal{O}_{r_{n}-1}$ and two individuals in
$\mathcal{O}_{r_{n}}$;\vspace*{1pt}

\item$\mathcal{O}_{r_{n}-2}^{2,2}$ the set of individuals of
$\mathbb{G}_{r_{n}-2}$ which are ancestors of two individuals in
$\mathcal{O}_{r_{n}-1}$ and two individuals in
$\mathcal{O}_{r_{n}}$;\vspace*{1pt}

\item$\mathcal{O}_{r_{n}-2}^{2,3}$ the set of individuals of
$\mathbb{G}_{r_{n}-2}$ which are ancestors of two individuals in
$\mathcal{O}_{r_{n}-1}$ and three individuals in
$\mathcal{O}_{r_{n}}$;\vspace*{1pt}

\item$\mathcal{O}_{r_{n}-2}^{2,4}$ the set of individuals of
$\mathbb{G}_{r_{n}-2}$ which are ancestors of two individuals in
$\mathcal{O}_{r_{n}-1}$ and four individuals in
$\mathcal{O}_{r_{n}}$;\vspace*{1pt}

\item$\mathcal{O}_{r_{n}-1}^{\prime}$ the set of individuals of
$\mathcal{O}_{r_{n}-1}$ whose parents belong to
$\mathcal{O}_{r_{n}-2}^{1,1}$;

\item$\mathcal{O}_{r_{n}-1}^{\prime \prime}$ the set of individuals of
$\mathcal{O}_{r_{n}-1}$ whose parents belong to
$\mathcal{O}_{r_{n}-2}^{1,2}$.
\end{itemize}
Then we have
\begin{eqnarray*}
&&
\mathbb{E} \biggl[\exp\biggl(\lambda\sum_{i\in\mathcal
{O}_{r_{n}-1}^{2}} 2Qf
(X_{i} )+\lambda\sum_{i\in\mathcal
{O}_{r_{n}-1}^{1}}Qf
(X_{i} ) \biggr) \biggr]\\
&&\qquad= \mathbb{E} [I_1\times
I_2\times I_3\times I_4\times
I_5 \times I_6 \times I_7 ],
\end{eqnarray*}
where
\begin{eqnarray*}
I_1 &=& \exp\biggl(\lambda\sum_{i\in\mathcal
{O}_{r_{n}-2}^{1,1}}Q^{2}f(X_{i})
+ \lambda\sum_{i\in\mathcal{O}_{r_{n}-2}^{1,2}}2Q^{2}f(X_{i})
+ \lambda\sum_{i\in\mathcal{O}_{r_{n}-2}^{2,2}}2Q^{2}f(X_{i})
\\
&&\hspace*{105.6pt}{} + \lambda\sum_{i\in\mathcal{O}_{r_{n}-2}^{2,3}}3Q^{2}f(X_{i})
+ \lambda\sum_{i\in\mathcal
{O}_{r_{n}-2}^{2,4}}4Q^{2}f(X_{i})
\biggr),
\\
I_2&=&\mathbb{E} \biggl[\exp\biggl(\lambda\sum
_{i\in\mathcal
{O}_{r_{n}-1}^{\prime}} Qf(X_{i})-Q^{2}f(X_{[{i}/{2}]})
\biggr) \Big/\tilde{\mathcal{F}}_{r_{n}-2} \biggr],
\\
I_3&=&\mathbb{E} \biggl[\exp\biggl(2\lambda\sum
_{i\in
\mathcal{O}_{r_{n}-1}^{\prime \prime}} Qf(X_{i})-Q^{2}f(X_{[{i}/{2}]})
\biggr) \Big/\tilde{\mathcal{F}}_{r_{n}-2} \biggr],
\\
I_4&=&\mathbb{E} \biggl[\exp\biggl(\lambda\sum
_{i\in\mathcal
{O}_{r_{n}-1}^{2,2}} Qf(X_{2i})+Qf(X_{2i+1})-2Q^{2}f(X_{i})
\biggr) \Big/\tilde{\mathcal{F}}_{r_{n}-2} \biggr],
\\
I_5&=&\mathbb{E} \biggl[\exp\biggl(\frac{\lambda}{2}\sum
_{i\in\mathcal{O}_{r_{n}-1}^{2,3}} 2Qf(X_{2i})+Qf(X_{2i+1})-3Q^{2}f(X_{i})
\biggr) \Big/\tilde{\mathcal{F}}_{r_{n}-2} \biggr],
\\
I_6&=&\mathbb{E} \biggl[\exp\biggl(\frac{\lambda}{2}\sum
_{i\in\mathcal{O}_{r_{n}-1}^{2,3}} Qf(X_{2i})+2Qf(X_{2i+1})-3Q^{2}f(X_{i})
\biggr) \Big/\tilde{\mathcal{F}}_{r_{n}-2} \biggr],
\\
I_7&=&\mathbb{E} \biggl[\exp\biggl(\lambda\sum
_{i\in\mathcal
{O}_{r_{n}-1}^{2,4}} 2Qf(X_{2i})+2Qf(X_{2i+1})-4Q^{2}f(X_{i})
\biggr) \Big/\tilde{\mathcal{F}}_{r_{n}-2} \biggr].
\end{eqnarray*}
Using the Azuma--Bennett--Hoeffding inequality, we get
\begin{eqnarray*}
&& I_2\times I_3\times I_4\times
I_5 \times I_6 \times I_7
\\
&&\qquad \leq\exp\biggl(\lambda^{2}c^{2}\bigl(\alpha+
\alpha^{2}\bigr)^{2} \biggl(\frac
{|\mathcal{O}_{r_{n}-1}^{\prime }|}{2}+ 2\bigl|
\mathcal{O}_{r_{n}-1}^{\prime \prime}\bigr|+2\bigl|\mathcal{O}_{r_{n}-1}^{2,2}\bigr|\\
&&\qquad\quad\hspace*{127.4pt}{}+
\frac
{9|\mathcal{O}_{r_{n}-1}^{2,3}|}{2} + 8\bigl|\mathcal{O}_{r_{n}-1}^{2,4}\bigr|
\biggr) \biggr)
\\
&&\qquad\leq\exp\bigl(2\lambda^{2}c^{2}\bigl(\alpha+
\alpha^{2}\bigr)^{2}n \bigr),
\end{eqnarray*}
hence
\[
\mathbb{E} \Biggl[\exp\Biggl(\lambda\sum_{i=2^{r_{n}}}^{n}f
(X_{\Pi(i)} ) \Biggr) \Biggr]\leq\exp\bigl(\lambda^{2}c^{2}(1+
\alpha)^{2}n \bigr) \exp\bigl(2\lambda^{2}c^{2}
\bigl(\alpha+\alpha^{2}\bigr)^{2}n \bigr) \mathbb{E}
[I_1 ].
\]
Now, iterating this procedure we get
\[
\mathbb{E} \Biggl[\exp\Biggl(\lambda\sum_{i=2^{r_{n}}}^{n}f
(X_{\Pi(i)} ) \Biggr) \Biggr]\leq\exp\Biggl(\lambda^{2}c^{2}(1+
\alpha)^{2}n\sum_{p=0}^{r_{n}}
\bigl(2\alpha^{2}\bigr)^{p} \Biggr)\exp\bigl(\lambda c
\alpha^{r_{n}}n \bigr).
\]
Then it follows as in part 1 that
%
\begin{eqnarray}
\label{hh}
&&\mathbb{P} \Biggl(\frac{1}{n}\sum_{i=2^{r_{n}}}^{n}f
(X_{\Pi(i)} )>\frac{\delta}{2} \Biggr)\nonumber\\[-8pt]\\[-8pt]
&&\qquad\leq\cases{\displaystyle  \exp
\bigl(c''\delta\bigr)\exp\bigl(-c'
\delta^{2}n \bigr),\cr
\qquad\forall n\in\N, &\quad if $\displaystyle\alpha\leq
\frac{1}{2}$,
\cr
\displaystyle
\exp\bigl(-c'\delta^{2}n
\bigr),\cr
\qquad\forall n \in\N\mbox{ such that }r_{n}>r_0, &\quad if $\displaystyle
\frac{1}{2}<\alpha<\frac{\sqrt{2}}{2}$,
\cr
\displaystyle
\exp\biggl(-c'
\delta^{2}\frac{n}{r_{n}} \biggr), \cr
\qquad\forall n\in\N\mbox{ such that }
r_{n}>r_0, &\quad if $\displaystyle\alpha^{2}=
\frac{1}{2}$,
\cr
\displaystyle
\exp\biggl(-c'\delta^{2}
\biggl(\frac{1}{\alpha} \biggr)^{2r_{n}} \biggr), \cr
\qquad\forall n \in\N
\mbox{ such that }r_{n}>r_0, &\quad if $\displaystyle\alpha^{2}>
\frac{1}{2}$,}\nonumber
\end{eqnarray}
where $r_0:=\log(\frac{\delta}{c_{0}}) /\log(\alpha)$ and the
positive constants $c_{0}$, $c'$ and $c''$ depend on
$\alpha$, $\delta$, $c$ and differ line to line. Finally (\ref{h})
and (\ref{hh}) lead us to (\ref{expoineq3}).

\subsection{\texorpdfstring{Proof of Theorem \protect\ref{expoprobaineq2}}
{Proof of Theorem 3.2}} \label{proofexpoprobaineq2}
Let $f\in\mathcal{B}_b (S^{3} )$ such that $(\mu, Pf)=0$.

\textit{Part} 1. Let us first deal with $\overline{M}_{\mathbb
{G}_{r}}(f)$. We have for all $\delta>0$ and $\lambda>0$,
\[
\mathbb{P} \bigl(\overline{M}_{\mathbb{G}_{r}}(f)>\delta\bigr)\leq
\exp\bigl(-
\lambda\delta|\mathbb{G}_{r}| \bigr) \mathbb{E} \biggl[\exp\biggl(\lambda
\sum_{i\in\mathbb
{G}_{r}}f(\Delta_{i}) \biggr) \biggr].
\]
Conditioning and using Bennett--Hoeffding inequality gives us
\[
\mathbb{E} \biggl[\exp\biggl(\lambda\sum_{i\in\mathbb
{G}_{r}}f(
\Delta_{i}) \biggr) \biggr] \leq\exp\bigl(2\lambda^{2}\|f
\|_{\infty}|\mathbb{G}_{r}| \bigr)\mathbb{E} \biggl[\exp\biggl(
\lambda\sum_{i\in\mathbb{G}_{r}}Pf(X_{i}) \biggr)
\biggr].
\]
Now, applying part 1 of the proof of the Theorem
\ref{expoprobaineq1} to $Pf$, we get (\ref{expoineq1}) for $f\in
\mathcal{B}_b (S^{3} )$.\vspace*{9pt}

\textit{Part} 2. Let us now treat $\overline{M}_{\mathbb
{T}_{r}}(f)$. We have for all $\delta>0$,
%
\begin{equation}
\label{chernofineq3}\quad  \mathbb{P} \bigl(\overline{M}_{\mathbb
{T}_{r}}(f)>\delta
\bigr)\leq\mathbb{P} \biggl(\overline{M}_{\mathbb{T}_{r}}(f-Pf)>
\frac{\delta
}{2} \biggr) + \mathbb{P} \biggl(\overline{M}_{\mathbb{T}_{r}}(Pf)>
\frac{\delta
}{2} \biggr).
\end{equation}
Now, since $ (M_{n}^{\Pi}(f-Pf) )_{n\geq1}$ is a
$\mathcal{H}_{n}$-martingale with bounded jumps, the Azuma inequality
\cite{Azuma} gives us for some positive constant $c'$,
\[
\mathbb{P} \biggl(\overline{M}_{\mathbb{T}_{r}}(f-Pf)>\frac{\delta
}{2} \biggr)
\leq\exp\bigl(-c'\delta^{2}|\mathbb{T}_{r}|
\bigr).
\]
For the second term on the right-hand side of (\ref{chernofineq3}),
we use inequalities (\ref{expoineq2}) with $Pf$ instead of $f$.
Gathering these inequalities, we get (\ref{expoineq2}) for all $r$
large enough.\vspace*{9pt}

\textit{Part} 3. The proof for the case $\overline{M}{}^{\Pi}_{n}(f)$
follows the same lines as the proof of part 2.

\subsection{\texorpdfstring{Proof of Proposition \protect\ref{devineqtheta}}
{Proof of Proposition 4.2}}\label{proofdevineqtheta}
We will prove the deviation inequality for
$|\hat{\alpha}_{0}^{r}-\alpha_0|$. The other deviation inequalities for
$|\hat{\beta}_{0}^{r}-\beta_0|,|\hat{\alpha}_{1}^{r}-\alpha_1|$ and
$|\hat{\beta}_{1}^{r}-\beta_1|$ may be treated in a similar way.

One easily checks that
\[
\hat{\alpha}_{0}^{r} - \alpha_{0} =
\frac{ (\overline{M}_{\T_{r}}(\mathbf{xy}) -
\overline{M}_{\T_{r}}(P(\mathbf{xy})) ) -
(\overline{M}_{\T_{r}}(\mathbf{x}) ) (\overline
{M}_{\T_{r}}(\mathbf{y})
- \overline{M}_{\T_{r}}(P(\mathbf{y})) )}{B_{r}}.
\]
We then have, for all $\delta>0$,
\begin{eqnarray*}
&&
\mathbb{P} \bigl(\bigl\llvert\hat{\alpha}_{0}^{r}-
\alpha_{0}\bigr\rrvert>\delta\bigr)\\
&&\qquad\leq\mathbb{P} \biggl(
\frac{\llvert\overline{M}_{\mathbb{T}_{r}}(\mathbf
{xy}-P(\mathbf{xy}))\rrvert}{B_{r}} >\frac{\delta}{2} \biggr)\\
&&\qquad\quad{} + \mathbb
{P} \biggl(
\frac{\llvert\overline
{M}_{\mathbb{T}_{r}}(\mathbf{x})\rrvert
\llvert\overline{M}_{\mathbb{T}_{r}}
(\mathbf{y}-P(\mathbf{y}))\rrvert}{B_{r}} >\frac{\delta}{2}
\biggr).\nonumber
\end{eqnarray*}
On one hand, for all $\gamma_{1}>0$ we have
%
\begin{eqnarray}
\label{i1}
&&\mathbb{P} \biggl(\frac{\llvert\overline{M}_{\mathbb
{T}_{r}}(\mathbf
{xy}-P(\mathbf{xy}))\rrvert}{B_{r}} >\frac{\delta}{2} \biggr)\nonumber\\[-8pt]\\[-8pt]
&&\qquad\leq
\mathbb{P} (B_{r}<\gamma_{1} ) + \P\biggl(\bigl\llvert
\overline{M}_{\mathbb{T}_{r}} \bigl(\mathbf{xy} - P(\mathbf{xy})\bigr
)\bigr\rrvert
>\frac{\delta\gamma_{1}}{2} \biggr).\nonumber
\end{eqnarray}
Now, for $b=\mu_{2}(\theta,\sigma^{2})-\mu_{1}(\theta)^{2}$, where
$\mu_1$ and $\mu_2$ are given in (\ref{mu}), we have
\begin{eqnarray*}
\mathbb{P} (B_{r}<\gamma_{1} )&\leq& \mathbb{P} \biggl(-
\overline{M}_{\mathbb{T}_{r}}\bigl(\mathbf{x}^{2}-\mu_{2}
\bigr) > \frac{b-\gamma_{1}}{3} \biggr) \\
&&{}+ \mathbb{P} \biggl(\bigl
\llvert
\overline{M}_{\mathbb{T}_{r}}(\mathbf{x} - \mu_{1})\bigr\rrvert>
\frac{\sqrt{b - \gamma_{1}}}{\sqrt{3} } \biggr)
\\
&&{} + \mathbb{P} \biggl(\overline{M}_{\mathbb{T}_{r}}(\mathbf{x}-
\mu_{1}) > \frac{b - \gamma_{1}}{6|\mu_{1}|} \biggr).
\end{eqnarray*}
We choose $\gamma_{1} < \min\{\frac{2b}{2+3\delta}, \frac{-4 +
\sqrt{48b\delta^{2}+16}}{6\delta^{2}},
\frac{b}{1+3\delta|\mu_{1}|} \}$ so that
$\frac{\delta\gamma_{1}}{2} < \max\{\frac{b-\gamma_{1}}{3},\allowbreak
\frac{\sqrt{b - \gamma_{1}}}{\sqrt{3}}, \frac{b -
\gamma_{1}}{6|\mu_{1}|} \}$. Then we have
\[
\P(B_{r} < \gamma_{1} ) \leq\P\biggl(
\overline{M}_{\T_{r}}\bigl(\mu_{2} - \mathbf{x}^{2}
\bigr) > \frac{\delta\gamma_{1}}{2} \biggr) + 2\P\biggl(\bigl\llvert
\overline{M}_{\T_{r}}(\mathbf{x} - \mu_{1})\bigr\rrvert>
\frac{\delta\gamma_{1}}{2} \biggr),
\]
and therefore we get
\begin{eqnarray*}
&&
\P\biggl(\frac{\llvert\overline{M}_{\mathbb{T}_{r}}(\mathbf{xy} -
P(\mathbf{xy}))\rrvert}{B_{r}} > \frac{\delta}{2} \biggr) \\
&&\qquad\leq2\P
\biggl(\bigl
\llvert\overline{M}_{\T
_{r}}(\mathbf{x} - \mu_{1})\bigr\rrvert
> \frac{\delta\gamma_{1}}{2} \biggr) + \P\biggl(\overline{M}_{\T
_{r}}\bigl(
\mu_{2} - \mathbf{x}^{2}\bigr) > \frac{\delta\gamma_{1}}{2} \biggr)
\\
&&\qquad\quad{} + \P\biggl(\bigl\llvert\overline{M}_{\mathbb{T}_{r}}
\bigl(
\mathbf{xy} - P(\mathbf{xy})\bigr)\bigr\rrvert>\frac{\delta\gamma
_{1}}{2} \biggr).
\end{eqnarray*}
On the other hand, we have
\begin{eqnarray*}
\P\biggl(\frac{\llvert\overline{M}_{\mathbb{T}_{r}}(\mathbf
{x})\rrvert
\llvert\overline{M}_{\mathbb{T}_{r}}(\mathbf{y}-P(\mathbf
{y}))\rrvert}{B_{r}} > \frac{\delta}{2} \biggr) &\leq&\P\biggl(
\frac{\llvert\overline{M}_{\T_{r}}(\mathbf{x} -
\mu_{1})\rrvert \llvert\overline{M}_{\T_{r}}(\mathbf{y} -
P(\mathbf{y}))\rrvert}{B_{r}} > \frac{\delta}{4} \biggr)
\\
&&{}+ \P\biggl(\frac{\llvert\overline{M}_{\T_{r}}(\mathbf{y} -
P(\mathbf{y}))\rrvert}{B_{r}} > \frac{\delta}{4|\mu_{1}|} \biggr).
\end{eqnarray*}
The last term of the previous inequality can be dealt with in the
same way as inequality (\ref{i1}), using $\gamma_{3}>0$ such that
\[
\gamma_{3} < \min\biggl\{\frac{4b|\mu_{1}|}{4|\mu_{1}|+3\delta},
\frac{2|\mu_{1}| (-4+\sqrt{{24b\delta^{2}}/{|\mu
_{1}|}+16} )}{3\delta^{2}},
\frac{2b}{2+3\delta} \biggr\}.
\]
For the second term, we have
\begin{eqnarray*}
&&
\P\biggl(\frac{\llvert\overline{M}_{\T_{r}}(\mathbf{x} -
\mu_{1})\rrvert \llvert\overline{M}_{\T_{r}}(\mathbf{y} -
P(\mathbf{y}))\rrvert}{B_{r}} > \frac{\delta}{4} \biggr) \\
&&\qquad\leq\P\biggl
(\bigl
\llvert\overline{M}_{\T_{r}}(\mathbf{x} - \mu_{1})\bigr\rrvert
> \frac{\sqrt{\delta}}{2} \biggr)
+ \P\biggl(\frac{\llvert\overline{M}_{\T_{r}}(\mathbf{y} -
P(\mathbf{y}))\rrvert}{B_{r}} > \frac{\sqrt{\delta}}{2} \biggr).
\end{eqnarray*}
Let $\gamma_{2}>0$ such that $ \gamma_{2} <
\min\{\frac{2b}{2+3\sqrt{\delta}},
\frac{-4+\sqrt{48b\delta+16}}{b\delta},
\frac{b}{1+3\sqrt{\delta}|\mu_{1}|} \}$, in such a way that we
obtain $ \frac{\gamma_{2}\sqrt{\delta}}{2} <
\max\{\frac{b-\gamma_{2}}{3},
\frac{\sqrt{b-\gamma_{2}}}{\sqrt{3}},
\frac{b-\gamma_{2}}{6|\mu_{1}|} \}. $ We thus have
\begin{eqnarray*}
&&
\P\biggl(\frac{\llvert\overline{M}_{\T_{r}}(\mathbf{x} -
\mu_{1})\rrvert \llvert\overline{M}_{\T_{r}}(\mathbf{y} -
P(\mathbf{y}))\rrvert}{B_{r}} > \frac{\delta}{4} \biggr) \\
&&\qquad\leq\P\biggl
(\bigl
\llvert\overline{M}_{\T_{r}}(\mathbf{x} - \mu_{1})\bigr\rrvert
> \frac{\sqrt{\delta}}{2} \biggr)
\\
&&\qquad\quad{}+ \P\biggl(\bigl\llvert\overline{M}_{\T_{r}}\bigl(\mathbf{x}^{2}
- \mu_{2}\bigr)\bigr\rrvert> \frac{\gamma_{2}\sqrt{\delta}}{2} \biggr
) + \P
\biggl(\bigl\llvert\overline{M}_{\T_{r}}\bigl(\mathbf{y} - P(\mathbf{y})
\bigr)\bigr\rrvert> \frac{\gamma_{2}\sqrt{\delta}}{2} \biggr)
\\
&&\qquad\quad{}+ 2\P\biggl(\bigl\llvert\overline{M}_{\T_{r}}(\mathbf{x}-
\mu_{1})\bigr\rrvert> \frac{\gamma_{2}\sqrt{\delta}}{2} \biggr).
\end{eqnarray*}
From the foregoing, we deduce that for all $\gamma>0$ such that
$\gamma< \min(\gamma_{1}, \gamma_{2}, \gamma_{3})$,
\begin{eqnarray*}
&&
\P\bigl(\bigl|\hat{\alpha}_{0}^{(r)}-\alpha_0\bigr|>
\delta\bigr) \\
&&\qquad\leq 2\P\biggl(\bigl\llvert\overline{M}_{\T_{r}}(\mathbf{x}
- \mu_{1})\bigr\rrvert> \frac{\delta\gamma}{2} \biggr) + \P\biggl(
\overline{M}_{\T_{r}}\bigl(\mu_{2} - \mathbf{x}^{2}
\bigr) > \frac{\delta\gamma}{2} \biggr)
\\
&&\qquad\quad{} + \P\biggl(\bigl\llvert\overline{M}_{\mathbb{T}_{r}} \bigl
(\mathbf{xy} -
P(\mathbf{xy})\bigr)\bigr\rrvert>\frac{\delta\gamma}{2} \biggr) + \P
\biggl(\bigl
\llvert\overline{M}_{\T_{r}}(\mathbf{x} - \mu_{1})\bigr\rrvert
> \frac{\sqrt{\delta}}{2} \biggr)
\\
&&\qquad\quad{} + \P\biggl(\bigl\llvert\overline{M}_{\T_{r}}\bigl(
\mathbf{x}^{2} - \mu_{2}\bigr)\bigr\rrvert>
\frac{\gamma\sqrt{\delta}}{2} \biggr) + \P\biggl(\bigl\llvert\overline
{M}_{\T_{r}}
\bigl(\mathbf{y} - P(\mathbf{y})\bigr)\bigr\rrvert> \frac{\gamma\sqrt
{\delta}}{2} \biggr)
\\
&&\qquad\quad{} + 2\P\biggl(\bigl\llvert\overline{M}_{\T_{r}}(\mathbf{x}-
\mu_{1})\bigr\rrvert> \frac{\gamma\sqrt{\delta}}{2} \biggr) + 2\P
\biggl(\bigl
\llvert\overline{M}_{\T_{r}}(\mathbf{x} - \mu_{1})\bigr\rrvert
> \frac{\delta\gamma}{4|\mu_{1}|} \biggr)
\\
&&\qquad\quad{} + \P\biggl(\biggl\llvert\overline{M}_{\T_{r}}\bigl(
\mu_{2} - \mathbf{x}^{2}\bigr) > \frac{\delta\gamma}{4|\mu_{1}|}\biggr
\rrvert\biggr) + \P\biggl(\bigl\llvert\overline{M}_{\mathbb{T}_{r}}
\bigl(
\mathbf{y} - P(\mathbf{y})\bigr)\bigr\rrvert>\frac{\delta\gamma}{4|\mu
_{1}|} \biggr).
\end{eqnarray*}
Now, using (\ref{4ordercontrolT}) and Markov's inequality we get
%
\begin{eqnarray*}
\mathbb{P} \biggl(\bigl\llvert\overline{M}_{\mathbb{T}_{r}} \bigl
(\mathbf{xy}-P(
\mathbf{xy})\bigr)\bigr\rrvert>\frac{\delta\gamma}{2} \biggr)&\leq&
\frac{c}{\delta^{4}\gamma^{4}} \biggl(\frac{1}{4} \biggr)^{r+1},
\\
\mathbb{P} \biggl(\bigl\llvert\overline{M}_{\mathbb{T}_{r}} \bigl
(\mathbf{y}-P(
\mathbf{y})\bigr)\bigr\rrvert>\frac{\delta
\gamma}{4|\mu_{1}|} \biggr)&\leq&
\frac{c\mu_{1}^{4}}{\delta^{4}\gamma^{4}} \biggl(\frac{1}{4} \biggr)^{r+1}
\end{eqnarray*}
and
\[
\mathbb{P} \biggl(\bigl\llvert\overline{M}_{\mathbb{T}_{r}} \bigl
(\mathbf{y}-P(
\mathbf{y})\bigr)\bigr\rrvert>\frac{\gamma\sqrt{\delta
}}{2} \biggr)\leq\frac{c}{\delta^{2}\gamma^{4}}
\biggl(\frac{1}{4} \biggr)^{r+1},
\]
where the constant $c$ can be found as in Remark \ref{remcontrolT}.

Finally, the other terms, that is, the terms related to
$\overline{M}_{\mathbb{T}_{r}}(\mathbf{x}^{2}-\mu_{2})$ and
$\overline{M}_{\mathbb{T}_{r}}(\mathbf{x}-\mu_{1})$, can be bounded
as in Corollary \ref{4ordermomentcontrol2} and this completes the proof.

\section{}\label{appendixB}
Let us gather here, for the convenience of the readers, various
theorems useful to establish LIL, ASFCLT, deviation inequalities and
MDP.

First, let us enunciate the Azuma--Bennett--Hoeffding inequality
\cite{Azuma,Bennett,Hoeffding}.

\begin{lemma}\label{lemABH}
Let $X$ be a real-valued and centered random variable such that
$ a\leq X\leq b$ a.s., with $a<b$. Then for all
$\lambda>0$, we have
\[
\E\bigl[\exp(\lambda X ) \bigr] \leq\exp\biggl(\frac{\lambda
^{2}(b-a)^{2}}{8} \biggr).
\]
\end{lemma}

\begin{lemma}\label{exponentialmapping}
Let $(E,d)$ a metric space. Let $(Z_{n})$ a sequence of random
variables values in $E$, $(v_{n})$ a rate and
$g\dvtx \mathcal{D}_{E}\subset E \rightarrow\mathbb{R}$ continuous. Let
$z\in E$ be a deterministic value:
\[
\mbox{If } Z_n\stackrel{\mathrm{superexp}}{\underset{v_n}{\Longrightarrow}}
z\qquad \mbox{then } g(Z_n) \stackrel{\mathrm{superexp}}{\underset{v_n}
{\Longrightarrow}} g(z).
\]
\end{lemma}

\begin{pf}
For all $\delta>0$, there exists (see, e.g., \cite{VanDerVaart},
proof of Theorem 2.3) $\alpha_{0}(\delta)>0$
%
\begin{equation}
\label{inevan} \mathbb{P} \bigl( \bigl|g(Z_{n})-g(z) \bigr|>
\delta\bigr)\leq\mathbb{P} \bigl(d(Z_{n},z)>\alpha_{0}(
\delta) \bigr).
\end{equation}
Indeed, since $g$ is continuous, for all $\delta>0$, there exists
$\alpha_{0}(\delta) >0$ such that
\[
\bigl|g(x)-g(z)\bigr|\leq\delta\qquad\mbox{whenever } d(x,z)\leq\alpha_{0}(\delta).
\]
We then have
\[
\bigl\{\omega\dvtx  d\bigl(Z_{n}(\omega),z\bigr)\leq\alpha_{0}(
\delta) \bigr\} \subset\bigl\{\omega\dvtx  \bigl|g\bigl(Z_{n}(\omega)
\bigr)-g(z)\bigr|\leq\delta\bigr\}
\]
and therefore inequality (\ref{inevan}). Now, the result of the lemma
follows since $Z_n\stackrel{\mathrm{superexp}}{\underset{v_n}{\Longrightarrow}} z$.
\end{pf}

Let $M=(M_{n}, \mathcal{H}_{n}, n\geq0)$ be a centered square
integrable martingale defined on a probability space $(\Omega,
\mathcal{H}, \mathbb{P})$ and $(\langle M\rangle_{n})$ its bracket.
We recall some limit theorems for martingale used intensively in
this paper.

We recall the following result due to W. F. Stout (Theorem 3 in \cite{Stout}).

\begin{theorem} \label{thmStoutLIL} Let $(M_n)$ such that $M_{0}=0$.
If $\langle M\rangle_{n} \rightarrow\infty$
a.s. and
\begin{eqnarray*}
&&\sum_{n=1}^{\infty}\frac{2\log\log\langle M\rangle
_{n}}{K_{n}^{2}\langle M\rangle_{n}}
\mathbb{E} \bigl[ (M_{n}-M_{n-1} )^{2}
\mathbf{1}_{ \{
(M_{n}-M_{n-1})^{2}>{K_{n}^{2}\langle
M_{n}\rangle}/({2\log\log\langle M\rangle_{n}}) \}} / \mathcal{H}_{n-1}
\bigr]\\
&&\qquad<\infty \qquad
\mbox{a.s.},
\end{eqnarray*}
where $K_{n}$ are $\mathcal{H}_{n-1}$ measurable and
$K_{n}\rightarrow0$ a.s., then\break  $\limsup\frac{M_{n}}{\sqrt{2\langle
M\rangle_{n}\log\log\langle M\rangle_{n}}}=1$ a.s.
\end{theorem}
We recall the following result due to Chaabane (Corollary 2.2 in \cite
{Chaabane}).

\begin{theorem}\label{thmchaabanetlcpsf} Let $(V_{n})$ be a
$(\mathcal{H}_{n})$-predictable
increasing process such that:
\begin{longlist}[H-3]
\item[H-1] $V_{n}^{-2}\langle M\rangle_{n}\underset{n\rightarrow
\infty}{\longrightarrow} 1$, a.s.;

\item[H-2] for all $\varepsilon>0$, $\sum_{n\geq
1}V_{n}^{-2}\mathbb{E} [
(M_{n}-M_{n-1} )^{2}\mathbf
{1}_{|M_{n}-M_{n-1}|>\varepsilon
V_{n}} /\mathcal{H}_{n-1} ]<\infty$, a.s.;

\item[H-3] for some $a>1$, $\sum_{n\geq 1}V_{n}^{-2a}\mathbb{E}
[(M_{n}-M_{n-1} )^{2a}\mathbf{1}_{|M_{n}-M_{n-1}|\leq V_{n}}
/\mathcal{H}_{n-1} ]<\infty$, a.s.
\end{longlist}
Then $M_n$ satisfies an ASFCLT; that is, for almost all $\omega$, the
weighted random measures
\[
W_{N}(\omega, \bullet)=\bigl(\log V_{N}^{2}
\bigr)^{-1}\sum_{n=1}^{N}
\biggl(1-\frac{V_{n}^{2}}{V_{n+1}^{2}} \biggr) \delta_{\{\psi_{n}(\omega
)\in\bullet\}}
\]
associated to the
continuous processes $\Psi_{n}(\omega)=\{\Psi_{n}(\omega,t),0\leq
t\leq1\}$ defined by
\[
\Psi_{n}(\omega,t)=V_{n}^{-1}\bigl\{
M_{k}+\bigl(V_{k+1}^{2}-V_{k}^{2}
\bigr)^{-1}\bigl(tV_{n}^{2}-V_{k}^{2}
\bigr) (M_{k+1}-M_{k})\bigr\},
\]
when $V_{k}^{2}\leq tV_{n}^{2}<V_{k+1}^{2}$, $0\leq k\leq n-1$,
weakly converge to the Wiener measure on
$\mathcal{C}([0,1],\mathbb{R})$.
\end{theorem}

Let us enunciate the following which corresponds to the
unidimensional case of Theorem 1 in \cite{Djellout}.

\begin{prop}\label{propmdp} Let $(b_{n})$ a sequence satisfying
\[
b_{n} \mbox{ is increasing},\qquad \frac{b_{n}}{\sqrt{n}}\longrightarrow+
\infty,\qquad \frac{b_{n}}{n}\longrightarrow0,
\]
such that $c(n):=n/b_{n}$ is nondecreasing, and define the
reciprocal function $c^{-1}(t)$ by
\[
c^{-1}(t):=\inf\bigl\{n\in\mathbb{N}\dvtx  c(n)\geq t\bigr\}.
\]
Under the following conditions:
\begin{enumerate}[(C3)]
\item[(C1)] there exists $Q\in\mathbb{R}_{+}^{*}$ such that
$ \frac{\langle M\rangle_{n}}{n}\underset{
{b^2_n}/{n}}{\stackrel{\mathrm{superexp}}{\longrightarrow}} Q$;
\item[(C2)] $ \limsup_{n\rightarrow
+\infty}\frac{n}{b_{n}^{2}}\log(n \mathop{\operatorname{ess}\operatorname{sup}}
_{1\leq k\leq
c^{-1}(b_{n+1})}
\mathbb{P}(|M_{k}-M_{k-1}|>b_{n}/\break \mathcal{H}_{k-1}) )=-\infty$;
\item[(C3)] for all $ a>0$ $ \frac{1}{n}\sum_{k=1}^{n}\mathbb
{E} (|M_{k}-M_{k-1}|^{2}
\mathbf{1}_{\{|M_{k}-M_{k-1}|\geq
a{n}/{b_{n}}\}}/\break \mathcal{H}_{k-1} )\underset{
{b^2_n}/{n}}{\stackrel{\mathrm{superexp}}{\longrightarrow}} 0$;
\end{enumerate}
$ (M_{n}/b_{n} )_{n\in\N}$ satisfies the MDP in $\mathbb
{R}$ with the
speed $b_{n}^{2}/n$ and the rate function $ I(x) =\frac
{x^{2}}{2Q}$.
\end{prop}
\end{appendix}

\section*{Acknowledgments}

Let us thank two anonymous referees for their very careful reading and
useful suggestions, which have clearly improved both presentation and
mathematical rigor of the present paper.



\printaddresses

\end{document}